
\documentclass[onecolumn,final,a4paper,sort&compress]{elsarticle}

\makeatletter
\def\ps@pprintTitle{%
\let\@oddhead\@empty
\let\@evenhead\@empty
\def\@oddfoot{}%
\let\@evenfoot\@oddfoot}
\makeatother



\usepackage{comment}
\usepackage{amsmath}       
\usepackage{amssymb}       
\usepackage{chapterbib}    
\usepackage{color}
\usepackage{comment}
\usepackage{bm,bbm}
\usepackage{latexsym}
\usepackage{overpic}
\usepackage{times}
\usepackage{epsfig}
\usepackage{graphicx,graphics,rotating}
\usepackage{subfigure}
\usepackage{multicol}
\usepackage{multirow}



\newtheorem{theorem}{Theorem}[section]
\newtheorem{lemma}[theorem]{Lemma}

\newtheorem{proposition}[theorem]{Proposition}

\newtheorem{remark}[theorem]{Remark}

\newtheorem{assumption}[theorem]{Assumption}


\definecolor{MyDarkGreen}{rgb}{0,0.45,0}

\newcommand{\BLUE}[1]{{#1}}
\newcommand{\RED} [1]{{#1}}

\def\trait #1 #2 #3 {\vrule width #1pt height #2pt depth #3pt}
\def\fin{\hfill
        \trait .3 5 0
        \trait 5 .3 0
        \kern-5pt
        \trait 5 5 -4.7
        \trait 0.3 5 0
\medskip}

\newenvironment{proof}{ \textit{Proof.} }{\fin}


\newcommand{\TERM} [1]{\mbox{$\mathbf{(#1)}$}}

\newcommand{\DOFS}[1]{($\mathbf{#1}$)}



\newcommand{\INTP}{\footnotesize{I}}

\newcommand{\REAL}{\mathbbm{R}}

\DeclareMathOperator{\dims}{\text{dim}\hspace{0.25mm}}
\DeclareMathOperator{\eqdef}{:=}

\newcommand{\restrict}[2]{{#1}_{|{#2}}}


\newcommand{\PGRAPH}[1]{\medskip\noindent\textbf{#1}.}
\newcommand{\EOD}{\end{document}}

\newcommand{\TLD}[1]{\widetilde{#1}}
\newcommand{\BAR}[1]{\overline{#1}}


\newcommand{\mv}{\mathbf{m}}

\newcommand{\uv}{\mathbf{u}}
\newcommand{\vv}{\mathbf{v}}

\newcommand{\xv}{\mathbf{x}}
\newcommand{\yv}{\mathbf{y}}

\newcommand{\Cv}{\mathbf{C}}

\newcommand{\Fv}{\mathbf{F}}

\newcommand{\Hv}{\mathbf{H}}

\newcommand{\Vv}{\mathbf{V}}


\newcommand{\as}{a}

\newcommand{\fs}{f}
\newcommand{\gs}{g}
\newcommand{\hs}{h}

\newcommand{\ks}{k}

\newcommand{\ms}{m}

\newcommand{\ps}{p}
\newcommand{\qs}{q}
\newcommand{\rs}{r}

\newcommand{\us}{u}
\newcommand{\vs}{v}
\newcommand{\ws}{w}
\newcommand{\xs}{x}
\newcommand{\ys}{y}

\newcommand{\Bs}{B}
\newcommand{\Cs}{C}
\newcommand{\Ds}{D}

\newcommand{\Fs}{F}

\newcommand{\Ss}{S}

\newcommand{\Vs}{V}

\newcommand{\astwo}{\calA_2}
\newcommand{\asone}{\calA_1}
\newcommand{\aszer}{\calA_0}
\newcommand{\astwoP}{\calA_2^{\P}}
\newcommand{\asoneP}{\calA_1^{\P}}
\newcommand{\aszerP}{\calA_0^{\P}} 
\newcommand{\Vszr}{V_{0}}


\newcommand{\matD}{\mathsf{D}}

\newcommand{\matI}{\mathsf{I}}


\newcommand{\calA}{\mathcal{A}}
\newcommand{\calB}{\mathcal{B}}

\newcommand{\calD}{\mathcal{D}}
\newcommand{\calE}{\mathcal{E}}
\newcommand{\calF}{\mathcal{F}}

\newcommand{\calL}{\mathcal{L}}

\newcommand{\calV}{\mathcal{V}}

\newcommand{\calDt}{\widetilde{\mathcal{D}}}
\newcommand{\calAP} {\mathcal{A}^{\P}}

\newcommand{\calBP}{\mathcal{B}^{\P}}



\newcommand{\HONE}  {H^1}

\newcommand{\LTWO}  {L^2}

\newcommand{\LS}[1] {L^{#1}}
\newcommand{\HS}[1] {H^{#1}}

\newcommand{\CS}[1] {C^{#1}}

\newcommand{\HSzr}[1] {H^{#1}_0}

\newcommand{\PS}[1] {\mathbbm{P}_{#1}}



\newcommand{\Bsh}[1]{\Bs_{#1}^{h}}
\newcommand{\Vsh}[1]{\Vs_{#1}^{h}}

\newcommand{\Vsht}[1]{\widetilde{\Vs}_{#1}^{h}}

\newcommand{\Vvhpp}[1]{\Vv^{\FT,h}_{k}}

\newcommand{\FT}{\textit{F2}}


\renewcommand{\P} {\textrm{E}}            
\newcommand  {\E} {\textrm{e}}
\newcommand  {\V} {\textrm{v}}            





\newcommand{\hh}{h}
\newcommand{\Th}{\Omega_{\hh}}

\newcommand{\xvP}{\xv_{\P}}        
\newcommand{\xvE}{\xv_{\E}}        
\newcommand{\xvV}{\xv_{\V}}        


\newcommand{\hP}{\hh_{\P}}

\newcommand{\hE}{\hh_{\E}}
\newcommand{\hV}{\hh_{\V}}

\newcommand{\mP}{\ABS{\P}}

\newcommand{\Eset}{\mathcal{E}}    
\newcommand{\Vset}{\mathcal{V}}    

\newcommand{\NMB}{N}

\newcommand{\NPV}{\NMB^{\Vset}_{\P}}      
\newcommand{\NPE}{\NMB^{\Eset}_{\P}}      



\newcommand{\dS}{\,ds}
\newcommand{\dxv}{\,d\xv}






\newcommand{\norP} {\mathbf{n}_{\P}}

\newcommand{\norPE}{\mathbf{n}_{\P,\E}}

\newcommand{\tngP} {\mathbf{t}_{\P}}

\newcommand{\tngPE}{\mathbf{t}_{\P,\E}} 


\newcommand{\half}{1\slash{2}}

\newcommand{\TILDE}[1]{\widetilde{#1}}




\newcommand{\bil}[2]{\langle#1,#2\rangle}
\newcommand{\bilP}[2]{\langle#1,#2\rangle_{\P}}


\newcommand{\ABS}   [1]{\left|#1\right|}

\newcommand{\snorm}  [2]{|#1|_{#2}}

\newcommand{\norm}  [2]{||#1||_{#2}}






\newcommand{\Piz}[1]{\Pi^{0}_{#1}}
\newcommand{\PinP}[1]{\Pi^{\nabla,\P}_{#1}}
\newcommand{\PizP}[1]{\Pi^{0,\P}_{#1}}

\newcommand{\PiLP}[1]{\Pi^{\calL,\P}_{#1}}

\newcommand{\matPiLP}[1]{\pmb{\Pi}^{\calL,\P}_{#1}}
\newcommand{\matPizP}[1]{\pmb{\Pi}^{0,\P}_{#1}}
\newcommand{\matPinP}[1]{\pmb{\Pi}^{\nabla,\P}_{#1}}



\newcommand{\SPh}{\Ss^{\P}_{\hh}}

\newcommand{\asP}{\as^{\P}}


\newcommand{\calAh} {\calA_{\hh}}
\newcommand{\calAhP}{\calA_{\hh}^{\P}}

\newcommand{\calBh} {\calB_{\hh}}
\newcommand{\calBhP}{\calB_{\hh}^{\P}}




\newcommand{\ush} {\us_{\hh}}
\newcommand{\usI} {\us_{\INTP}}

\newcommand{\vsh} {\vs_{\hh}}
\newcommand{\vsI} {\vs_{\INTP}}

\newcommand{\vsht}{\TILDE{\vs}_{\hh}}

\newcommand{\fsh} {\fs_{\hh}}



\newcommand{\uvh} {\uv_{\hh}}

\newcommand{\vvh} {\vv_{\hh}}




\newcommand{\xiI}{\xi_{\INTP}}

\newcommand{\ilev}{n}
\newcommand{\nR}   {N_{P}}
\newcommand{\nF}   {N_{F}}
\newcommand{\nV}   {N_{V}}
\newcommand{\ndofs}{\#\textrm{dofs}}
\newcommand{\hmax} {\hh}
\newcommand{\Ndfs} {\#\textbf{dofs}}
\newcommand{\Ndofs} {N^{\textrm{dofs}}}


\usepackage[capitalize]{cleveref}

\newcommand{\etal}{et al.}

\newcommand{\StiffnessMatrix}{\pmb{K}}
\newcommand{\MassMatrix}{\pmb{M}}

\newcommand{\PScard}{n_\ks}
\newcommand{\DOFcard}{N^\text{dofs}}

\newcommand{\MonomialVec}{\pmb{m}}
\newcommand{\VirtualSFVec}{\pmb{\varphi}}

\newcommand{\dofOp}[2]{\text{dof}_{#1}\left( #2 \right)}

\newcommand{\Iden}[1]{I_{#1}}

\newcommand{\transpose}[0]{^\text{\textbf{T}}}

\newcommand{\GMatrix}{\pmb{G}}
\newcommand{\BMatrix}{\pmb{B}}
\newcommand{\DMatrix}{\pmb{D}}

\def\EE{\mathcal{E}}
\def\inte{{\rm int}}
\def\bdry{{\rm bdry}}

\def\EEi{\EE_h^{\inte}}
\def\EEbdry{\EE_h^{\bdry}}
\def\VV{\mathcal{V}}
\def\VVi{\VV_h^{\inte}}
\def\VVbdry{\VV_h^{\bdry}}
\newcommand\bn{\boldsymbol{n}}
\newcommand\bt{\boldsymbol{t}}

\usepackage{xcolor}

\usepackage{soul}               
\usepackage[super]{nth}         


\usepackage[a4paper]{geometry}


\begin{document}
%
\begin{frontmatter} 
\title{A $C^1$-conforming arbitrary-order two-dimensional virtual
  element method for the fourth-order phase-field equation}


  \author[CHILE]  {Dibyendu Adak}
  \author[LANLT5] {Gianmarco Manzini\corref{CORR}}\ead{gmanzini@lanl.gov}
  \author[LANLT3] {Hashem M. Mourad}
  \author[LANLXCP]{JeeYeon N. Plohr}
  \author[LANLT3] {Lampros Svolos}
  
  \cortext[CORR]{Corresponding author}

  \address[CHILE]{GIMNAP, Departamento de Matem\'atica, Universidad del B\'io-B\'io, Concepci\'on, Chile}
  \address[LANLT5]{Applied Mathematics and Plasma Physics, T-5, Theoretical Division,  Los Alamos National Laboratory, Los Alamos, NM, USA}
  \address[LANLT3]{Fluid Dynamics and Solid Mechanics, T-3, Theoretical Division, Los Alamos National Laboratory, Los Alamos, NM, USA}
  \address[LANLXCP]{Materials and Physical Data, XCP-5, Computational Physics Division,  Los Alamos National Laboratory, Los Alamos, NM, USA}

  \begin{abstract}
    We present a two-dimensional conforming virtual element method for
    the fourth-order phase-field equation.
    Our proposed numerical approach to the solution of this high-order
    phase-field (HOPF) equation relies on the design of an
    arbitrary-order accurate, virtual element space with $\CS{1}$
    global regularity.
    Such regularity is guaranteed by taking the values of the virtual
    element functions and their full gradient at the mesh vertices as
    degrees of freedom.
    Attaining high-order accuracy requires also edge polynomial
    moments of the trace of the virtual element functions and their
    normal derivatives.
    In this work, we detail the scheme construction, and prove its
    convergence by deriving error estimates in different norms.
    A set of representative test cases allows us to assess the
    behavior of the method.
  \end{abstract}
  
  \begin{keyword}
    Two-dimensional phase field equation,
    virtual element method,
    error analysis.\\
    \emph{2020 Mathematics Subject Classification: Primary: 65M60, 65N30; Secondary: 65M22.}
  \end{keyword}

\end{frontmatter}


\renewcommand{\arraystretch}{1.}
\raggedbottom




\section{Introduction}
\label{sec1:intro}

Fracture is a critical failure mode that can cause a rapid loss of
load-carrying capacity and uncontrolled demolition of inhabited
structures.
The prevention of such catastrophic outcomes is the motivation to
develop computationally efficient models of fracture, which are also
capable of accurately capturing material behavior in the post-failure
regime.
The efficient solution to fracture problems remains one of the most
critical research priorities despite the progress made over the past
decades.
To this end, developing efficient numerical techniques with predictive
capabilities requires a multidisciplinary approach.

Over the past decade, phase field (PF) modeling of fracture has gained
significant attention due to its ability to capture complicated crack
patterns (e.g., merging or branching) by utilizing conventional finite
element techniques.
The first PF fracture method is traced back to Bourdin
\etal~\cite{bourdin_numerical_2000}, where a numerical implementation
of the variational approach to fracture
\cite{francfort_revisiting_1998} was introduced.
Inspired by Griffith's work, the variational
approach describes fracture as a minimization problem of a total
energy functional, which expresses the competition between bulk
elastic energy and crack surface energy.
By introducing an auxiliary field (denoted by $\us$ in this paper),
crack surfaces are represented by ``diffuse'' entities, obviating the
need for injecting discontinuities into the kinematic solution fields.
In the seminal work of Miehe  \etal~\cite{miehe2010thermodynamically},
a framework based on continuum mechanics and thermodynamic
arguments was proposed to model brittle fracture.
It was further developed to treat dynamic brittle failure in
\cite{borden2012phase}.
Moreover, the PF framework was extended to model cohesive fracture in
\cite{Vignollet-May-DeBorst-Verhoosel:2014} and
ductile fracture in
\cite{miehe_phase_2015}.
More recently, multiphysics coupling effects are studied in problems
of brittle fracture \cite{yan2021new}, cohesive fracture
\cite{rezaei2022anisotropic}, ductile fracture
\cite{dittmann2020phase,svolos2020thermal,svolos2021anisotropic},
and structural fragmentation \cite{moutsanidis2018hyperbolic}.
The interested reader is refereed to
\cite{egger2019discrete,rahimi2022modeling}
for more applications of the PF method and to
\cite{wu_chapter_2020} for an extensive review.

The numerical solution to PF fracture problems may require a highly
refined mesh to accurately resolve the steep PF gradients that develop
in the vicinity of a crack~\cite{wu_chapter_2020}, thus being
expensive and possibly non-competitive with respect to other
approaches despite all its modeling capabilities.
Adopting the popular second-order PF fracture model exacerbates this
issue, as it leads to the development of a cusp in the solution field
at the crack surface that negatively impacts the convergence of the
numerical solution when the mesh is refined.
To address this issue, Borden \etal~\cite{borden_higher-order_2014}
proposed a high-order phase-field (HOPF) model, where the solution
regularity is increased, and better spatial convergence behavior can
be achieved.
The HOPF model involves a trade-off, as it necessitates using
specialized computational techniques that ensure $\CS{1}$-continuity
of the PF unknown to attain convergence.
For example, the Finite Element Method (FEM) with a
$\CS{0}$-continuous Lagrange polynomial basis is not a good choice.
Its use to treat higher-order PF problems results in discontinuities
in the PF gradient.
Thus, it constitutes a variational crime~\cite{Strang:1972}, since the
weak form of such problems includes at least second-order spatial
derivatives, e.g., the Laplacian and the Hessian differential
operators.

High regularity of the numerical approximation is of primary
importance when dealing with high-order differential problems.
In addition, global smoothness can be utilized to directly compute
physical quantities (such as fluxes, strains, and stresses) without
resorting to post-processing as in the $\CS{0}$-FEM.
From the earliest works in the 1960s,
e.g.,~\cite{Argyris-Fried-Scharpf:1968,Bell:1969},
to the most current attempts,
e.g.,~\cite{Zhang:2016,Wu-Lin-Hu:2021}, there
are several examples of finite elements with regularity higher than
$\CS{0}$.
The construction of approximation spaces with such global regularity
has been seen as challenging since they require basis functions with
the same global regularity.
Designing approximations with such enhanced regularity is still an
active research topic.
A non-exhaustive list includes the FEM with Hermite polynomial basis
functions \cite{Stogner-Carey-Murray:2008} or B-spline basis functions
(i.e., isogeometric analysis)
\cite{Bartezzaghi-Dede-Quarterni:2015};
machine learning techniques (e.g. physics informed neural networks
\cite{Goswami-Anitescu-Rabczuk:2020}); fast
Fourier transform (FFT)-based methods
\cite{Ma-Sun:2020}; mixed FEM
\cite{Elliott-French-Milner:1989};
continuous/discontinuous Galerkin (C/DG) methods
\cite{Svolos-Mourad-Manzini-Garikipati:2022}; discontinuous Galerkin
(DG) methods \cite{Georgoulis-Houston:2009}.
Such approximations have a natural application in the numerical
treatment of problems involving high-order differential operators, as
in the HOPF problem.

The Virtual Element Method (VEM) was initially designed as a
Galerkin-type projection method to extend the FEM from
triangular/tetrahedral and quadrilateral/hexahedral meshes to
polytopal meshes.
\BLUE{The VEM does not need explicit knowledge of the basis functions
  spanning the approximation
  spaces~\cite{BeiraodaVeiga-Brezzi-Cangiani-Manzini-Marini-Russo:2013}.
Instead, local approximation spaces are constructed by solving a
partial differential equation at the element level. These local spaces
are subsequently ``glued'' together to form a highly regular,
conforming global approximation space.
The basis functions in this case are referred to as ``virtual''
because they are not computed in closed form. Only the projection of
the basis onto a subspace of polynomials is known, and is utilized in
the method's formulation.}
The conforming VEM was first developed for second-order elliptic
problems in primal
formulation~\cite{BeiraodaVeiga-Brezzi-Cangiani-Manzini-Marini-Russo:2013},
and then 
in nonconforming formulation~\cite{AyusodeDios-Lipnikov-Manzini:2016}.
The first works using a $\CS{1}$-regular conforming VEM addressed the
classical plate bending
problems~\cite{Brezzi-Marini:2013,Chinosi-Marini:2016}, second-order
elliptic
problems~\cite{BeiraodaVeiga-Manzini:2014,BeiraodaVeiga-Manzini:2015},
and the nonlinear Cahn-Hilliard
equation~\cite{Antonietti-BeiraodaVeiga-Scacchi-Verani:2016}.
In~\cite{Antonietti-Manzini-Verani:2019}, a highly-regular conforming
VEM is proposed for the two-dimensional polyharmonic problem
$(-\Delta)^{\ps_1}\us=\fs$, $\ps_1\geq 1$.
The VEM is based on an approximation space that locally contains
polynomials of degree $\rs\geq2\ps_1-1$ and has a global $\HS{\ps_1}$
regularity.
In~\cite{Antonietti-Manzini-Scacchi-Verani:2021}, this formulation was
extended to a virtual element space that can have arbitrary regularity
$\ps_2\geq\ps_1\geq1$ and contains polynomials of degree
$\rs\geq\ps_2$.
Highly-regular conforming VEM in any dimension has been proposed in
\cite{Chen-Huang-Wei:2022}.

\BLUE{HOPF models of dynamic fracture are based on two coupled
  governing equations: the momentum conservation equation and the HOPF
  evolution equation.
  In our recent work
  \cite{Antonietti-Manzini-Mazzieri-Mourad-Verani:2021}, we developed
  a VEM for the momentum equation with linear elastic constitutive
  laws.
  In the present work, we propose a VEM for the other ingredient of
  HOPF fracture models, namely the HOPF evolution equation itself.
  The coupling between these two virtual element models is a
  non-trivial task, and will be the topic of our future work.  }
Specifically, we design a conforming VEM for the fourth-order equation
with Laplace and $\LS{2}$ terms.
The numerical approximation relies on an arbitrary order accurate,
virtual element space with \BLUE{$\HS{2}$ }global regularity.
The degrees of freedom of the lowest-order accurate $\CS{1}$-VEM are
the values of the virtual element functions and their gradients at the
mesh vertices.
Attaining high-order accuracy requires additional edge polynomial
moments of the trace of the virtual element functions and their normal
derivatives.
This choice of the degrees of freedom guarantees the global
\BLUE{$\HS{2}$} regularity.
To avoid the computational complexity of the lower order terms, we
introduce an elliptic projection operator that combines the biharmonic
and Laplace operators.
\BLUE{
The calculation of the elliptic projection in every cell reduces to a
single matrix calculation instead of two, and our approach requires
less computation than the ones proposed in the existing VEM
literature.
Furthermore, the use of a single elliptic projection simplifies the
fixing of the kernel that reduces to the kernel of the Laplacian
operator.
This technique reduces the computational cost significantly for
higher order VEM spaces and, potentially, for higher dimensions.}



The remainder of this paper is organized as follows.
In Section~\ref{sec2:phase-field}, we briefly discuss the strong and
weak formulations of the HOPF model along with the mathematical
arguments proving their well-posedness.
In Section~\ref{sec3:VEMapproximation}, we construct the virtual
element approximation of the HOPF equation through the definition of
the virtual element space and the bilinear forms and linear functional
required by the variational formulation.
In Section~\ref{sec4:Theory}, we conduct a convergence analysis of the
proposed VEM by deriving error estimates of the discrete scheme.
In Section~\ref{sec5:implementation}, we discuss the implementation of
the method and provide details on the discretization.
In Section~\ref{sec6:numerical:results}, we carry out a numerical
investigation about the performance of the proposed method by solving
a manufactured solution problem on a set of representative polygonal
meshes.
In addition, we show that optimal convergence rates are attained for
an example involving a diagonal crack modeled by the phase-field
method.
In Section~\ref{sec6:conclusions}, we offer our final conclusions and
remarks on possible future work.

\subsection{Notation and technicalities}
Throughout this paper, we adopt the notation of Sobolev spaces of
Ref.~\cite{Adams-Fournier:2003}.
Accordingly, we denote the space of square integrable functions
defined on any open, bounded, connected domain $\calD\subset\REAL^2$
with boundary $\partial\calD$ by $\LTWO(\calD)$, and the Hilbert space
of functions in $\LTWO(\calD)$ with all partial derivatives up to a
positive integer $m$ also in $\LTWO(\calD)$ by $\HS{m}(\calD)$,
cf.~\cite{Adams-Fournier:2003}.
We endow $\HS{m}(\calD)$ with a norm and a seminorm that we denote as
$\norm{\,\cdot\,}{m,\calD}$ and $\snorm{\,\cdot\,}{m,\calD}$,
respectively.

\medskip
The virtual element method is formulated on the mesh family
$\big\{\Th\big\}_{h}$, where each mesh $\Th$ is a partition of the
computational domain $\Omega$ into nonoverlapping polygonal elements
$\P$.
A polygonal element $\P$ is a compact subset of $\REAL^2$ with
boundary $\partial\P$, area $\mP$, center of gravity $\xvP$, and
diameter $\hP=\sup_{\xv,\yv\in\P}\vert\xv-\yv\vert$.
The mesh elements of $\Th$ form a finite cover of $\Omega$ such that
$\overline{\Omega}=\cup_{\P\in\Th}\P$ and the mesh size labeling each
mesh $\Th$ is defined by $\hh=\max_{\P\in\Th}\hP$.
A mesh edge $\E$ has center $\xvE$ and length $\hE$; a mesh vertex
$\V$ has position vector $\xvV$.

We denote the set of mesh edges by $\EE_{\hh}$ and the set of mesh
vertices by $\calV_{\hh}$.
We decompose the edge set as $\EE_{\hh}:=\EEi\cup\EEbdry$, where $\EEi$
and $\EEbdry$ are the set of interior and boundary edges.
Similarly, we decompose the vertex set as $\VV_h:=\VVi\cup\VVbdry$,
where $\VVi$ and $\VVbdry$ are the set of interior and boundary
vertices.
For each $\P\in\Th$, we denote by $\norP$ the unit normal vector and
by $\tngP$ the unit tangential vector along the boundary $\partial\P$.
We assume a local orientation of $\partial\P$ so that $\norP$ point
out of $\P$.
Besides, we will use $\norPE$ ad $\tngPE$ to denote the unit normal
and tangential vectors to an edge $\E\in\EE_h$ that are locally
oriented consistently with $\partial\P$ and $\bn_e$ and $\bt_{e}$ the
vectors whose orientation is globally fixed once and for all (and
independent of $\partial\P$)
Moreover, in the definition of the degrees of freedom of the next
section, we also associate every vertex $\V$ with a characteristic
lenght $\hV$, which is the average of the diameters of the polygons
sharing that vertex.

\medskip
For any integer number $\ell\geq0$, we let $\PS{\ell}(\P)$ and
$\PS{\ell}(\E)$ denote the space of polynomials defined on the element
$\P$ and the edge $\E$, respectively; $\PS{\ell}(\Th)$ denotes the
space of piecewise polynomials of degree $\ell$ on the mesh $\Th$.
For convenience of exposition, we also use the notation
$\PS{-2}(\P)=\PS{-1}(\P)=\{0\}$.
Accordingly, it holds that $\restrict{\qs}{\P}\in\PS{\ell}(\P)$ if
$\P\in\Th$ for all $\qs\in\PS{\ell}(\Th)$.
Finally, we define the (broken) seminorm of a function
$\vs\in\prod_{\P\in\Th}\HS{2}(\P)$ by
\begin{align*}
  \norm{\vs}{\hh}^2=\sum_{\P\in\Th}\asP(\vs,\vs).
\end{align*}
Throughout the paper, we use the multi-index notation, so that
$\nu=(\nu_1,\nu_2)$ is a two-dimensional index defined by the two
integer numbers $\nu_1,\nu_2\geq0$.
Moreover,
$\Ds^{\nu}\ws=\partial^{|\nu|}\ws\slash{\partial\xs_1^{\nu_1}\partial\xs_2^{\nu_2}}$
denotes the partial derivative of order $|\nu|=\nu_1+\nu_2>0$ of a
sufficiently regular function $\ws(\xs_1,\xs_2)$, and we use the
conventional notation that $\Ds^{(0,0)}\ws=\ws$ for $\nu=(0,0)$.
We also denote the partial derivatives of $\ws$ versus $x$ and $y$ by
the shortcuts $\partial_{x}\ws$ and $\partial_{y}\ws$, and
the normal and tangential derivatives with respect to a given edge by
$\partial_{n}\ws$ and $\partial_{t}\ws$.




\section{The high-order phase-field model}
\label{sec2:phase-field}


Let $\Omega\subset\REAL^2$ be a simply-connected, open, bounded domain
with polygonal boundary $\Gamma$.
The HOPF model is given by the linear, fourth-order, partial
differential equation problem for the real, scalar unknown $\us$:
\begin{subequations}
  \label{eq:HOPF:strong}
  \begin{align}
    \alpha_2\Delta^2\us - \alpha_1\Delta\us + \alpha_0\us &= \fs\phantom{\gs_0\gs_1} \quad\textrm{in~}\Omega,\label{eq:HOPF:strong:A}\\
    \us                                                  &= \gs_0\phantom{\fs\gs_1} \quad\textrm{on~}\Gamma,\label{eq:HOPF:strong:B}\\
    \partial_n\us                                        &= \gs_1\phantom{\fs\gs_0} \quad\textrm{on~}\Gamma,\label{eq:HOPF:strong:C}
  \end{align}
\end{subequations}
where $\alpha_0$, $\alpha_1$ and $\alpha_2$ are strictly-positive,
real constant coefficients; $\fs\in\LTWO(\Omega)$ is the load term;
$\gs_0\in\HS{\frac32}(\Gamma)$ and $\gs_1\in\HS{\frac12}(\Gamma)$ are
the univariate functions definining the Dirichlet boundary conditions
on $\Gamma$. 

Consider the affine space
$\Vs=\big\{\vs\in\HS{2}(\Omega),\restrict{\vs}{\Gamma}=\gs_{0},\restrict{\partial_{n}\vs}{\Gamma}=\gs_{1}\big\}$,
and its linear subspace $\Vszr=\HSzr{2}(\Omega)$, which we
equivalently define by setting $\gs_0=\gs_1=0$ in $\Vs$.
The weak formulation of problem~\eqref{eq:HOPF:strong} reads as:
\begin{align}
  \mbox{\emph{Find $\us\in\Vszr$ such that~}}\quad \calA(\us,\vs) =
  \Fs(\vs) \quad\forall\vs\in\Vszr,
  \label{eq:varform}
\end{align}
where the bilinear form $\calA:\Vszr\times\Vszr\to\REAL$ and the linear
functional $\Fs:\Vszr\to\REAL$ are defined as follows:
\begin{align}
  \calA(\us,\vs) &:=
  \alpha_2\astwo(\us,\vs) +
  \alpha_1\asone(\us,\vs) +
  \alpha_0\aszer(\us,\vs),
  \label{eq:A:def}
\end{align}
with 
\begin{align}
  \astwo(\us,\vs) :=\displaystyle\int_{\Omega}\Delta\us\Delta\vs\,\dxv,\quad
  \asone(\us,\vs) :=\displaystyle\int_{\Omega}\nabla\us\cdot\nabla\vs\,\dxv,\quad
  \aszer(\us,\vs) :=\displaystyle\int_{\Omega}\us\,\vs\dxv,
  \label{eq:A210:def}
\end{align}
and
\begin{align*}
  \Fs(\vs) := \int_{\Omega}\fs\vs\dxv.
\end{align*}
For the exposition's convenience, we will also use the additional
bilinear form $\calB:\Vszr\times\Vszr\to\REAL$ given by
$\calB(\us,\vs):=\alpha_2\astwo(\us,\vs)+\alpha_1\asone(\us,\vs)$,
which is clearly such that $\calA(\us,\vs) = \calB(\us,\vs) +
\alpha_0 \calA_0\big(\us,\vs\big)$.
The positive sublinear functional
$\norm{\,\cdot\,}{\Vszr}:\Vszr\to\REAL^+$ given by
\begin{align}
  \norm{\vs}{\Vszr}^2 = \int_{\Omega}\big( \ABS{\Delta\vs}^2 + \ABS{\nabla\vs}^2 + \ABS{\vs}^2 \big)\dxv
\end{align}
is a norm on $\Vszr$, and it trivially holds that
$\norm{\vs}{\Vszr}\leq2\norm{\vs}{2,\Omega}$.
The bilinear form $\calA(\cdot,\cdot)$ is coercive with respect to the
norm $\norm{\,\cdot\,}{\Vszr}$.
The coercivity of $\calA(\cdot,\cdot)$ with coercivity constant
$\alpha=\min(\alpha_0,\alpha_1,\alpha_2)$ follows on noting that
\begin{align}
  \calA(\vs,\vs)
  \geq \min(\alpha_0,\alpha_1,\alpha_2)
  \int_{\Omega}\big( \ABS{\Delta\vs}^2 + \ABS{\nabla\vs}^2 + \ABS{\vs}^2 \big)\dxv
  = \min(\alpha_0,\alpha_1,\alpha_2)\norm{\vs}{\Vszr}^2,
  \label{eq:As:coercivity}
\end{align}
since $\min(\alpha_0,\alpha_1,\alpha_2)>0$.

The symmetric bilinear form $\calA(\cdot,\cdot)$ is an inner product
on $\HS{2}(\Omega)$ since from the coercivity of $\calA(\cdot,\cdot)$
it follows that $\calA(\vs,\vs)=0$ implies that $\vs=0$.
The Cauchy-Schwarz inequality implies an upper bound for
$\calA(\us,\vs)$ with continuity constant equal to
$\max(\alpha_0,\alpha_1,\alpha_2)$,
\begin{align}
  \calA(\vs,\vs)
  &\leq \max(\alpha_0,\alpha_1,\alpha_2)
  \int_{\Omega}\big( \ABS{\Delta\vs}^2 + \ABS{\nabla\vs}^2 + \ABS{\vs}^2 \big)\dxv
  = \max(\alpha_0,\alpha_1,\alpha_2) \norm{\vs}{\Vszr}^2.
  \label{eq:A:continuity}
\end{align}

The well-posedness follows from an application of the Lax-Milgram
theorem since $\calA(\cdot,\cdot)$ is coercive and continuous and
$\Fs(\cdot)$ is continuous \cite{Brenner-Scott:2008}.





\section{Virtual element approximation of the HOPF problem}
\label{sec3:VEMapproximation}

The virtual element method that approximates the variational
formulation \eqref{eq:varform} reads as
\begin{align}
  \mbox{\emph{Find~$\ush\in\Vsh{\ks}$\,\,such~that~}}
  \calAh(\ush,\vsh) = \bil{\fsh}{\vsh}
  \qquad\forall\vsh\in\Vsh{\ks}.
  \label{eq:VEM}
\end{align}
In this formulation, $\Vsh{\ks}$ is the $\HS{2}$-conforming
approximation of the space $\HS{2}(\Omega)$ provided by the VEM,
$\ush$ and $\vsh$ are the trial and test functions from this space,
and $\fsh\in(\Vsh{\ks})'$ is a liner operator in the dual space
$(\Vsh{\ks})'$ that approximates the load term $\fs$.
We define all these mathematical entities in the rest of this section,
which we devote to the construction of the VEM.

\subsection{Local enlarged virtual element space, functionals and the elliptic projector $\PiLP{\ks}$}

Consider the space of functions on the polygonal boundary $\partial\P$
for all integer $\ks\geq2$ given by
\begin{align}
  \Bsh{\ks}(\partial\P) \eqdef
  \Big\{ \vsh\in\CS{1}(\partial\P)\text{~such~that~}
  \restrict{\vsh}{\E}\in\PS{\rs_0(\ks)}(\E)
  \text{~and~}
  \restrict{\partial_n\vsh}{\E}\in\PS{\rs_1(\ks)}(\E)\,
  \,\forall\E\in\partial\P
  \Big\},
  \label{eq:Bsh:def}
\end{align}
for $\ks\geq2$, where we let $\rs_0(\ks)$ and $\rs_1(\ks)$ be the two
integer-valued functions of the integer $\ks$ such that
\begin{align*}
  \rs_0(\ks) = 
  \begin{cases}
    3   & \mbox{if $\ks=    2$},\\
    \ks & \mbox{if $\ks\geq 3$},
  \end{cases}
  \qquad
  \rs_1(\ks) = \ks-1, \quad if \ks\geq 2.
\end{align*}
The dimension of the local virtual element space
$\Bsh{\rs}(\partial\P)$ is equal to
\begin{align*}
  \dims\Bsh{\ks}(\partial\P)
  = \NPE\big(\rs_0(\ks)+\rs_1(\ks)+2\big) - 3\NPV
  = \NPE\big(\rs_0(\ks)+\rs_1(\ks)-1\big),
\end{align*}
where $\NPE$ and $\NPV$ are the number of edges and vertices of the
polygonal boundary $\partial\P$ (note that $\NPE=\NPV$).
The term $-3\NPV$ takes into consideration the constraint
$\vsh\in\CS{1}(\partial\P)$.

\medskip
Examples for different values of $\ks$ are the following ones:
\begin{itemize}
\item for $k=2$, we find that $\rs_0(2)=3$, $\rs_1(2)=1$, and
\begin{align*}
  \Bsh{2}(\partial\P) \eqdef
  \Big\{ \vsh\in\CS{1}(\partial\P)\text{~such~that~}
  \restrict{\vsh}{\E}\in\PS{3}(\E)
  \text{~and~}
  \restrict{\partial_n\vsh}{\E}\in\PS{1}(\E)\,
  \,\forall\E\in\partial\P
  \Big\};
\end{align*}

\item for $k=3$, we find that $\rs_0(3)=3$, $\rs_1(3)=2$, and
\begin{align*}
  \Bsh{3}(\partial\P) \eqdef
  \Big\{ \vsh\in\CS{1}(\partial\P)\text{~such~that~}
  \restrict{\vsh}{\E}\in\PS{3}(\E)
  \text{~and~}
  \restrict{\partial_n\vsh}{\E}\in\PS{2}(\E)\,
  \,\forall\E\in\partial\P
  \Big\};
\end{align*}

\item for $k=4$, we find that $\rs_0(4)=4$, $\rs_1(4)=3$, and
\begin{align*}
  \Bsh{4}(\partial\P) \eqdef
  \Big\{ \vsh\in\CS{1}(\partial\P)\text{~such~that~}
  \restrict{\vsh}{\E}\in\PS{4}(\E)
  \text{~and~}
  \restrict{\partial_n\vsh}{\E}\in\PS{3}(\E)\,
  \,\forall\E\in\partial\P
  \Big\}.
\end{align*}
\end{itemize}

Then, we consider the differential operator
\begin{align*}
  \calL(\vs) = \big(\alpha_2\Delta^2 - \alpha_1\Delta\big)\vs,
\end{align*}
(where we recall that $\alpha_2,\alpha_1>0$).
For all the integers $\ks\geq2$, we consider
\begin{align*}
  \Vsht{\ks}(\P) \eqdef \Big\{
  \vsh\in\HS{2}(\P):\,
  \calL\vsh\in\PS{\ks}(\P),
  \restrict{\vsh}{\partial\P}\in\Bsh{\ks}(\partial\P)
  \Big\}.
\end{align*}

\begin{figure}[t]
  \begin{center}
    \begin{tabular}{cccc}
      \includegraphics[scale=0.25]{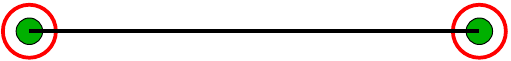} & \hspace{0.5cm}
      \includegraphics[scale=0.25]{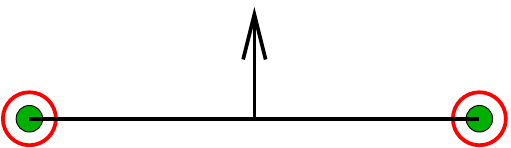} & \hspace{0.5cm}
      \includegraphics[scale=0.25]{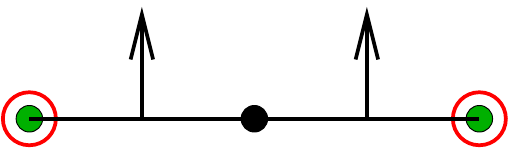} & \hspace{0.5cm}
      \includegraphics[scale=0.25]{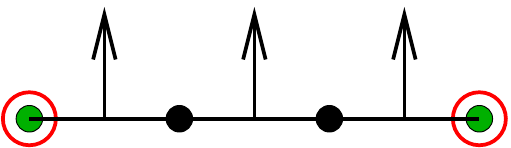} \\
      $\ks=2$ & \hspace{0.5cm} $\ks=3$ & \hspace{0.5cm} $\ks=4$ & \hspace{0.5cm} $\ks=5$ \\[0.75em]
    \end{tabular}
    \caption{ Edge degrees of freedom \DOFS{D1}-\DOFS{D2} of the
      virtual element space $\Vsh{\ks}(\P)$ with polynomial degree
      $\ks$ such that $2\leq\ks\leq5$.
      The (green) dots at the vertices represent the vertex values and
      each (red) vertex circle represents an order of derivation.
      The (black) dots on the edge represent the polynomial moments of
      the trace $\restrict{\vsh}{\E}$; the arrows represent the
      polynomial moments of $\restrict{\partial_n\vsh}{\E}$.
    }
    \label{fig:VEM:edge:dofs}
  \end{center}
\end{figure}

On every mesh element $\P\in\Th$, we consider the set of real valued,
linear and continuous functionals that associates a function
$\vs\in\HS{2}(\P)$ with
\begin{description}
\item\DOFS{D1}: for $\ks\geq2$, $\vs(\xvV)$, $\partial_x\vs(\xvV)$,
  $\partial_y\vs(\xvV)$ for any vertex $\V$ of $\partial\P$;
    
  \medskip
\item\DOFS{D2}: for $\ks\geq4$,
  $\displaystyle\frac{1}{\hE}\int_{\E}\qs\vs\dS$ for any
  $\qs\in\PS{\ks-4}(\E)$, and any edge $\E\in\partial\P$;

  \medskip
\item\DOFS{D3}: for $\ks\geq3$,
  $\displaystyle\int_{\E}\qs\partial_n\vs\dS$ for any
  $\qs\in\PS{\ks-3}(\E)$, and any edge $\E\in\partial\P$;
    
  \medskip
\item\DOFS{D4}: for $\ks\geq2$,
  $\displaystyle\frac{1}{\mP}\int_{\P}\qs\vs\dxv$ for any
  $\qs\in\PS{\ks-2}(\P)$.
\end{description}

Figure~\ref{fig:VEM:edge:dofs} shows the degrees of freedom
\DOFS{D1}-\DOFS{D3} associated with a given mesh edge $\E$.
We note that the traces $\restrict{\vsh}{\E}\in\PS{\rs_{0}(\ks)}(\E)$
and $\restrict{\partial_n\vsh}{\E}\in\PS{\rs_1(\ks)}(\E)$ are
computable using the values of \DOFS{D1}-\DOFS{D3}.
For all $\vs\in\HS{2}(\P)$, the $\LTWO$-orthogonal projection
$\Piz{\ks-2}\vs$ is computable using the values of \DOFS{D4}.

\medskip
Now, consider the integer $\ks\geq2$ and the bilinear form
$\calB(\us,\vs)=\alpha_2\astwo(\us,\vs)+\alpha_1\asone(\us,\vs)$ with
$\alpha_2,\alpha_1>0$.
We define the elliptic projection operator
$\PiLP{\ks}:\HS{2}(\P)\to\PS{\ks}(\P)$ such that for every
$\vs\in\HS{2}(\P)$, the $\ks$-degree polynomial $\PiLP{\ks}\vs$ is the
solution to the variational problem:
\begin{align}
  \calB\big(\PiLP{\ks}\vs-\vs,\qs\big)                             &= 0 \qquad\forall\qs\in\PS{\ks}(\P),\label{eq:proj:PilP:a}
  \\[0.5em]
  \int_{\partial\P}\big(\PiLP{\ks}\vs-\vs\big)\dS                     &= 0.\label{eq:proj:PilP:b}
\end{align}

\begin{remark}
  Equation~\eqref{eq:proj:PilP:a} defines the elliptic projection
  operator $\PiLP{\ks}(\cdot)$ up to constant functions on $\P$ since
  we assume that $\alpha_1>0$.
  This fact will be reflected by the way we fix the kernel of such
  projector in the practical implementation of the method,
  cf. Section~\ref{sec5:implementation}; see
  also~\cite{Antonietti-BeiraodaVeiga-Scacchi-Verani:2016},
  If we relax this condition and assume that only $\alpha_2$ is
  strictly positive and $\alpha_1$ is nonnegative, i.e.,
  $\alpha_1\geq0$, then the solution to~\eqref{eq:proj:PilP:a} is
  defined up to the harmonic polynomials
  $\PS{\ks}^H(\P)=\big\{\qs\in\PS{\ks}(\P):\Delta\qs=0\big\}$,
  \BLUE{which is a subspace of} $\PS{\ks}(\P)$.
  For example, for $\alpha_1=0$, $\ks=2$ and $\qs=xy$,
  Equation~\eqref{eq:proj:PilP:a} is satisfied independently of the
  definition of $\PiLP{\ks}\vsh$ since $\Delta(xy)=0$.
  In such a case, we should
  supplement~\eqref{eq:proj:PilP:a}-\eqref{eq:proj:PilP:b} with a
  condition that removes the indeterminacy due to the harmonic
  polynomials or redefine~\eqref{eq:proj:PilP:a} by taking $\qs$ in
  the quotient space $\PS{\ks}(\P)\setminus{\PS{\ks}^H(\P)}$.
\end{remark}

\RED{
\begin{remark}
 In this article, we have defined the projector $\PiLP{\ks}$ using the
 differential operator $\calL$.
 To define the projector uniquely, we affix one additional equation,
 i.e., \eqref{eq:proj:PilP:b}, to remove the kernel of $\calL$.
 The technique employed here can be extended to approximate the
 operator $\calL+\alpha_0\us$ whenever $\alpha_0>0$.
 In such a case, the projector $\PiLP{\ks}$ is defined uniquely
 by~\eqref{eq:proj:PilP:a}, and we do not need to fix any kernel.
 \BLUE{However, in this work we do not pursue this approach since we want
   to keep the possibility that $\alpha_0=0$.}
\end{remark}
}

The polynomial $\PiLP{\ks}\vsh$ is computable for every
$\vsht\in\Vsht{\ks}(\P)\subset\HS{2}(\P)$ using the values of the
functionals \DOFS{D1}-\DOFS{D4}.
In fact, let $\qs\in\PS{\ks}(\P)$, $\vsh\in\Vsht{\ks}(\P)$.
A repeated integration by parts yields
\begin{align*}
  \calB(\vsh,\qs)
  &= \int_{\P}\vsh\big(\alpha_2\Delta^2\qs - \alpha_1\Delta\qs\big)\dxv
  + \int_{\partial\P} \Big( \vsh\partial_{n}\big(\alpha_1\qs-\alpha_2\Delta\qs\big) + \alpha_2(\partial_{n}\vsh)\Delta\qs \Big)\dS
  \\[0.5em]
  &= \int_{\P}\vsh\calL\qs\dxv
  + \int_{\partial\P} \Big( \vsh\partial_{n}\big(\alpha_1\qs-\alpha_2\Delta\qs\big) + \alpha_2(\partial_{n}\vsh)\Delta\qs \Big)\dS.
\end{align*}
On the one hand, the volume integral on the right is computable using
the values of \DOFS{D4} since
$\calL\qs\in\PS{\ks-2}(\P)$.
On the other hand, the boundary integral on the right is computable
since the values of \DOFS{D1}-\DOFS{D3} for $\vsh$ allow us to compute
the polynomial trace of $\vsh$ and $\partial_{n}\vsh$ in
$\PS{\rs_0(\ks)}(\E)$ and $\PS{\rs_1(\ks)}(\E)$, respectively, on
every edge $\E\in\partial\P$.

\medskip
Now, let $\PS{\ks}(\P)\setminus{\PS{\ks-2}(\P)}$ denote the space of the
polynomials of degree $\ks$ that are orthogonal to the polynomials of
degree $\ks-2$, and consider the linear functionals providing the
values:
\begin{itemize}
\item ($\BAR{\mathbf{D4}}$): for $\ks\geq4$,
  $\displaystyle\frac{1}{\mP}\int_{\P}\qs\vs\dxv$ for any
  $\qs\in\PS{\ks}(\P)\setminus{\PS{\ks-2}(\P)}$.
\end{itemize}

\RED{For notation convenience, we introduce
($\TLD{\mathbf{D4}}$)=\big[\DOFS{D4},($\BAR{\mathbf{D4}}$)\big] to
collect all the functionals associated with the internal moments.
Furthermore, we let
$\calDt$=\big[\DOFS{D1},\DOFS{D2},\DOFS{D3},($\TLD{\mathbf{D4}}$)\big]. bet
the set of the values provided by all the functionals defined so far.
The following lemma states that these functionals are unisolvent in
$\Vsht{\ks}(\P)$, and can thus be taken as the degrees of freedom of
this space.  }
\begin{lemma}[Unisolvency of $\calDt$ in $\Vsht{\ks}(\P)$] \label{lemma:Unisolvency}
  Each function $\vsh\in\Vsht{\ks}(\P)$ is uniquely determined by the
  degrees of freedom \RED{$\calDt$}.
\end{lemma}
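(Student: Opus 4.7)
The plan is the standard two-part unisolvency argument: first verify that the cardinality of $\calDt$ matches $\dims\Vsht{\ks}(\P)$, and then show that a $\vsh \in \Vsht{\ks}(\P)$ annihilated by every functional in $\calDt$ must vanish. Once both parts are in hand, the functionals are linearly independent and exhaust the dual of $\Vsht{\ks}(\P)$, so they form a valid set of degrees of freedom. The dimension match rests on the observation that $\Vsht{\ks}(\P)$ is parametrized by the pair $\bigBra{\restrict{\vsh}{\partial\P},\calL\vsh} \in \Bsh{\ks}(\partial\P) \times \PS{\ks}(\P)$, via unique solvability of the fourth-order boundary value problem $\calL w = f$ in $\P$ with prescribed $w$ and $\partial_n w$ on $\partial\P$. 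This yields $\dims\Vsht{\ks}(\P) = \dims\Bsh{\ks}(\partial\P) + \dims\PS{\ks}(\P)$; a direct tally of \DOFS{D1}--\DOFS{D3} reproduces the formula for $\dims\Bsh{\ks}(\partial\P)$ recorded after~\eqref{eq:Bsh:def}, while the interior block $\TLD{\mathbf{D4}}$ contributes exactly $\dims\PS{\ks}(\P)$ moments.

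For the injectivity step, I would first work edge by edge. On each $\E \in \partial\P$, the trace $\restrict{\vsh}{\E} \in \PS{\rs_0(\ks)}(\E)$ is a univariate polynomial whose two endpoint values and tangential derivatives (extracted from \DOFS{D1}), together with the interior moments provided by \DOFS{D2}, form a Hermite-type unisolvent set in $\PS{\rs_0(\ks)}(\E)$; the vanishing of these DOFs therefore forces $\restrict{\vsh}{\E} \equiv 0$. An analogous count applies to $\restrict{\partial_n\vsh}{\E} \in \PS{\rs_1(\ks)}(\E)$, which is determined by the two endpoint normal derivative values from \DOFS{D1} and the moments from \DOFS{D3}. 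Hence both $\vsh$ and $\partial_n\vsh$ vanish on all of $\partial\P$. The final step exploits the coercivity of $\calB$: two applications of Green's identity give $\calB(\vsh,\vsh) = \int_\P \vsh\,\calL\vsh\,\dxv$ modulo boundary terms in $\restrict{\vsh}{\partial\P}$ and $\restrict{\partial_n\vsh}{\partial\P}$, which all vanish by the previous step. Since $\calL\vsh \in \PS{\ks}(\P)$ and the interior moments in $\TLD{\mathbf{D4}}$ test $\vsh$ against the whole of $\PS{\ks}(\P)$, the right-hand side is zero; because $\alpha_1,\alpha_2>0$, this forces $\nabla\vsh = 0$ on $\P$, so $\vsh$ is constant, and combined with $\restrict{\vsh}{\partial\P} \equiv 0$ we conclude $\vsh \equiv 0$.

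The main obstacle is ensuring that $\int_\P \vsh\,\calL\vsh\,\dxv$ is captured exactly by the interior degrees of freedom: this is precisely the reason $\TLD{\mathbf{D4}}$ tests $\vsh$ against the full space $\PS{\ks}(\P)$ rather than the smaller $\PS{\ks-2}(\P)$ used in the classical $\CS{1}$-VEM construction. This enlargement bypasses the enhancement trick used elsewhere and makes the kernel step a direct consequence of coercivity. A secondary point worth spelling out is the well-posedness of the defining BVP for $\Vsht{\ks}(\P)$, which follows from the coercivity and continuity of $\calB$ on $\HSzr{2}(\P)$—a routine but non-trivial ingredient of the dimension count.
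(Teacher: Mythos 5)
Your proof is correct and follows essentially the same route as the paper: a cardinality count followed by the integration-by-parts/coercivity argument showing that vanishing degrees of freedom force $\vsh$ and $\partial_n\vsh$ to vanish on $\partial\P$, the volume term $\int_\P\vsh\,\calL\vsh\dxv$ to vanish against the full set of interior moments, and hence $\nabla\vsh=0$ and $\vsh\equiv0$. Your write-up is in fact more detailed than the paper's in two places it leaves implicit --- the parametrization of $\Vsht{\ks}(\P)$ by $\Bsh{\ks}(\partial\P)\times\PS{\ks}(\P)$ for the dimension count, and the edge-by-edge Hermite-type unisolvency showing the traces vanish --- both of which check out.
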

\vspace{-0.5\baselineskip}
\begin{proof}
  From a counting argument, we note that the cardinality of
  $\calDt$ equals the dimension of $\Vsht{\ks}(\P)$.
  Then, take $\vsh\in\Vsht{\ks}(\P)$, $\ks\geq2$.
  A repeated integration by parts yields
  \begin{multline*}
    \calB(\vsh,\vsh)
    = \int_{\P}\big( \alpha_2\ABS{\Delta\vsh}^2 + \alpha_1\ABS{\nabla\vsh}^2 \big)\dxv
    = \int_{\P}\vsh\big(\alpha_2\Delta^2\vsh - \alpha_1\Delta\vsh\big)\dxv \\
    + \int_{\partial\P} \Big( \vsh\partial_{n}\big(\alpha_1\vsh-\alpha_2\Delta\vsh\big)
    + \alpha_2(\partial_{n}\vsh)\Delta\vsh \Big)\dS.
  \end{multline*}
  Let all the values provided by $\calDt$ for $\vsh$ be equal to
  zero.
  All the polynomial moments of degree (up to) $\ks$ of $\vsh$ are
  zero since the values ($\TLD{\mathbf{D4}}$) for $\vsh$ are zero.
  The volume integral is zero because it is the moment of $\vsh$
  against $\calL\vsh=\alpha_2\Delta^2\vsh-\alpha_1\Delta\vsh$, which
  is a polynomial of degree $\ks$ from the space definition.
  The boundary integrals are zero because assuming that
  \DOFS{D1}-\DOFS{D3} are equal to zero implies that all the edge
  traces of $\vsh$ and $\partial_n\vsh$ are zero.
  Therefore, $\nabla\vsh=0$, so that $\vsh$ is constant on $\P$.
  This constant must be zero since a constant function equals all its
  degrees of freedom (not implying a derivative) that we suppose to be
  zero.
\end{proof}


\subsection{Enhanced virtual element space}
The enhanced virtual element space is
\begin{align} 
  \Vsh{\ks}(\P) := \bigg\{
  \vsh\in\Vsht{\ks}(\P):\,\int_\P\big(\vsh-\PiLP{\ks}\vsh\big)\qs\dxv = 0
  \quad \forall\qs\in\PS{\ks}(\P)\setminus\PS{\ks-2}(\P)\bigg\}.
  \label{eq:Vsh-enh:def}
\end{align}
A few noteworthy properties follow from this definition.
The polynomial space $\PS{\ks}(\P)$ is a subspace of $\Vsh{\ks}(\P)$.
The $\HS{2}$-orthogonal projection operator
$\PiLP{\ks}:\Vsht{\ks}(\P)\to\PS{\ks}(\P)$ is computable using only
the degrees of freedom \TERM{D1}-\TERM{D4} and it is, thus,
independent of the additional degrees of freedom
($\overline{\mathbf{D4}}$).
The $\LS{2}$-orthogonal projection operator
$\PizP{\ks}:\Vsh{\ks}(\P)\to\PS{\ks}(\P)$ is computable using only the
degrees of freedom \TERM{D1}-\TERM{D4} and is also independent of the
additional degrees of freedom ($\overline{\mathbf{D4}}$).
Finally, as formally stated in the following lemma, the functionals
providing the values of \DOFS{D1}-\DOFS{D4} are linearly independent
on $\Vsh{\ks}(\P)$ and their number equals the dimension of
$\Vsh{\ks}(\P)$.
As a consequence, they are unisolvent in $\Vsh{\ks}(\P)$ and we can
choose them as the degrees of freedom for this space.

\begin{lemma}\label{lemma:D1-D4:unisolvency}
  The linear functionals providing the values of \DOFS{D1}-\DOFS{D4}
  are linearly independent in $\Vsh{\ks}(\P)$.
\end{lemma}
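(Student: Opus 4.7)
The plan is to establish the stronger property of unisolvency — every $\vsh\in\Vsh{\ks}(\P)$ on which all of $\DOFS{D1}$--$\DOFS{D4}$ vanish must itself be zero — together with the matching dimension count $\dims\Vsh{\ks}(\P)=\#\DOFS{D1}+\#\DOFS{D2}+\#\DOFS{D3}+\#\DOFS{D4}$. Linear independence in the dual then follows immediately, since an injective evaluation map $\Vsh{\ks}(\P)\to\REAL^{N}$ between spaces of equal dimension $N$ is bijective, and surjectivity is exactly the linear independence of the functionals.

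The first step is the dimension count, which I view as bookkeeping. The enhancement~\eqref{eq:Vsh-enh:def} imposes one linear condition on $\Vsht{\ks}(\P)$ per basis element of $\PS{\ks}(\P)\setminus\PS{\ks-2}(\P)$. To check that these $\#(\overline{\mathbf{D4}})$ conditions are independent, I would use Lemma~\ref{lemma:Unisolvency} to build, for each such basis polynomial $\qs^{\star}$, a test element of $\Vsht{\ks}(\P)$ whose only nonzero value in $\calDt$ is the moment against $\qs^{\star}$; the kernel argument below guarantees $\PiLP{\ks}$ of that element is zero, so the enhancement functional indexed by $\qs^{\star}$ evaluates nontrivially. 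Consequently, the dimension drops by exactly $\#(\overline{\mathbf{D4}})$. For $\ks\in\{2,3\}$ there is nothing to remove, $\Vsh{\ks}(\P)=\Vsht{\ks}(\P)$, and the lemma reduces to Lemma~\ref{lemma:Unisolvency}.

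The substantive step is the kernel argument for $\ks\geq 4$. Assume $\vsh\in\Vsh{\ks}(\P)$ has $\DOFS{D1}$--$\DOFS{D4}$ all vanishing. I would first prove $\PiLP{\ks}\vsh=0$. To this end, I recycle the integration-by-parts identity from the proof of Lemma~\ref{lemma:Unisolvency}:
\[
\calB(\vsh,\qs)=\int_{\P}\vsh\,\calL\qs\,\dxv+\int_{\partial\P}\!\!\Big(\vsh\,\partial_n(\alpha_1\qs-\alpha_2\Delta\qs)+\alpha_2(\partial_n\vsh)\Delta\qs\Big)\dS,\qquad\qs\in\PS{\ks}(\P).
\]
Vanishing of $\DOFS{D1}$--$\DOFS{D3}$ forces $\restrict{\vsh}{\E}=\restrict{\partial_n\vsh}{\E}=0$ on every edge, killing the boundary term; vanishing of $\DOFS{D4}$ combined with $\calL\qs\in\PS{\ks-2}(\P)$ kills the volume term. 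Hence $\calB(\vsh,\qs)=0$ for every $\qs\in\PS{\ks}(\P)$, so by~\eqref{eq:proj:PilP:a} the polynomial $\PiLP{\ks}\vsh$ lies in the kernel of $\calB(\cdot,\cdot)$ on $\PS{\ks}(\P)$, i.e., is a constant; the normalization~\eqref{eq:proj:PilP:b} combined with $\restrict{\vsh}{\partial\P}=0$ then pins that constant to zero.

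With $\PiLP{\ks}\vsh=0$ in hand, the enhancement condition collapses to $\int_{\P}\vsh\,\qs\,\dxv=0$ for every $\qs\in\PS{\ks}(\P)\setminus\PS{\ks-2}(\P)$, which is precisely the statement that the $(\overline{\mathbf{D4}})$ values of $\vsh$ vanish. Together with the hypothesis that $\DOFS{D4}$ vanishes, this forces all of $\calDt$ to vanish on $\vsh$, and Lemma~\ref{lemma:Unisolvency} concludes $\vsh=0$. The only nonroutine point I anticipate is the bookkeeping in the first step that the enhancement really cuts dimension by exactly $\#(\overline{\mathbf{D4}})$; the kernel argument is a direct replay of the boundary/volume cancellation already appearing in the proof of Lemma~\ref{lemma:Unisolvency}, combined with the characterization of $\PiLP{\ks}$ up to constants fixed by~\eqref{eq:proj:PilP:b}.
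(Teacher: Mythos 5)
Your proof is correct and follows essentially the same route as the paper: show that vanishing \DOFS{D1}--\DOFS{D4} forces $\PiLP{\ks}\vsh=0$ (since the projector is computable from those functionals alone), deduce from the enhancement condition that the $(\BAR{\mathbf{D4}})$ moments also vanish, and invoke Lemma~\ref{lemma:Unisolvency} to conclude $\vsh=0$. You additionally spell out the integration-by-parts computation behind $\PiLP{\ks}\vsh=0$ and the dimension count showing the enhancement cuts $\dims\Vsht{\ks}(\P)$ by exactly $\#(\BAR{\mathbf{D4}})$ — details the paper asserts in the surrounding text rather than inside the proof — but the substance of the argument is the same.
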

\vspace{-0.5\baselineskip}
\begin{proof}
  Let $\vsh\in\Vsh{\ks}(\P)$ such that all the values provided by the
  functionals \DOFS{D1}-\DOFS{D4} are zero.
  We only need to prove that $\vsh=0$.
  To this end, we first note that $\PiLP{\ks}\vsh=0$ as this
  polynomial projection only depends on the functionals returning the
  values of \DOFS{D1}-\DOFS{D4} (see the computability of $\PiLP{k}$).
  Since $\PiLP{\ks}\vsh=0$, the definition of $\Vsh{\ks}(\P)$
  implies that
  \begin{align}
    \int_{\P}\qs\vsh\dxv =
    \int_{\P}\qs\PiLP{\ks}\vsh\dxv
    = 0
    \qquad
    \forall\qs\in\PS{\ks}(\P)\setminus\PS{\ks-2}(\P).
  \end{align}
  Hence, the values of the functionals ($\TLD{\mathbf{D4}}$) of $\vsh$
  must also be equal to zero.
  Since all functionals \DOFS{D1}, \DOFS{D2}, \DOFS{D3},
  ($\TLD{\mathbf{D4}}$) are zero for
  $\vsh\in\Vsh{\ks}(\P)\subset\Vsht{\ks}(\P)$, and these functionals
  are unisolvent in $\Vsht{\ks}(\P)$, it follows that $\vsh=0$.
\end{proof}

Let $\calD$=\big[ \DOFS{D1}, \DOFS{D2}, \DOFS{D3}, \DOFS{D4} \big]
denote the set of the linear functionals associated with the degrees
of freedom for $\Vsh{\ks}(\P)$.
In view of Lemma~\ref{lemma:D1-D4:unisolvency}, we conclude that the
triplet $\big(\P,\Vsh{\ks}(\P),\calD\big)$ is a finite element in the
sense of Ciarlet, cf.~\cite[Chapter~3]{Ciarlet:2002}.

The global virtual element space for $\ks\geq2$ is given by
\begin{align}
  \Vsh{\ks} := \Big\{
  \vsh\in\HS{2}(\Omega)\,:\,\restrict{\vsh}{\P}\in\Vsh{\ks}(\P)
  \quad\forall\P\in\Th \Big\}.
\end{align}
The degrees of freedom of the functions in $\Vsh{\ks}$ are obtained by
an $\HS{2}$-conforming coupling of the elemental degrees of freedom
and are thus provided by the values of the functionals:
\begin{description}
\item\DOFS{D1}: for $\ks\geq2$, $\vsh(\xvV)$, $\partial_x\vsh(\xvV)$,
  $\partial_y\vsh(\xvV)$ for any vertex $\V$ of $\calV$;
    
  \medskip
\item\DOFS{D2}: for $\ks\geq4$,
  $\displaystyle\frac{1}{\hE}\int_{\E}\qs\vsh\dS$ for any
  $\qs\in\PS{\ks-4}(\E)$, and any edge $\E\in\calE$;

  \medskip
\item\DOFS{D3}: for $\ks\geq3$,
  $\displaystyle\int_{\E}\qs\partial_n\vsh\dS$ for any
  $\qs\in\PS{\ks-3}(\E)$, and any edge $\E\in\calE$;
    
  \medskip
\item\DOFS{D4}: for $\ks\geq2$,
  $\displaystyle\frac{1}{\mP}\int_{\P}\qs\vsh\dxv$ for any
  $\qs\in\PS{\ks-2}(\P)$ and any $\P\in\Th$.
\end{description}
The sign of the normal derivative $\partial_n\vsh$ along the edge $\E$
is determined by the global edge orientation and may differ by a
factor $-1$ from its elementwise value.
The unisolvence of \DOFS{D1}-\DOFS{D4} in $\Vsh{\ks}$ is an immediate
consequence of their unisolvence at the elemental level.
The unisolvence property implies the existence of a global Lagrangian
basis $\varphi_1,\varphi_2,\ldots,\varphi_{\Ndofs}$ (in a global numbering
system) where $\Ndofs$ is the total number of degrees of freedom, and
such that the $i$-th basis function $\varphi_i$ has all degrees of
freedom equal to zero except the $i$-th one whose value is $1$.
The existence of such a set of basis functions, although virtual, is
crucial in the implementation of the method.

\subsection{Virtual element approximation of $\calA(\cdot,\cdot)$
  and $\calF(\cdot)$}
\label{subsec3:VEM:bilinear:form:RHS}

Let $\P\in\Th$ be a mesh element, and consider the bilinear forms
$\astwoP,\asoneP,\aszerP:\Vsh{\ks}(\P)\times\Vsh{\ks}(\P)\to\REAL$
given by integrating on $\P$ instead of $\Omega$ in the corresponding
bilinear forms in~\eqref{eq:A210:def}.
Let
$\calAP(\cdot,\cdot)=\alpha_2\astwoP(\cdot,\cdot)+\alpha_1\asoneP(\cdot,\cdot)+\alpha_0\aszerP(\cdot,\cdot)$.
We use the elliptic projection $\PiLP{\ks}$ and the
$\LS{2}$-orthogonal projection $\PizP{\ks}$ to define the virtual
element bilinear form
$\calAhP:\Vsh{\ks}(\P)\times\Vsh{\ks}(\P)\to\REAL$:
\begin{multline*}
  \calAhP(\ush,\vsh)
  := \alpha_2\astwoP(\PiLP{\ks}\ush,\PiLP{\ks}\vsh)
  + \alpha_1\asoneP(\PiLP{\ks}\ush,\PiLP{\ks}\vsh)
  + \alpha_0\aszerP(\PizP{\ks}\ush,\PizP{\ks}\vsh)\\
  + \SPh\Big( \ush-\PiLP{\ks}\ush, \vsh-\PiLP{\ks}\vsh \Big).
\end{multline*}
Herein, the stabilization term is also built by using the projection
$\PiLP{\ks}(\vsh)$, and the usual formula, so that the bilinear form
$\SPh:\Vsh{\ks}(\P)\times\Vsh{\ks}(\P)\to\REAL$ can be any symmetric, positive
definite, bilinear form such that
\RED{
\begin{align}
  \sigma_*\calBP(\vsh,\vsh)\leq\SPh(\vsh,\vsh)\leq\sigma^*\calBP(\vsh,\vsh)
  \qquad\forall\vsh\in\Vsh{\ks}(\P)\textrm{~with~}\PiLP{\ks}\vsh=0,
  \label{eq:poly:S:stability:property}
\end{align}
}
where $\sigma_*$ and $\sigma^*$ are two positive constants independent
of $\hh$ (and the chosen $\P$).

\medskip
The bilinear form $\calAhP(\cdot,\cdot)$ has the two major properties:
\begin{description}
\item[$(i)$] $\ks$-\textbf{Consistency}: for every polynomial
  $\qs\in\PS{\ks}(\P)$ and virtual element function
  $\vsh\in\Vsh{\ks}(\P)$ it holds:
  \begin{align}
    \calAhP(\vsh,\qs) = \calAP(\vsh,\qs);
    \label{eq:poly:r-consistency}
  \end{align}
\item[$(ii)$] \textbf{Stability}: there exist two positive constants
  $\beta_*$, $\beta^*$ independent of $\hh$ (and $\P$) such that for
  every $\vsh\in\Vsh{\ks}(\P)$ it holds:
  \begin{align}
    \beta_*\calAP(\vsh,\vsh)\leq\calAhP(\vsh,\vsh)\leq\beta^*\calAP(\vsh,\vsh).
    \label{eq:poly:stability}
  \end{align}
  It is immediate to check that for example~\eqref{eq:poly:stability}
  holds by taking
  \begin{align*}
    \beta_*=\min(\alpha_0,\alpha_1,\alpha_2,\sigma_*)
    \quad\textrm{and}\quad
    \beta^*=\max(\alpha_0,\alpha_1,\alpha_2,\sigma^*).
  \end{align*}  
\end{description}
A straightforward consequence of the stability condition $(ii)$ stated
above, is that the discrete bilinear form $\calAhP(\cdot,\cdot)$ is
continuous and coercive.
These properties extend to the global virtual element bilinear form
$\calAh:\Vsh{\ks}\times\Vsh{\ks}\to\REAL$ that we define by adding all
the local terms,
\begin{align*}
  \calAh(\ush,\vsh) := \sum_{\P\in\Th} \calAhP(\ush,\vsh),
  \qquad\forall\ush,\vsh\in\Vsh{\ks}.
\end{align*}
We will find it useful to consider the local and global, discrete
bilinear forms $\calBhP:\Vsh{\ks}(\P)\times\Vsh{\ks}(\P)\to\REAL$, for
all $\P\in\Th$, and $\calBh:\Vsh{\ks}\times\Vsh{\ks}\to\REAL$ that are such that
\begin{align}
  \calAhP(\ush,\vsh) &= \calBhP(\ush,\vsh) + \alpha_0\aszerP(\PizP{\ks}\ush,\PizP{\ks}\vsh),\label{eq:BhP:def}\\
  \calAh (\ush,\vsh) &= \calBh (\ush,\vsh) + \alpha_0\aszer(\Piz{\ks}\ush, \Piz{\ks}\vsh),\label{eq:Bh:def}
\end{align}
where $\PizP{\ks}\vsh$ is the $\LTWO$-orthogonal projection of $\vsh$
onto the local polynomial subspace $\PS{\ks}(\P)$ of $\Vsh{\ks}(\P)$;
$\Piz{\ks}\vsh$ is the global $\LTWO$-orthogonal projection onto the
space of $k$-degree piecewise polynomials $\PS{\ks}(\Th)$ such that
$\restrict{(\Piz{k}\vsh)}{\P}=\PizP{k}\big(\restrict{\vsh}{\P}\big)$ for all
$\P\in\Th$.
Since $\calBhP(\cdot,\cdot)$ and $\calBh(\cdot,\cdot)$ have the same
stabilization term of $\calAhP(\cdot,\cdot)$ and
$\calAh(\cdot,\cdot)$, it is immediate to see that they satisfy the
same consistency and stability properties (with slightly different
constants) stated above as $(i)$ and $(ii)$, and clearly
\begin{align*}
  \calBh(\ush,\vsh) := \sum_{\P\in\Th} \calBhP(\ush,\vsh),
  \qquad\forall\ush,\vsh\in\Vsh{\ks}.
\end{align*}
The bilinear form $\calBh(\cdot,\cdot)$ is also globally continuous
and coercive.

\medskip
Assuming that $\fs\in\HS{k-1}(\Omega)$, we approximate the right-hand
side of~\eqref{eq:varform} through the piecewise $\LTWO$-orthogonal
projection $\restrict{\fsh}{\P}=\PizP{\ks-2}\fs$ for all $\P\in\Th$,
so that
\begin{align}
  \bil{\fsh}{\vsh} := \sum_{\P\in\Th} \bilP{\fsh}{\vsh},\qquad
  \bilP{\fsh}{\vsh} = \int_{\P}\fs\PizP{\ks-2}\vsh\dxv.
  \label{eq:VEM:RHS:def}
\end{align}

\subsection{Well-posedness}
\label{subsec3:well-posedness}
We conclude this section with the well-posedness result.
The well-posedness of the discrete VEM~\eqref{eq:VEM} directly follows
from the global coercivity and continuity of $\calAh(\cdot,\cdot)$ and
the boundedness of $\bil{\fsh}{\cdot}$, as stated in the next
proposition.
\begin{proposition}
  The VEM~\eqref{eq:VEM} with the previous definitions of
  $\calAh(\cdot,\cdot)$ and $\bil{\fsh}{\cdot}$ is well-posed.
\end{proposition}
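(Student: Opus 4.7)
The plan is to invoke the Lax--Milgram theorem on the finite-dimensional subspace $\Vsh{\ks}\subset\HS{2}(\Omega)$; as usual for nonhomogeneous Dirichlet problems, the boundary degrees of freedom \DOFS{D1} and \DOFS{D3} are first fixed from a lift of the data $\gs_0,\gs_1$, and testing is performed against functions vanishing in those boundary degrees of freedom, which inherit the $\HSzr{2}(\Omega)$ structure. What remains to verify is therefore the coercivity and continuity of $\calAh(\cdot,\cdot)$ and the boundedness of $\bil{\fsh}{\cdot}$ with respect to the norm $\norm{\cdot}{\Vszr}$.

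For coercivity, I would apply the local stability bound~\eqref{eq:poly:stability}, sum over $\P\in\Th$, recognize the elementwise identity $\calAP(\vsh,\vsh)=\alpha_2\snorm{\vsh}{2,\P}^2+\alpha_1\snorm{\vsh}{1,\P}^2+\alpha_0\norm{\vsh}{0,\P}^2$, and extract $\min(\alpha_0,\alpha_1,\alpha_2)$, mirroring the continuous argument in~\eqref{eq:As:coercivity}:
\begin{align*}
\calAh(\vsh,\vsh)\geq\beta_*\sum_{\P\in\Th}\calAP(\vsh,\vsh)\geq\beta_*\min(\alpha_0,\alpha_1,\alpha_2)\norm{\vsh}{\Vszr}^2.
\end{align*}
Continuity follows symmetrically: the upper bound in~\eqref{eq:poly:stability}, combined elementwise with the Cauchy--Schwarz inequality for the symmetric positive semidefinite form $\calAhP$ and then a discrete Cauchy--Schwarz inequality across the mesh, yields $\calAh(\ush,\vsh)\leq\beta^{*}\max(\alpha_0,\alpha_1,\alpha_2)\norm{\ush}{\Vszr}\norm{\vsh}{\Vszr}$.

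For the load functional, the definition~\eqref{eq:VEM:RHS:def}, Cauchy--Schwarz, and the $\LTWO$-contractivity of $\PizP{\ks-2}$ give $\bigAbs{\bilP{\fsh}{\vsh}}\leq\norm{\fs}{0,\P}\norm{\vsh}{0,\P}$ on each $\P\in\Th$; summing and using $\norm{\vsh}{0,\Omega}\leq\norm{\vsh}{\Vszr}$ produces the global bound $\bigAbs{\bil{\fsh}{\vsh}}\leq\norm{\fs}{0,\Omega}\norm{\vsh}{\Vszr}$. Well-posedness then follows directly from Lax--Milgram. No genuine obstacle is expected: all three properties have essentially been set up in Section~\ref{subsec3:VEM:bilinear:form:RHS}, and the verification reduces to bookkeeping over the mesh. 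The only minor item worth noting is that the $\LTWO$ term $\alpha_0\aszerP(\PizP{\ks}\ush,\PizP{\ks}\vsh)$ in $\calAhP$ contributes nonnegatively to coercivity and is continuous by the $\LTWO$-stability of $\PizP{\ks}$, which is precisely why its projection-based treatment (as opposed to a stabilized one) is harmless.
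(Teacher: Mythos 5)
Your proposal is correct and follows essentially the same route as the paper, which likewise invokes the Lax--Milgram theorem after noting that well-posedness ``directly follows from the global coercivity and continuity of $\calAh(\cdot,\cdot)$ and the boundedness of $\bil{\fsh}{\cdot}$''; you simply spell out the elementwise verification that the paper leaves implicit. The only cosmetic remark is that the elementwise identity should read $\calAP(\vsh,\vsh)=\alpha_2\int_{\P}\abs{\Delta\vsh}^2\dxv+\alpha_1\snorm{\vsh}{1,\P}^2+\alpha_0\norm{\vsh}{0,\P}^2$ (Laplacian rather than full Hessian seminorm), which is exactly what is needed since $\norm{\cdot}{\Vszr}$ is defined through the Laplacian.
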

\vspace{-0.5\baselineskip}
\begin{proof}
  This result is a direct consequence of the Lax-Milgram Theorem,
  cf. \cite[Theorem~2.7.7]{Brenner-Scott:2008}.
\end{proof}




\section{Convergence analysis}
\label{sec4:Theory}

In this section, we prove the convergence of the VEM by quantifying
the error associated with the discrete scheme~\eqref{eq:VEM}.
Upon exploiting the coercivity and continuity of the discrete bilinear
form $\calA_{\hh}(\cdot,\cdot)$, we bound the term
$\snorm{\us-\ush}{2,\Omega}$.
Then, by using the duality argument, and the previous estimate of
$\snorm{\us-\ush}{2,\Omega}$, we derive the error estimates in $\LTWO$
norm and $\HONE$ seminorm assuming that the domain $\Omega$ is convex.
To prove such estimates, we need the regularity results associated
with the biharmonic problem
\begin{align*}
  \alpha_2\Delta^2\xi - \alpha_1\Delta\xi + \alpha_0\xi&=\gs,\phantom{0}\quad \text{in~}\Omega,\\
  \xi=\partial_n\xi                                   &=0,\phantom{\gs}\quad \text{on~}\Gamma.
\end{align*}
Since $\Omega$ is a convex domain, and according to the regularity
results in~\cite{Grisvard:1992}, we know that
\begin{itemize}
\item the regularity result holds:
  \begin{equation}
    \label{eq:regularity:1}
    \gs\in\HS{-1}(\Omega)\implies \xi\in\HS{3}(\Omega)
    \quad\text{and}\quad \norm{\xi}{3,\Omega}\leq\Cs\norm{\gs}{-1,\Omega};
  \end{equation}
  
\item there exists a real number $s$, with $0<s\leq 1$, such that
   \begin{equation}
   \label{eq:regularity:2}
   \gs\in\LTWO(\Omega)\implies\xi\in\HS{3+s}(\Omega)
   \quad\text{and}\quad \norm{\xi}{3+s,\Omega}\leq\Cs\norm{\gs}{0,\Omega}.
   \end{equation}
\end{itemize}
The values of $s$ depends on the maximum angle in $\Omega$.
If the maximum angle \BLUE{is strictly less than} $\pi$, then $s=1$
and $\xi\in\HS{4}(\Omega)$.


\subsection{\BLUE{Mesh assumptions}}

For the convergence analysis, we assume that the mesh family
$\{\Th\}_{0<h\leq 1}$ satisfies the following regularity condition.
\begin{assumption}[Mesh regularity]
  \label{Mesh:regularity}
  There exists a positive real number $\rho$ independent of $\hh$ such
  that for every $\P\in\Th$, it holds that
  \begin{itemize}
    \medskip
  \item[$\mathbf{(A1)}$]\textbf{star-shapedness}: $\P$ is star-shaped
    with respect to an internal ball with radius bigger than
    $\rho\hP$;
    \medskip
  \item[$\mathbf{(A2)}$]\textbf{uniform scaling}: the edge length
    $\hE$ for all $\E\in\partial\P$ is bounded from below by
    $\rho\hP$, i.e., $\hE\geq\rho\hP$.
  \end{itemize}
\end{assumption}
A consequence of these properties is that the element $\P$ admits a
uniformly shape-regular subtriangulation, i.e., the minimum angle of
all the subtriangles partitioning $\P$ is bounded from below by some
positive constant independent of $\hh$ (and $\P$).

\subsection{A priori error estimates}

The convergence of the virtual element method, which we state in
Theorem~\ref{ThmH2} below, follows from the (standard) error bounds
for the polynomial projection and interpolation operators stated in
two technical lemmas, e.g., Lemma~\ref{lemma:polyApprox} and
Lemma~\ref{lemma:VEM:interpolant}, which we report below for
completeness without proof (for a proof, see
\cite{Brezzi-Marini:2013,Adak-Mora-Natarajan-Silgado:2021}) and the
abstract convergence result of Lemma~\ref{lemma:abstract:convergence}.
To ease the notation, in both lemma statements we adopt the convention
$\snorm{\,\cdot\,}{0,\P}=\norm{\,\cdot\,}{0,\P}$ to denote the
$\LTWO$-norm over $\P$.

\begin{lemma}[Polynomial Approximation]
  \label{lemma:polyApprox}
  Under mesh regularity assumptions $\mathbf{(A1)}$-$\mathbf{(A2)}$,
  see Assumption~\ref{Mesh:regularity}, there exists a positive
  constant $\Cs$ independent of $\hh$ such that for all
  $\vs\in\HS{\delta}(\P)$, $\delta$ being a real number such that
  $0\leq\delta\leq\ks+1$, there exists a polynomial approximation
  $\vs_\pi\in\PS{\ks}(\P)$, such that
  \begin{align*}
    \snorm{\vs-\vs_{\pi}}{\ell,\P}\leq\Cs\hh_{\P}^{\delta-\ell}\snorm{\vs}{\delta,\P},
    \quad
    \ell=0,\ldots,[\delta],
  \end{align*}
  where $[\delta]$ denotes the largest integer equal to or smaller
  than $\delta$.
  The constant $\Cs$ may depend on the mesh regularity parameter
  $\rho$.
\end{lemma}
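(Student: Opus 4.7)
The plan is to construct $\vs_\pi$ as an averaged Taylor polynomial in the Dupont--Scott style, exploiting the star-shapedness assumption $\mathbf{(A1)}$, and then to invoke a Bramble--Hilbert argument to obtain the stated bound.

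First I would fix the ball $B\subset\P$ with respect to which $\P$ is star-shaped, whose radius is bounded below by $\rho\hP$, and define $\vs_\pi := Q^{\ks}\vs$, where
\[
Q^{\ks}\vs(\xv) := \int_{B} T^{\ks}\vs(\xv,\yv)\,\phi(\yv)\,d\yv,
\]
with $T^{\ks}\vs(\cdot,\yv)$ the $\ks$-th order Taylor polynomial of $\vs$ centered at $\yv$ and $\phi$ a smooth cut-off supported in $B$ with unit integral. By construction $Q^{\ks}$ maps into $\PS{\ks}(\P)$ and reproduces polynomials of degree $\ks$, and it extends continuously to $\HS{\delta}(\P)$ for $\delta\geq 0$ by a density argument.

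For integer exponents $\delta = 0, 1, \ldots, \ks+1$, I would then proceed by scaling to a reference element $\widehat{\P}$ of unit diameter via the affine map $\xv\mapsto(\xv-\xvP)/\hP$. On $\widehat{\P}$, the Bramble--Hilbert lemma yields an estimate of the form $\snorm{\hat\vs - Q^{\ks}\hat\vs}{\ell,\widehat{\P}} \leq \Cs\snorm{\hat\vs}{\delta,\widehat{\P}}$, and pushing the estimate forward through the map produces the factor $\hP^{\delta-\ell}$. The constant on the reference domain is controlled by the shape of $\widehat{\P}$ and by the unit-scaled radius of $B$, and is thus a function of $\rho$ alone.

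For non-integer values of $\delta$, I would fill in the estimate by real interpolation of the integer-$\delta$ bounds between consecutive integer Sobolev spaces (the $K$-method of Lions--Peetre), using the fact that both $Q^{\ks}$ and the affine scaling commute with interpolation. The step where I expect to spend the most care is keeping the constants independent of $\hh$ and uniform over the mesh family: assumption $\mathbf{(A1)}$ provides the ball $B$ needed to make the averaged Taylor polynomial well defined with a controlled norm, while $\mathbf{(A2)}$ guarantees the existence of a uniformly shape-regular subtriangulation of $\P$, which in turn keeps the Bramble--Hilbert constant uniform across the family $\{\Th\}_{0<\hh\leq 1}$.
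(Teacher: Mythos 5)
The paper does not prove this lemma at all: it is quoted ``for completeness without proof'' with a pointer to \cite{Brezzi-Marini:2013,Adak-Mora-Natarajan-Silgado:2021}, so there is no in-paper argument to compare against. Your proposal is essentially the standard Dupont--Scott proof used in that literature --- averaged Taylor polynomial over the star-shapedness ball from $\mathbf{(A1)}$, Bramble--Hilbert on the $\hP$-scaled element for integer $\delta$, and real interpolation for fractional $\delta$ --- and it is sound; the only point worth stressing is that for a polygonal family there is no single reference element, so one must use the constructive (chunkiness-parameter-explicit) form of Bramble--Hilbert rather than a compactness argument, which your averaged-Taylor construction indeed provides.
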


\begin{lemma}[Virtual element interpolation]
  \label{lemma:VEM:interpolant}
  Under mesh regularity assumptions $\mathbf{(A1)}$-$\mathbf{(A2)}$,
  see Assumption~\ref{Mesh:regularity}, there exists a positive
  constant $\Cs$ independent of $\hh$ such that for all
  $\vs\in\HS{\delta}(\P)$, $\delta$ being a real number such that
  $2\leq\delta\leq\ks+1$, there exists a virtual element approximation
  $\vsI\in\Vsh{\ks}(\P)$ such that
  \begin{align*}
    \snorm{\vs-\vsI}{\ell,\P} \leq\Cs\hP^{\delta-\ell} \snorm{\vs}{\delta,\P},
    \quad \ell=0,1,2.
  \end{align*}
  The constant $\Cs$ may depend on the mesh regularity parameter
  $\rho$.
\end{lemma}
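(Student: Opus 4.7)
The plan is to define the virtual element interpolant $\vsI\in\Vsh{\ks}(\P)$ as the unique function whose degrees of freedom \DOFS{D1}--\DOFS{D4} coincide with those of $\vs$, which is well-posed thanks to the unisolvence established in Lemma~\ref{lemma:D1-D4:unisolvency}. I would then split the error through a triangle inequality involving the polynomial approximant $\vs_\pi\in\PS{\ks}(\P)$ provided by Lemma~\ref{lemma:polyApprox}, writing
\begin{equation*}
\snorm{\vs-\vsI}{\ell,\P} \leq \snorm{\vs-\vs_\pi}{\ell,\P} + \snorm{\vs_\pi-\vsI}{\ell,\P}.
\end{equation*}
The first summand is already controlled by Lemma~\ref{lemma:polyApprox} and yields the target bound $\Cs\hP^{\delta-\ell}\snorm{\vs}{\delta,\P}$.

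For the second summand I would set $\ws:=\vs_\pi-\vsI\in\Vsh{\ks}(\P)$ and observe that, since $\vsI$ matches $\vs$ in every degree of freedom and polynomials in $\PS{\ks}(\P)$ are reproduced exactly by the interpolation operator, the functionals in $\calD$ applied to $\ws$ coincide with those applied to $\vs_\pi-\vs$. The pivotal intermediate result is then a scaled DOF stability inequality of the form
\begin{equation*}
\snorm{\varphi}{\ell,\P} \leq \Cs\sum_{i}\hP^{\beta_i-\ell}\bigl|\mathrm{dof}_{i}(\varphi)\bigr| \qquad\forall\varphi\in\Vsh{\ks}(\P),
\end{equation*}
with $\beta_i$ the natural scaling weight associated with each functional (zero for vertex values, one for vertex gradient components, suitable fractional powers for the edge moments of the trace and of the normal derivative according to their normalizations, and one for the cell moments). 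Such a bound is obtained by rescaling to the dilated element $\hP^{-1}\P$, on which $\Vsh{\ks}$ has uniformly bounded dimension; the equivalence of all norms on that finite-dimensional space together with the unisolvence of Lemma~\ref{lemma:D1-D4:unisolvency} yields the estimate, with constants kept uniform across the mesh family thanks to assumptions $\mathbf{(A1)}$--$\mathbf{(A2)}$ that guarantee a shape-regular subtriangulation.

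It then remains to bound each DOF of $\vs-\vs_\pi$ by Sobolev norms of $\vs-\vs_\pi$ itself: vertex point values and gradient components are handled via scaled Sobolev embeddings (valid in two dimensions since $\delta\geq2$), edge polynomial moments of the trace and of the normal derivative via scaled trace inequalities, and cell moments via Cauchy--Schwarz. Combining each of these with Lemma~\ref{lemma:polyApprox} delivers $\snorm{\ws}{\ell,\P}\leq\Cs\hP^{\delta-\ell}\snorm{\vs}{\delta,\P}$, which closes the argument by the initial triangle inequality. The main obstacle is precisely the DOF stability inequality: because the virtual basis is not known in closed form, the bound must be obtained indirectly through unisolvence and a compactness argument on the rescaled element, and the uniformity of the resulting constants across the whole family of admissible element shapes is where the mesh regularity assumptions $\mathbf{(A1)}$--$\mathbf{(A2)}$ play their essential role.
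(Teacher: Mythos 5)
You should first be aware that the paper does not prove this lemma at all: it is stated ``without proof'' with a pointer to \cite{Brezzi-Marini:2013,Adak-Mora-Natarajan-Silgado:2021}, so there is no in-paper argument to compare against. Your overall strategy (DOF-matching interpolant, triangle inequality through the polynomial approximant $\vs_\pi$ of Lemma~\ref{lemma:polyApprox}, a scaled DOF-stability inequality, and trace/embedding bounds on the individual functionals applied to $\vs-\vs_\pi$) is indeed the standard route taken in those references and in the broader VEM literature, and the algebraic reductions you perform --- in particular $\dofOp{i}{\vs_\pi-\vsI}=\dofOp{i}{\vs_\pi-\vs}$, which uses $\PS{\ks}(\P)\subset\Vsh{\ks}(\P)$ --- are correct. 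Two steps, however, are genuinely incomplete as written.

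First, the pivotal DOF-stability inequality cannot be obtained by invoking ``equivalence of all norms on that finite-dimensional space'' after rescaling. Unlike classical FEM there is no single reference element: the space $\Vsh{\ks}(\hP^{-1}\P)$ varies with the shape of the polygon, so norm equivalence on each individual space produces a constant depending on that shape, and the uniformity over the admissible class is precisely what must be proved, not what follows ``thanks to $\mathbf{(A1)}$--$\mathbf{(A2)}$''. A compactness argument over the shape class would additionally require continuity of the DOF-to-function map with respect to the domain, which is delicate for spaces defined implicitly through the local boundary value problem $\calL\vsh\in\PS{\ks}(\P)$. The rigorous versions of this estimate in the literature avoid compactness altogether and argue directly from the a priori stability of that local problem combined with polynomial inverse and trace inequalities; your sketch needs to be replaced by (or at least reduced to) such an argument. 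Second, your claim that the vertex gradient values in \DOFS{D1} are ``handled via scaled Sobolev embeddings (valid in two dimensions since $\delta\geq2$)'' fails at the endpoint $\delta=2$: in two dimensions $\HS{2}(\P)$ embeds into $\CS{0}(\overline{\P})$ but $\nabla\vs\in\HSv{1}(\P)$ has no well-defined point values, so the DOF-matching interpolant is not even defined for $\vs\in\HS{2}(\P)$ only. The standard repair is a two-step construction: first apply an $\HS{2}$-stable quasi-interpolant onto a $\CS{1}$ piecewise-polynomial space on the shape-regular subtriangulation guaranteed by $\mathbf{(A1)}$--$\mathbf{(A2)}$, then take the VEM interpolant of that smoother function and sum the two error contributions. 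Without this step the lemma is only established for $\delta>2$.
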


\begin{lemma}[Abstract result]
  \label{lemma:abstract:convergence}
  Let $\us$ be the solution of the variational
  problem~\eqref{eq:varform} and $\ush$ the solution of the virtual
  element method~\eqref{eq:VEM} under mesh regularity
  assumptions $\mathbf{(A1)}$-$\mathbf{(A2)}$.
  Then, there exists a positive constant $C>0$, such that the
  (piecewise discontinuous) polynomial approximation
  $\us_{\pi}\in\PS{\ks}(\Th)$ from Lemma~\ref{lemma:polyApprox}, and
  the interpolation approximation $\usI\in\Vsh{\ks}$ of $\us$ from
  Lemma~\ref{lemma:VEM:interpolant} satisfy the inequality
  \begin{align}
    \snorm{\us-\ush}{2,\Omega}
    \leq \Cs\Big(
    \snorm{\us-\usI}{2,\Omega} +
    \snorm{\us-\us_{\pi}}{2,\Omega} +
    \norm{\fs-\fsh}{(\Vsh{\rs})'}
    \Big),
    \label{eq:abstract:convg}
  \end{align}
  where $\norm{\cdot}{(\Vsh{\ks})'}$ denotes the norm of
  $(\Vsh{\ks})'$, the dual space of $\Vsh{\ks}$.
\end{lemma}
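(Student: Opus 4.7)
The plan is a Strang-type error analysis adapted to the VEM. I split the error as $\us-\ush = (\us-\usI) + (\usI-\ush)$ and work with the discrete remainder $\dvh := \usI - \ush \in \Vsh{\ks}$. Since the vertex and edge degrees of freedom of the $C^1$-VEM uniquely determine the boundary traces of a function and its normal derivative, both $\usI$ and $\ush$ reproduce the same Dirichlet data on $\Gamma$, so that $\dvh \in \HSzr{2}(\Omega)$. For such functions the Poincar\'e inequality together with the integration-by-parts identity $\snorm{\Delta \dvh}{0,\Omega}=\snorm{\dvh}{2,\Omega}$ makes $\norm{\dvh}{\Vszr}$ equivalent to $\snorm{\dvh}{2,\Omega}$. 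Combined with property $(ii)$ and the continuous coercivity~\eqref{eq:As:coercivity}, this reduces the proof to bounding $\calAh(\dvh,\dvh)$ from above.

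To do so, I exploit the discrete equation $\calAh(\ush,\dvh) = \bil{\fsh}{\dvh}$ together with the continuous equation $\calA(\us,\dvh) = \Fs(\dvh) = \scal{\fs}{\dvh}$. Writing $\calAh(\dvh,\dvh) = \calAh(\usI,\dvh) - \calAh(\ush,\dvh)$, I introduce the piecewise polynomial $\us_\pi \in \PS{\ks}(\Th)$ element by element through $\calAhP(\usI,\dvh) = \calAhP(\usI-\us_\pi,\dvh) + \calAhP(\us_\pi,\dvh)$. The $\ks$-consistency property~\eqref{eq:poly:r-consistency} replaces the second summand by $\calAP(\us_\pi,\dvh)$; summing over $\P \in \Th$ and substituting $\calA(\us,\dvh) = \scal{\fs}{\dvh}$ yields the key identity
\begin{equation*}
  \calAh(\dvh,\dvh) = \calAh(\usI-\us_\pi,\dvh) - \calA(\us-\us_\pi,\dvh) + \bil{\fs-\fsh}{\dvh}.
\end{equation*}

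The three summands on the right are then estimated by Cauchy--Schwarz and the (discrete and continuous) continuity bounds. The first two produce terms bounded by a constant times $\snorm{\usI-\us_\pi}{2,\Omega}\,\snorm{\dvh}{2,\Omega}$ and $\snorm{\us-\us_\pi}{2,\Omega}\,\snorm{\dvh}{2,\Omega}$, after which the triangle inequality $\snorm{\usI-\us_\pi}{2,\Omega} \leq \snorm{\us-\usI}{2,\Omega}+\snorm{\us-\us_\pi}{2,\Omega}$ completes the splitting; the third is controlled by the dual-norm bound $\bil{\fs-\fsh}{\dvh} \leq \norm{\fs-\fsh}{(\Vsh{\ks})'}\,\norm{\dvh}{\Vszr}$. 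Dividing through by $\snorm{\dvh}{2,\Omega}$ and using $\snorm{\us-\ush}{2,\Omega} \leq \snorm{\us-\usI}{2,\Omega}+\snorm{\dvh}{2,\Omega}$ gives~\eqref{eq:abstract:convg}. The main subtlety is that $\calA$ and $\calAh$ naturally control the full $\norm{\,\cdot\,}{\Vszr}$ norm (Laplacian, gradient, and $\LTWO$ contributions), while the right-hand side of~\eqref{eq:abstract:convg} lists only the $\HS{2}$-seminorms of the approximation errors; the justification is that the interpolation and polynomial approximation bounds of Lemmas~\ref{lemma:polyApprox} and~\ref{lemma:VEM:interpolant} give decay of order $\hh^{\delta-\ell}$ for $\ell=0,1,2$, so that the lower-order seminorm contributions on the right-hand side are dominated by the $\HS{2}$-seminorm terms (for $\hh\leq 1$) and can be absorbed into the generic constant $\Cs$.
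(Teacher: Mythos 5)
Your proposal is correct and follows essentially the same route as the paper's proof: the splitting $\us-\ush=(\us-\usI)+(\usI-\ush)$, coercivity/stability applied to $\calAh(\dvh,\dvh)$, insertion of $\us_\pi$ via the $\ks$-consistency property, use of the continuous and discrete equations to isolate the source term $\bil{\fs-\fsh}{\dvh}$, and continuity to conclude. Your extra remarks on $\dvh\in\HSzr{2}(\Omega)$, the equivalence of $\norm{\cdot}{\Vszr}$ with $\snorm{\cdot}{2,\Omega}$, and the absorption of the lower-order seminorm contributions are welcome details that the paper leaves implicit.
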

\begin{proof}
  Let $u_I$ be the interpolant of $\us$ that
  satisfies Lemma~\ref{lemma:VEM:interpolant}, and set
  $\delta_h:=\ush-\usI\in\Vsh{\ks}$.
  Adding and subtracting $\usI$ and using the Cauchy-Schwarz
  inequality yield
  \begin{align*}
    \snorm{\us-\ush}{2,\Omega} \leq \snorm{\us-\usI}{2,\Omega} +
    \snorm{\delta_h}{2,\Omega}.
  \end{align*}
  Lemma~\ref{lemma:VEM:interpolant} provides the upper bound for the
  first term on the right-hand side, i.e.,
  $\snorm{\us-\usI}{2,\Omega}$.
  Furthermore, by using the coercivity of $\calAh(\cdot,\cdot)$ with
  coercivity constant $\Cs_{\alpha}=\min(\alpha_0,\alpha_1,\alpha_2)$,
  the definition of $\delta_h$, equations~\eqref{eq:varform}
  and~\eqref{eq:VEM}, and the polynomial consistency of
  $\calAh(\cdot,\cdot)$, we find that
  \begin{align*}
    \Cs_{\alpha}\snorm{\delta_h}{2,\Omega}^2
    & \leq\calAh(\delta_h,\delta_h)
    =     \calAh(\ush,\delta_h) - \calAh(\usI,\delta_h) \\
    &= \fsh(\delta_h)
    - \sum_{\P\in\Th} \calAhP(\usI-\us_{\pi}, \delta_h)
    - \sum_{\P\in\Th} \calAhP(\us_{\pi},\delta_h) \\
    &= \fsh(\delta_h) - \fs(\delta_h)
    - \sum_{\P\in\Th} \calAhP(\usI - \us_{\pi},\delta_h)
    + \sum_{\P\in\Th} \calAP(\us-\us_{\pi},\delta_h).
  \end{align*}
  We conclude the proof of inequality~\eqref{eq:abstract:convg} by
  employing the continuity of $\calAhP(\cdot,\cdot)$ and
  $\calAP(\cdot,\cdot)$.
\end{proof}

The following theorem states the convergence of the virtual element
method and an error estimate for the approximation error measured
using the seminorm $\snorm{\,\cdot\,}{2,\Omega}$ (which is a norm on
$\HSzr{2}(\Omega)$).
\begin{theorem}[Convergence in $\HS{2}$-seminorm]
  \label{ThmH2}
  Let $\us$ be the solution of the variational
  problem~\eqref{eq:varform} with $\fs\in\HS{k-1}(\Omega)$ and $\ush$
  the solution of the virtual element method~\eqref{eq:VEM} under mesh
  regularity assumptions $\mathbf{(A1)}$-$\mathbf{(A2)}$.
  Then, there exists a positive constant $\Cs>0$ such that
  \begin{align*}
    \snorm{\us-\ush}{2,\Omega}
    \leq \Cs\hs^{k-1} \Big( \snorm{\us}{k+1,\Omega} + \norm{\fs}{k-1,\Omega} \Big).
  \end{align*}
  The constant $\Cs$ is independent of $\hh$, but depends on the
  $\alpha$-coefficients associated with the model problem, the mesh
  regularity constant $\rho$, and the stability constants of the
  bilinear form $\calAh(\cdot,\cdot)$.
\end{theorem}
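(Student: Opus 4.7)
The plan is to apply Lemma~\ref{lemma:abstract:convergence} directly, reducing the $\HS{2}$-seminorm error into three pieces: the virtual element interpolation error $\snorm{\us-\usI}{2,\Omega}$, the piecewise polynomial projection error $\snorm{\us-\us_{\pi}}{2,\Omega}$, and the dual-norm load residual $\norm{\fs-\fsh}{(\Vsh{\ks})'}$. Under the stated regularity $\us\in\HS{\ks+1}(\Omega)$ and $\fs\in\HS{\ks-1}(\Omega)$, I would bound each contribution separately and combine.

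For the first two terms, I would invoke Lemma~\ref{lemma:VEM:interpolant} and Lemma~\ref{lemma:polyApprox} elementwise with $\delta=\ks+1$ and $\ell=2$. Squaring the local bounds, summing over $\P\in\Th$, extracting $\max_{\P}\hP\leq\hh$, and taking the square root yields the clean estimate $\snorm{\us-\usI}{2,\Omega}+\snorm{\us-\us_{\pi}}{2,\Omega}\leq\Cs\hh^{\ks-1}\snorm{\us}{\ks+1,\Omega}$. This step is routine and uses only the mesh regularity assumptions $\mathbf{(A1)}$--$\mathbf{(A2)}$ through the hidden constants of the two approximation lemmas.

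The main work is bounding the load residual. Using the definition~\eqref{eq:VEM:RHS:def} of $\fsh$, I would rewrite $\bil{\fs-\fsh}{\vsh}=\sum_{\P\in\Th}\int_{\P}\fs\,(\vsh-\PizP{\ks-2}\vsh)\dxv$, then exploit the $\LTWO$-orthogonality of $\PizP{\ks-2}$ to insert $-\PizP{\ks-2}\fs$, obtaining $\sum_{\P}\int_{\P}(\fs-\PizP{\ks-2}\fs)(\vsh-\PizP{\ks-2}\vsh)\dxv$. Applying elementwise Cauchy--Schwarz and Lemma~\ref{lemma:polyApprox}, the first factor is controlled by $\Cs\hP^{\ks-1}\snorm{\fs}{\ks-1,\P}$ (with $\delta=\ks-1$, $\ell=0$), while for the second, since only $\vsh\in\HS{2}$ is available, I use $\Cs\hP\snorm{\vsh}{1,\P}$ (with $\delta=1$, $\ell=0$). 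A global Cauchy--Schwarz in the element sum, followed by the Poincar\'e--Friedrichs inequality $\snorm{\vsh}{1,\Omega}\leq\Cs\snorm{\vsh}{2,\Omega}$, valid because $\vsh\in\HSzr{2}(\Omega)$, delivers $\norm{\fs-\fsh}{(\Vsh{\ks})'}\leq\Cs\hh^{\ks}\norm{\fs}{\ks-1,\Omega}$, which is in fact one order sharper than needed.

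The main technical obstacle is keeping the dependencies on $\hP$ versus $\hh$ clean in the load-term manipulation, and verifying that $\delta=\ks-1\geq 0$ so that Lemma~\ref{lemma:polyApprox} is applicable to $\fs$; this holds since $\ks\geq 2$. Plugging the three bounds into Lemma~\ref{lemma:abstract:convergence} yields the stated estimate $\Cs\hh^{\ks-1}\bigl(\snorm{\us}{\ks+1,\Omega}+\norm{\fs}{\ks-1,\Omega}\bigr)$, with a constant $\Cs$ depending on $\rho$, on the coefficients $\alpha_0,\alpha_1,\alpha_2$, and on the stability constants of $\calAh(\cdot,\cdot)$, but independent of $\hh$.
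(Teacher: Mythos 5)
Your proposal is correct and follows essentially the same route as the paper: the paper's proof simply invokes Lemma~\ref{lemma:abstract:convergence} together with Lemmas~\ref{lemma:polyApprox} and~\ref{lemma:VEM:interpolant} and a ``standard estimate'' of $\norm{\fs-\fsh}{(\Vsh{\ks})'}$, which is exactly the decomposition you carry out. The only difference is that you spell out the load-residual bound (orthogonality of $\PizP{\ks-2}$, elementwise Cauchy--Schwarz, Poincar\'e) that the paper leaves to a citation, and your resulting $\ORDER{\hh^{\ks}}$ bound for that term is consistent with, indeed sharper than, what the theorem requires.
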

\begin{proof}
  The assertion of the theorem is a straightforward consequence of the
  abstract convergence result stated in
  Lemma~\ref{lemma:abstract:convergence}, the error bounds from
  Lemmas~\ref{lemma:polyApprox} and~\ref{lemma:VEM:interpolant}, and a
  standard estimate of the source error term
  $\norm{\fs-\fsh}{(\Vsh{\rs})'}$ in the right-hand side of
  inequality~\eqref{eq:abstract:convg} along with the regularity
  of $\fs$
  (cf.~\cite{BeiraodaVeiga-Brezzi-Cangiani-Manzini-Marini-Russo:2013}).
\end{proof}
\begin{theorem}[Convergence in $\HS{1}$-seminorm]
  \label{ThmH1}
  Let $\us$ be the solution of the variational
  problem~\eqref{eq:varform} with $\fs\in\HS{k-1}(\Omega)$ and $\ush$
  the solution of the virtual element method~\eqref{eq:VEM} under mesh
  regularity assumptions $\mathbf{(A1)}$-$\mathbf{(A2)}$.
  Then, there exists a positive constant $\Cs>0$, such that
  \begin{align}
    \snorm{ \us-\ush }{1,\Omega}
    \leq \Cs\hh^k \Big( \snorm{\us}{k+1,\Omega} + \norm{\fs}{k-1,\Omega} \Big).
    \label{eq:ThmH1}
  \end{align}
  The constant $\Cs$ is independent of $\hh$, but depends on the
  $\alpha$-coefficients associated with the model problem, the mesh
  regularity constant $\rho$, and the stability constants of the
  bilinear form $\calAh(\cdot,\cdot)$.
\end{theorem}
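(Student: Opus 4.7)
The plan is to use an Aubin--Nitsche duality argument that lifts the $\HS{2}$-seminorm estimate of Theorem~\ref{ThmH2} down to the $\HS{1}$-seminorm at the price of one extra power of $\hh$. Since $\us-\ush\in\HSzr{2}(\Omega)\subset\HSzr{1}(\Omega)$, Poincaré's inequality gives $\snorm{\us-\ush}{1,\Omega}\leq\Cs\norm{\us-\ush}{1,\Omega}$, and the $\HS{-1}$-$\HSzr{1}$ duality characterization yields
\begin{equation*}
\norm{\us-\ush}{1,\Omega}=\sup_{g\in\HS{-1}(\Omega),\,\norm{g}{-1,\Omega}=1}\langle g,\us-\ush\rangle.
\end{equation*}
For each such $g$, I would introduce the dual problem of finding $\xi\in\HSzr{2}(\Omega)$ such that $\calA(\vs,\xi)=\langle g,\vs\rangle$ for every $\vs\in\HSzr{2}(\Omega)$. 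By symmetry of $\calA$ and the first regularity result~\eqref{eq:regularity:1}, $\xi$ belongs to $\HS{3}(\Omega)$ with $\norm{\xi}{3,\Omega}\leq\Cs\norm{g}{-1,\Omega}$.

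Next, I would write $\langle g,\us-\ush\rangle=\calA(\us-\ush,\xi)$ and, letting $\xi_I\in\Vsh{\ks}$ denote the virtual element interpolant of $\xi$ from Lemma~\ref{lemma:VEM:interpolant}, perform the standard VEM splitting
\begin{equation*}
\calA(\us-\ush,\xi)
=\calA(\us-\ush,\xi-\xi_I)
+\bigl[\Fs(\xi_I)-\bil{\fsh}{\xi_I}\bigr]
+\bigl[\calAh(\ush,\xi_I)-\calA(\ush,\xi_I)\bigr],
\end{equation*}
obtained by adding and subtracting $\calA(\us,\xi_I)=\Fs(\xi_I)$ and $\calAh(\ush,\xi_I)=\bil{\fsh}{\xi_I}$. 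The first summand is controlled via continuity of $\calA$ by $\snorm{\us-\ush}{2,\Omega}\snorm{\xi-\xi_I}{2,\Omega}$; Theorem~\ref{ThmH2} together with Lemma~\ref{lemma:VEM:interpolant} (applied with $\delta=3$ and $\ell=2$) gives a contribution of order $\hh^{\ks-1}\cdot\hh=\hh^\ks$. The loading term is rewritten, via $\LS{2}$-orthogonality of $\PizP{\ks-2}$, as $\sum_{\P\in\Th}\int_{\P}(\fs-\PizP{\ks-2}\fs)(\xi_I-\PizP{\ks-2}\xi_I)\dxv$; the polynomial projection bound for $\fs\in\HS{\ks-1}(\Omega)$ combined with the $\LS{2}$-interpolation estimate for $\xi\in\HS{3}(\Omega)$ delivers an $\ORDER{\hh^{\ks}}$ factor for every $\ks\geq2$. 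The consistency term is treated in the usual VEM way: inserting piecewise polynomial approximations $\us_\pi,\xi_\pi\in\PS{\ks}(\Th)$ from Lemma~\ref{lemma:polyApprox} and invoking $\ks$-consistency~\eqref{eq:poly:r-consistency} on both arguments reduces it to $\calAh(\ush-\us_\pi,\xi_I-\xi_\pi)-\calA(\ush-\us_\pi,\xi_I-\xi_\pi)$, which by stability~\eqref{eq:poly:stability} and continuity is bounded by $\Cs\bigl(\snorm{\us-\ush}{2,\Omega}+\snorm{\us-\us_\pi}{2,\Omega}\bigr)\bigl(\snorm{\xi-\xi_I}{2,\Omega}+\snorm{\xi-\xi_\pi}{2,\Omega}\bigr)=\ORDER{\hh^\ks}$.

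Collecting the three $\ORDER{\hh^\ks}$ estimates, each with constant depending on $\snorm{\us}{\ks+1,\Omega}$, $\norm{\fs}{\ks-1,\Omega}$, and $\norm{g}{-1,\Omega}=1$, and taking the supremum over $g$ in the unit ball of $\HS{-1}(\Omega)$, yields~\eqref{eq:ThmH1}. I expect the main technical obstacle to be the consistency term $\calAh(\ush,\xi_I)-\calA(\ush,\xi_I)$: since $\calAh$ does not agree with $\calA$ on virtual (non-polynomial) arguments, one must reduce both slots to polynomial factors via Lemma~\ref{lemma:polyApprox} before invoking $\ks$-consistency, and then exploit the stability bound~\eqref{eq:poly:stability} on the virtual residual together with the continuity of $\calA$. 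The fact that the dual solution enjoys only $\HS{3}(\Omega)$ regularity under~\eqref{eq:regularity:1} means that Lemma~\ref{lemma:VEM:interpolant} supplies exactly one extra power of $\hh$ beyond Theorem~\ref{ThmH2}, which is precisely why the exponent in~\eqref{eq:ThmH1} is $\ks$ and not $\ks+1$.
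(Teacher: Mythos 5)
Your proposal is correct and follows essentially the same route as the paper: a Nitsche duality argument with the adjoint problem for $\alpha_2\Delta^2-\alpha_1\Delta+\alpha_0$, the $\HS{3}$ regularity bound~\eqref{eq:regularity:1}, interpolation of the dual solution, and the identical three-term splitting into $\calA(\us-\ush,\xi-\xiI)$, the load-consistency term, and the bilinear-form consistency term, each bounded exactly as in the paper. The only cosmetic difference is that you take the supremum over unit-norm $g\in\HS{-1}(\Omega)$, whereas the paper specializes the dual datum to $-\Delta(\us-\ush)$ and uses $\snorm{\us-\ush}{1,\Omega}^2=-(\Delta(\us-\ush),\us-\ush)_{0,\Omega}$.
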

\begin{proof}
  To prove the assertion of the theorem, we use the duality argument.
  Let $\xi$ be the solution of the variational formulation of the auxiliary
  problem
  \begin{subequations}
    \begin{align}
      \alpha_2\Delta^2\xi - \alpha_1\Delta\xi + \alpha_0\xi &= -\Delta(\us-\ush)\phantom{0} \quad\textrm{in~}\Omega,\label{eq:H1:dualProblem:A}\\
      \xi = \partial_n\xi                                  &= 0\phantom{-\Delta(\us-\ush)} \quad\textrm{on~}\Gamma.\label{eq:H1:dualProblem:B}
    \end{align}
  \end{subequations}
  From suitable hypothesis on $\Omega$, and a standard regularity
  argument, see~\cite{Grisvard:1985}
  and~\eqref{eq:regularity:1}-\eqref{eq:regularity:2}, we know that
  \begin{align}
    \norm{\xi}{3,\Omega}
    \leq \Cs \norm{\Delta(\us-\ush)}{-1}
    \leq \Cs \norm{\us-\ush}{1,\Omega}
    \leq \Cs \snorm{\us-\ush}{1,\Omega},
    \label{eq:H1:dualProblem:05}
  \end{align}
  where the last step follows from the Poincare inequality, i.e.,
  $\norm{\us-\ush}{0,\Omega}\leq\Cs\snorm{\us-\ush}{1,\Omega}$, which
  holds because $\us-\ush\in\HSzr{2}(\Omega)$.
  For $\xi$ at least in $\HS{3}(\Omega)$, the following interpolation
  estimate holds:
  \begin{align}
    \norm{\xi-\xiI}{0,\Omega} + \hP\snorm{\xi-\xiI}{1,\Omega} + \hP^2\snorm{\xi-\xiI}{2,\Omega} \leq
    \Cs\hP^3\snorm{\xi}{3,\Omega}.
    \label{eq:xi:interpolation:error}
  \end{align}
  
  Note that
  $\snorm{\us-\ush}{1,\Omega}^2=-(\Delta(\us-\ush),\us-\ush)_{0,\Omega}$
  since the traces of $(\us-\ush)$ and $\partial_n(\us-\ush)$ are zero
  on the boundary of $\Omega$.
  We multiply both sides of~\eqref{eq:H1:dualProblem:A} by $\us-\ush$,
  integrate over $\Omega$ and integrate by parts the $\Delta^2$ and
  $\Delta$ terms.
  Then, we add and subtract the virtual element interpolant
  $\xiI\in\Vsh{\rs}$, cf. Lemma~\ref{lemma:VEM:interpolant}, and
  we obtain
  \begin{align}
    \snorm{\us-\ush}{1,\Omega}^2
    &= -\big(\Delta(\us-\ush),\us-\ush\big)_{0,\Omega}                              \nonumber\\
    &= \big(\alpha_2\Delta^2\xi - \alpha_1\Delta\xi + \alpha_0\xi, \us-\ush\big)   \nonumber\\
    &= \alpha_2\astwo(\us-\ush,\xi) + \alpha_1\asone(\us-\ush,\xi) + \alpha_0\aszer(\us-\ush,\xi), \nonumber\\
    &= \calA(\us-\ush,\xi-\xiI) + \calA(\us-\ush,\xiI) \nonumber\\
    &= \calA(\us-\ush,\xi-\xiI)
    + \Big( (\fs,\xiI) - (\fsh,\xiI) \Big)
    + \Big( \calAh(\ush,\xiI) - \calA(\ush,\xiI) \Big)\nonumber\\
    &= \TERM{T1} + \TERM{T2} + \TERM{T3}.
    \label{eq:main:H1}
  \end{align}
  To complete the proof of inequality~\eqref{eq:ThmH1}, we derive an
  upper bound for the three terms $\TERM{T1}$, $\TERM{T2}$,
  $\TERM{T3}$ separately.

  \medskip
  \noindent
  We bound the first term as follows below by using the continuity of
  the bilinear forms $\aszer(\cdot,\cdot)$, $\asone(\cdot,\cdot)$, and
  $\astwo(\cdot,\cdot)$, and~\eqref{eq:xi:interpolation:error}
  and~\eqref{eq:H1:dualProblem:05}:
  \begin{align*}
    &\TERM{T1}
    = \calA(\us-\ush,\xi-\xiI)
    =
    \alpha_2\astwo(\us-\ush,\xi-\xiI) +
    \alpha_1\asone(\us-\ush,\xi-\xiI) +
    \alpha_0\aszer(\us-\ush,\xi-\xiI) \\
    &\quad\leq
    \Cs(\alpha_2)\snorm{\us-\ush}{2,\Omega}\,\snorm{\xi-\xiI}{2,\Omega} +
    \Cs(\alpha_1)\snorm{\us-\ush}{1,\Omega}\,\snorm{\xi-\xiI}{1,\Omega} +
    \Cs(\alpha_0)\norm {\us-\ush}{0,\Omega}\,\norm {\xi-\xiI}{0,\Omega} \\
    &\quad\leq
    \Cs(\alpha_2)\hh^k\snorm{\us}{k+1}\,\snorm{\us-\ush}{1,\Omega} +
    \Cs(\alpha_1)\hh^2\snorm{\us-\ush}{1,\Omega}^2 +
    \Cs(\alpha_0)\hh^3\snorm{\us-\ush}{1,\Omega}^2.
  \end{align*}

  \medskip
  \noindent
  Since we assume that $\fs\in\HS{k-1}(\Omega)$, we bound the second
  term by noting that for all $k\geq2$, the quantity
  $(\fs-\PizP{k-2}\fs)$ is $\LS{2}$-orthogonal to the constant
  functions, e.g., $\PizP{0}\xiI$.
  Therefore, using the Cauchy-Schwarz inequality, the approximation
  properties of the interpolation operator, the $\LS{2}$-orthogonal
  projection operator, and~\eqref{eq:xi:interpolation:error}
  and~\eqref{eq:H1:dualProblem:05} yield:
  \begin{align}
    \TERM{T2}
    & \leq \sum_{\P\in\Th} \int_{\P}\big(\fs-\PizP{k-2}\fs\big)\,\big(\xiI-\PizP{0}\xiI\big)\dxv 
      \leq \norm{\fs-\Piz{k-2}\fs}{0,\Omega}\,\norm{\xiI-\Piz{0}\xiI}{0,\Omega} \nonumber\\
    & \leq \Cs\hh^{k-1}\snorm{\fs}{k-1,\Omega}\,\Big( \norm{\xiI -\xi}{0,\Omega} + \norm{ \xi -\Piz{0}\xi }{0,\Omega} + \norm{ \Piz{0}(\xi-\xiI) }{0,\Omega}\Big) \nonumber\\
    & \leq \Cs\hh^{k}\snorm{\fs}{k-1,\Omega}\,\snorm{\us-\ush}{1,\Omega}.
    \label{eq:RHS:error:bound}
  \end{align}
  
  \medskip
  \noindent
  Finally, we bound the term $\TERM{T3}$ by using the (local)
  polynomial consistency, which allows us to rewrite the term
  $\calAh(\cdot,\cdot)$ as follows
  \begin{equation}
    \TERM{T3} = \sum_{\P\in\Th}\Big( \calAhP(\ush-\us_{\pi},\xiI-\xi_{\pi}) + \calAP(\us_{\pi}-\ush,\xiI-\xi_{\pi}) \Big).
  \end{equation}
  An application of the continuity property of $\calAh(\cdot,\cdot)$
  and $\calA(\cdot,\cdot)$, the approximation properties of
  Lemmas~\ref{lemma:polyApprox}, and~\ref{lemma:VEM:interpolant},
  and~\eqref{eq:xi:interpolation:error} yield
 \begin{align*}
   \ABS{\TERM{T3}}
   & \leq \Cs(\beta^{\ast})
   \big[\calA(\ush-\us_{\pi},\ush-\us_{\pi})\big]^{\half}\,
   \big[\calA(\xiI-\xi_{\pi},\xiI-\xi_{\pi})\big]^{\half} \\[0.25em]
   & \leq \Cs(\beta^{\ast})\,\Big[
     \alpha_2\snorm{\ush-\us_{\pi}}{2,\Omega}^2 +
     \alpha_1\snorm{\ush-\us_{\pi}}{1,\Omega}^2 +
     \alpha_0\snorm{\ush-\us_{\pi}}{0,\Omega}^2 \Big]^{\half}\\[0.25em]
   & \hspace{10mm} \times\Big[
     \alpha_2\snorm{\xiI-\xi_{\pi}}{2,\Omega}^2 +
     \alpha_1\snorm{\xiI-\xi_{\pi}}{1,\Omega}^2 +
     \alpha_0\snorm{\xiI-\xi_{\pi}}{0,\Omega}^2 \Big]^{\half}\\[0.25em]
   &\leq
   \Cs(\beta^{\ast},\alpha_2,\alpha_1,\alpha_0)\hh^k\snorm{\us}{k+1,\Omega}
   \Big[ \alpha_2 + \alpha_1\hh^2 + \alpha_0\hh^3 \Big]^{\half}\snorm{\us-\ush}{1,\Omega}.
 \end{align*}
 Finally, we substitute the upper bounds of terms $\TERM{T1}$,
 $\TERM{T2}$, and $\TERM{T3}$ into \eqref{eq:main:H1}, and obtain the
 assertion of the lemma for $\hh\to0$.
\end{proof}

Now, we focus to derive the convergence analysis in the $\LTWO$ norm.
\begin{theorem}[Convergence in $\LS{2}$-norm]
  \label{ThmL2}
  Let $\us$ be the solution of the variational
  problem~\eqref{eq:varform} with $\fs\in\HS{k-1}(\Omega)$ and $\ush$
  the solution of the virtual element method~\eqref{eq:VEM} under mesh
  regularity assumptions~\ref{Mesh:regularity} (as in
  Theorem~\ref{ThmH2}).
  Then, the following estimates holds
  \begin{subequations}
    \begin{align}
      \norm{\us-\ush}{0,\Omega} &\leq \Cs\hh^2    \Big( \snorm{\us}{3,\Omega}   + \norm{\fs}{1,\Omega}   \Big) && \text{for}~k=2     \label{eq:L2Estimate:1} \\[0.5em]
      \norm{\us-\ush}{0,\Omega} &\leq \Cs\hh^{k+s} \Big( \snorm{\us}{k+1,\Omega} + \norm{\fs}{k-1,\Omega} \Big) && \text{for}~k\geq 3 \label{eq:L2Estimate:2},
    \end{align}
  \end{subequations}
  where $s$ is the regularity index of the function $u$ as mentioned
  in \eqref{eq:regularity:2}.
\end{theorem}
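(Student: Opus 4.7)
The plan is to carry out an Aubin--Nitsche duality argument parallel to the one used in Theorem~\ref{ThmH1}, but exploiting the improved regularity \eqref{eq:regularity:2} of the dual solution that is available when its right-hand side lies in $\LTWO(\Omega)$. Concretely, I would introduce the auxiliary problem that coincides with \eqref{eq:H1:dualProblem:A}--\eqref{eq:H1:dualProblem:B}, but with right-hand side $\us-\ush$ in place of $-\Delta(\us-\ush)$. By \eqref{eq:regularity:2}, the resulting dual solution satisfies $\xi\in\HS{3+s}(\Omega)$ together with the bound $\norm{\xi}{3+s,\Omega}\leq\Cs\norm{\us-\ush}{0,\Omega}$.

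Next, I would test the strong form of this adjoint equation against $\us-\ush$, integrate by parts twice using $\us-\ush\in\HSzr{2}(\Omega)$, and add and subtract the virtual element interpolant $\xiI\in\Vsh{\ks}$ supplied by Lemma~\ref{lemma:VEM:interpolant}. Invoking the equations satisfied by $\us$ and $\ush$ and the $\ks$-consistency of $\calAh(\cdot,\cdot)$ produces the same three-term decomposition $\TERM{T1}+\TERM{T2}+\TERM{T3}$ as in \eqref{eq:main:H1}, only with $\norm{\us-\ush}{0,\Omega}^2$ on the left-hand side. The heart of the argument is the bound for the dominant term $\TERM{T1}=\calA(\us-\ush,\xi-\xiI)$. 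For $\ks\geq3$, the constraint $3+s\leq\ks+1$ is satisfied, so Lemma~\ref{lemma:VEM:interpolant} applied with $\delta=3+s$ gives $\snorm{\xi-\xiI}{2,\Omega}\leq\Cs\hh^{1+s}\norm{\xi}{3+s,\Omega}$; combined with Theorem~\ref{ThmH2} this delivers $\TERM{T1}\leq\Cs\hh^{\ks+s}\snorm{\us}{\ks+1,\Omega}\norm{\us-\ush}{0,\Omega}$. For $\ks=2$, instead, the interpolation estimate is capped at $\delta=3$, so $\snorm{\xi-\xiI}{2,\Omega}\leq\Cs\hh\norm{\xi}{3,\Omega}$ and the rate degrades to $\hh^2$ as in \eqref{eq:L2Estimate:1}; in this case one invokes \eqref{eq:regularity:1} together with the embedding $\norm{\us-\ush}{-1,\Omega}\leq\Cs\norm{\us-\ush}{0,\Omega}$ to control $\norm{\xi}{3,\Omega}$.

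For $\TERM{T2}$ and $\TERM{T3}$, I would replicate the strategy already deployed in the proof of Theorem~\ref{ThmH1}: $\TERM{T2}$ is bounded by inserting the piecewise $\LTWO$-orthogonal projection of $\fs$ onto $\PS{\ks-2}(\Th)$ and exploiting its orthogonality to a suitable polynomial approximation of $\xiI$, while $\TERM{T3}$ is handled by the stability of $\calAh(\cdot,\cdot)$ combined with Lemmas~\ref{lemma:polyApprox} and~\ref{lemma:VEM:interpolant}. A routine tracking of the exponents of $\hh$ in these two terms shows that neither of them decays more slowly than $\TERM{T1}$ in either regime, so absorbing a factor of $\norm{\us-\ush}{0,\Omega}$ into the left-hand side yields both \eqref{eq:L2Estimate:1} and \eqref{eq:L2Estimate:2}. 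The main obstacle I anticipate is the clean separation of the cases $\ks=2$ and $\ks\geq3$: the upper bound $\delta\leq\ks+1$ in Lemma~\ref{lemma:VEM:interpolant} prevents the quadratic scheme from benefiting from the extra $s$-power of smoothness of $\xi$, and this algebraic constraint is precisely what forces the two distinct rates stated in \eqref{eq:L2Estimate:1}--\eqref{eq:L2Estimate:2}. A secondary subtlety is tracking the low-order contribution $\alpha_0\aszer(\us-\ush,\xi-\xiI)$ inside $\TERM{T1}$ with the $\LTWO$ interpolation estimate, to check that it does not spoil the rate.
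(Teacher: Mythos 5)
For the main case $k\geq3$ your plan coincides with the paper's own proof: the same dual problem with datum $\us-\ush$ (the paper's displayed equation \eqref{eq:L2:dualProblem:A} still carries the $-\Delta(\us-\ush)$ right-hand side copied from the $\HS{1}$ proof, but the subsequent identity $\norm{\us-\ush}{0,\Omega}^2=(\us-\ush,\us-\ush)$ and the regularity bound \eqref{eq:interpo:xi} make clear that $\us-\ush$ is what is meant, exactly as you propose), the same decomposition into $\TERM{T1}+\TERM{T2}+\TERM{T3}$, the same use of $\norm{\xi}{3+s,\Omega}\leq\Cs\norm{\us-\ush}{0,\Omega}$ together with Lemma~\ref{lemma:VEM:interpolant} at $\delta=3+s$ to gain the factor $\hh^{1+s}$ in $\TERM{T1}$, and the same handling of $\TERM{T2}$ (orthogonality of $\fs-\Piz{k-2}\fs$ to linear polynomials) and $\TERM{T3}$ (consistency plus stability plus the approximation lemmas). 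The only divergence is at $k=2$: the paper obtains \eqref{eq:L2Estimate:1} in one line from Theorem~\ref{ThmH1} and the Poincar\'e inequality, whereas you rerun the duality argument with $\xi$ only in $\HS{3}(\Omega)$; your route also delivers the $\hh^{2}$ rate, and your diagnosis that the cap $\delta\leq k+1$ in Lemma~\ref{lemma:VEM:interpolant} is what prevents the quadratic scheme from seeing the extra $\hh^{s}$ is correct, but the paper's shortcut is cheaper.
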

\begin{proof}
  Estimate \eqref{eq:L2Estimate:1} directly follows from an
  application of Theorem~\ref{ThmH1}, Eq.~\eqref{eq:ThmH1}, and the
  Poincar\'e inequality
  $\norm{\us-\ush}{0,\Omega}\leq\Cs\snorm{\us-\ush}{1,\Omega}$, which
  holds since $\us-\ush\in\HSzr{2}(\Omega)$.
  To prove inequality~\eqref{eq:L2Estimate:2}, we use the duality argument.
  Let $\xi\in\HSzr{2}(\Omega)$ be the solution of the variational formulation of the auxiliary
  problem
  \begin{subequations}
    \begin{align}
      \alpha_2\Delta^2\xi - \alpha_1\Delta\xi + \alpha_0\xi &= -\Delta(\us-\ush)\phantom{0} \quad\textrm{in~}\Omega,
      \label{eq:L2:dualProblem:A}\\
      \xi = \partial_n\xi                                  &= 0\phantom{-\Delta(\us-\ush)} \quad\textrm{on~}\Gamma.
      \label{eq:L2:dualProblem:B}
    \end{align}
  \end{subequations}
  According to~\cite{Grisvard:1992,Chinosi-Marini:2016}, we deduce the
  following regularity result:
  \begin{align}
    \norm{\xi}{3+s} \leq \Cs\norm{\us-\ush}{0,\Omega}
    \label{eq:interpo:xi}
  \end{align}
  for some real number $s\in(1/2,1]$.
  We multiply \eqref{eq:L2:dualProblem:A} by $\us-\ush$ and integrate
  over the domain $\Omega$, and we find that
  \begin{align*}
    \norm{\us-\ush}{0,\Omega}^2
    = (\us-\ush,\us-\ush)
    = \Big( \alpha_2\Delta^2\xi - \alpha_1\Delta\xi + \alpha_0\xi, \us-\ush \Big).
  \end{align*}
  Using the same arguments of the derivation of \eqref{eq:main:H1}, we
  obtain
  \begin{align}
    \norm{ \us-\ush}{0,\Omega}^2
    &= \calA(\us-\ush,\xi-\xiI)
    + \Big( (\fs,\xiI) - (\fsh,\xiI) \Big)
    + \Big( \calAh(\ush,\xiI) - \calA(\ush,\xiI) \Big)
    \nonumber\\
    &= \TERM{T1} + \TERM{T2} + \TERM{T3}.
    \label{eq:L2:main}
  \end{align}

  Upon splitting the bilinear form $\calA(\cdot,\cdot)$ into the
  $\HS{2}$-, $\HS{1}$-, and $\LS{2}$-scalar products, using the
  continuity of the bilinear forms, and again the interpolation
  estimate for $\xi-\xiI$ and~\eqref{eq:interpo:xi}, we obtain that
  \begin{align}
    \TERM{T1}
    &= \alpha_2\astwo(\us-\ush,\xi-\xiI) + \alpha_1\asone(\us-\ush,\xi-\xiI) + \alpha_0\aszer(\us-\ush,\xi-\xiI) \nonumber\\
    & \leq
    \Cs(\alpha_2)\hh^{k+s}\snorm{\us}{k+1,\Omega}\,\norm{\us-\ush}{0,\Omega} + 
    \Cs(\alpha_1)\hh^{k+s+1}\snorm{\us}{k+1,\Omega}\,\norm{\us-\ush}{0,\Omega} \nonumber\\
    &\quad
    +\Cs(\alpha_0)\hh^{k+s+2} \norm{\us}{k+1,\Omega}\,\norm{\us-\ush}{0,\Omega}\nonumber\\
    & \leq \hh^{k+s} \Big(
    \Cs(\alpha_2) \snorm{\us}{k+1,\Omega} +
    \Cs(\alpha_1) \hh  \snorm{\us}{k+1,\Omega} +
    \Cs(\alpha_0) \hh^2\snorm{\us}{k+1,\Omega} \Big) \norm{\us-\ush}{0,\Omega}.
    \label{L2:T1}
  \end{align}
  Since we assume that $\fs\in\HS{k-1}(\Omega)$, we bound the second
  term by noting that for all $k\geq3$, the quantity
  $(\fs-\PizP{k-2}\fs)$ is $\LS{2}$-orthogonal to the linear polynomial
  functions, e.g., $\PizP{1}\xiI$.
  Therefore, using the Cauchy-Schwarz inequality, the approximation
  properties of the interpolation operator, the $\LS{2}$-orthogonal
  projection operator, and~\eqref{eq:xi:interpolation:error}
  and~\eqref{eq:H1:dualProblem:05} yield:
  \begin{align}
    \TERM{T2}
    & \leq \sum_{\P\in\Th} \int_{\P}\big(\fs-\PizP{k-2}\fs\big)\,\big(\xiI-\PizP{1}\xiI\big)\dxv 
      \leq \norm{\fs-\Piz{k-2}\fs}{0,\Omega}\,\norm{\xiI-\Piz{1}\xiI}{0,\Omega} \nonumber\\
    & \leq \Cs\hh^{k-1}\snorm{\fs}{k-1,\Omega}\,\Big( \norm{\xiI -\xi}{0,\Omega} + \norm{ \xi -\Piz{1}\xi }{0,\Omega} + \norm{ \Piz{1}(\xi-\xiI) }{0,\Omega}\Big) \nonumber\\
    & \leq \Cs\hh^{k}\snorm{\fs}{k-1,\Omega}\,\norm{\us-\ush}{0,\Omega}.
    \label{L2:T2}
  \end{align}
  Finally, we proceed to estimate last term $\TERM{T3}$.
  To this end, we employ the polynomial consistency property of the
  discrete bilinear forms, the polynomial approximation property of
  Lemma~\ref{lemma:polyApprox}, and the approximation property of the
  interpolation operator of Lemma~\ref{lemma:VEM:interpolant}, and we
  deduce
  \begin{align}
    \TERM{T3}
    &\leq \Cs(\beta^{\ast}) \Big[
      \alpha_2\snorm{\ush-\us_{\pi}}{2,\Omega}^2 +
      \alpha_1\snorm{\ush-\us_{\pi}}{1,\Omega}^2 +
      \alpha_0\snorm{\ush-\us_{\pi}}{0,\Omega}^2 \Big ]^{\half}\nonumber\\
    & \quad\quad \times\Big[
      \alpha_2\snorm{\xiI-\xi_{\pi}}{2,\Omega}^2 +
      \alpha_1\snorm{\xiI-\xi_{\pi}}{1,\Omega}^2 +
      \alpha_0\snorm{\xiI-\xi_{\pi}}{0,\Omega}^2 \Big ]^{\half}\nonumber\\
    &\leq \Cs(\beta^{\ast}) \Big[
      \alpha_2\snorm{\ush-\us_{\pi}}{2,\Omega}^2 +
      \alpha_1\snorm{\ush-\us_{\pi}}{1,\Omega}^2 +
      \alpha_0\snorm{\ush-\us_{\pi}}{0,\Omega}^2 \Big ]^{\half}\nonumber\\
    & \qquad \times\hh^{1+s}\Big[ \alpha_2 + \alpha_1\hh^2 + \alpha_0\hh^{4}\Big]^{\half} \norm{\xi}{3+s,\Omega}\nonumber\\
    &\leq \Cs\big(\beta^{\ast},\alpha_2,\alpha_1,\alpha_0\big) \hh^{k+s}\snorm{\us}{k+1}\,\norm{\us-\ush}{0,\Omega}.
    \label{L2:T3}
  \end{align}
  Upon inserting the estimates of \eqref{L2:T1}, \eqref{L2:T2}, and
  \eqref{L2:T3} into \eqref{eq:L2:main}, we obtain the required result
  for $\hh\to0$.
\end{proof}




\section{Implementation}
\label{sec5:implementation}

In this section, we briefly describe how we implemented the VEM.
The approach follows the general guidelines
of~\cite{BeiraodaVeiga-Brezzi-Marini-Russo:2014}, here adapted to the
biharmonic problem.

\subsection{Vector and matrix notation}

We consider the following compact notation.
For all element $\P\in\Th$, we locally number the degrees of freedom
\DOFS{D1}, \DOFS{D2}, \DOFS{D3}, and \DOFS{D4} from $1$ to $\DOFcard$.
Then, we introduce the bounded, linear functionals
$\text{dof}_i:\Vsh{\ks}(\P)\rightarrow\mathbb{R}$,
$i=1,\dots,\DOFcard$,
such that 
\begin{align*}
  \dofOp{i}{\vsh}~:=\text{~$i$-th~degree~of~freedom~of~}\vsh
\end{align*}
for $\vsh\in\Vsh{\ks}(\P)$.
Let $\Lambda_{\P}=\big\{\dofOp{i}{\cdot}\big\}_{i}$ denote the set of
such functionals and collect the degrees of freedom of $\vsh$ in the
vector
$\vvh=\big(\dofOp{1}{\vsh},\ldots,\dofOp{\DOFcard}{\vsh}\big)^T$.
Since the degrees of freedom \DOFS{D1}, \DOFS{D2}, \DOFS{D3}, and
\DOFS{D4} are unisolvent in $\Vsh{\ks}(\P)$, the triplet
$\big(\P,\Vsh{\ks}(\P),\Lambda_{\P}\big)$ is a finite element in the sense
of Ciarlet, cf.~\cite{Ciarlet:2002}.
This property implies the existence of a Lagrangian basis set
$\big\{\varphi_i\big\}_i$, with $\varphi_i\in\Vsh{\ks}(\P)$,
$i=1,\dots,\DOFcard$, which satisfies
\begin{align*}
  \dofOp{i}{\varphi_j} = \delta_{ij}, \quad i,j = 1,2,\dots,\DOFcard.
\end{align*}
We refer to the basis function set $\big\{\varphi_i\big\}_i$ as the
``canonical'' basis of $\Vsh{\ks}(\P)$.
We introduce the compact notation
\begin{align*}
  \VirtualSFVec (\xv) = \left( \varphi_1(\xv),\dots,\varphi_{\DOFcard}(\xv) \right)\transpose,
\end{align*}
and write the expansion of a virtual element function $\vsh$ on such a
basis set as
\begin{align*}
  \vsh(\xv) = \VirtualSFVec(\xv)\transpose\vvh = \sum_{i=1}^{\DOFcard}\dofOp{i}{\vsh}\varphi_i(\xv)
  \qquad\forall\xv\in\P.
\end{align*}

We also introduce a compact notation for the basis of the polynomial subspace
$\PS{\ks}(\P)\subset\Vsh{\ks}(\P)$, which reads as
\begin{align*}
  \MonomialVec(\xv) = \left( \ms_1(\xv), \dots, \ms_{\PScard}(\xv) \right)\transpose,
\end{align*}
where $\PScard$ is the cardinality of $\PS{\ks}(\P)$.
Since the polynomials $\ms_\alpha(\xv)$ are also virtual element
functions, we can expand them on the canonical basis $\VirtualSFVec$.
We express such expansions as
\begin{align*}
  \MonomialVec(\xv)\transpose
  = \VirtualSFVec(\xv)\transpose\DMatrix,
\end{align*}
where matrix $\DMatrix$ has size $\DOFcard\times\PScard$ and collects
all the expansion coefficients
\begin{align*}
  \Ds_{i\ell} = \dofOp{i}{\ms_\ell},
\end{align*}
so that
\begin{align*}
  \ms_{\ell}(\xv) = \sum_{i=1}^{\DOFcard} \varphi_i(\xv)\Ds_{i\ell}
  \quad\ell=1,\ldots,\PScard.
\end{align*}

\medskip
Following this notation, we also express the action of a differential operator $\calD$, e.g.,
$\calD=\Delta$ or $\calD=\nabla$,
in a entry-wise way, so that
\begin{align*}
\calD\VirtualSFVec(\xv) = \left( \calD\varphi_1(\xv),\dots,\calD\varphi_{\DOFcard}(\xv) \right)\transpose,
\end{align*}
and
\begin{align*}
  \calD\MonomialVec(\xv) = \left( \calD\ms_1(\xv), \dots, \calD\ms_{\PScard}(\xv) \right)\transpose.
\end{align*}
Similarly, we express the action of the
projectors $\PiLP{k}$, $\PinP{k}$, and $\PizP{k}$ on the canonical basis functions
$\VirtualSFVec$
and their expansion
on the polynomial basis $\MonomialVec$ as follows:
\begin{align*}
  \PiLP{k}\VirtualSFVec\transpose
  &= \left[ \PiLP{k}\varphi_1, \PiLP{k}\varphi_2, \ldots \PiLP{k}\varphi_{\DOFcard} \right]
  = \MonomialVec\transpose \matPiLP{k},\\[0.5em]
  \PinP{k}\VirtualSFVec\transpose
  &= \left[ \PinP{k}\varphi_1, \PinP{k}\varphi_2, \ldots \PinP{k}\varphi_{\DOFcard} \right]
  = \MonomialVec\transpose \matPinP{k},\\[0.5em]
  \PizP{k}\VirtualSFVec\transpose
  &= \left[ \PizP{k}\varphi_1, \PizP{k}\varphi_2, \ldots \PizP{k}\varphi_{\DOFcard} \right]
  = \MonomialVec\transpose \matPizP{k}.
\end{align*}
The expansion coefficients for the three projection operators applied
to the basis function $\varphi_j$ are collected along the $j$-th
column of the projection matrices $\matPiLP{k}$, $\matPinP{k}$,
$\matPizP{k}$.

\subsection{Elliptic projector operator}
Using this compact notation, we rewrite the variational
problem~\eqref{eq:proj:PilP:a}-\eqref{eq:proj:PilP:b}
that
defines the polynomial projection $\PiLP{\ks}$ on the virtual element
space $\Vsh{\ks}(\P)$ as $\GMatrix\matPiLP{\ks}=\BMatrix$, where
$\GMatrix=\widetilde{\GMatrix}+\GMatrix^0\in\REAL^{\PScard\times\PScard}$,
$\BMatrix=\widetilde{\BMatrix}+\BMatrix^0\in\REAL^{\PScard\times\DOFcard}$,
with
\begin{align*}
  \widetilde{\GMatrix}
  &= \alpha_2\widetilde{\GMatrix}_2 + \alpha_1\widetilde{\GMatrix}_1
  = \alpha_2\int_\P\Delta\MonomialVec\,\Delta\MonomialVec\transpose\dxv
  + \alpha_1\int_\P\nabla\MonomialVec\cdot\nabla\MonomialVec\transpose\dxv,\\[1em]
  \widetilde{\BMatrix}
  &= \alpha_2\widetilde{\BMatrix}_2 + \alpha_1\widetilde{\BMatrix}_1
  = \alpha_2\int_\P\Delta\MonomialVec\,\Delta\VirtualSFVec\transpose\dxv
  +  \alpha_1\int_\P\nabla\MonomialVec\cdot\nabla\VirtualSFVec\transpose\dxv,
\end{align*}
and
\begin{align*}
  \GMatrix^0 =
  \left(\begin{array}{c}
    \widehat{\PinP{\ks}\MonomialVec\transpose} \\ \mathbf{0}\transpose \\ \vdots \\ \mathbf{0}\transpose
  \end{array}\right),\qquad
  \BMatrix^0 =
  \left(\begin{array}{c}
    \widehat{\PinP{\ks}\VirtualSFVec\transpose} \\ \mathbf{0}\transpose \\ \vdots \\ \mathbf{0}\transpose
  \end{array}\right),\qquad  
\end{align*}
We can split the linear system providing the projection matrix
$\matPiLP{\ks}$ into the summation of the two conditions
$\widetilde{\GMatrix}\matPiLP{\ks}=\widetilde{\BMatrix}$, which
expresses~\eqref{eq:proj:PilP:a}, and
$\GMatrix^0\matPiLP{\ks}=\BMatrix^0$, which
espresses~\eqref{eq:proj:PilP:b}.
The latter condition fixes the kernel of the differential operator
$\calL$.
Therefore, matrix $\GMatrix$ is nonsingular by construction, and we
can formally state that $\matPiLP{\ks}=\GMatrix^{-1}\BMatrix$.
\RED{
  A (double) integration by parts of the right-hand-side of the
  definition of $\widetilde{\BMatrix}$ yields
  \begin{align*}
    \alpha_2\widetilde{\BMatrix}_2 + \alpha_1\widetilde{\BMatrix}_1
    &= \alpha_2\int_\P\Delta\MonomialVec\,\Delta\VirtualSFVec\transpose\dxv
    +  \alpha_1\int_\P\nabla\MonomialVec\cdot\nabla\VirtualSFVec\transpose\dxv
    \\[0.5em]
    &= \int_{\P}\Big[\alpha_2\Delta^2\MonomialVec-\alpha_1\Delta\MonomialVec\Big]\VirtualSFVec\dxv
    + \int_{\partial\P}\Big[\big(\partial_n(-\alpha_2\MonomialVec+\alpha_1\MonomialVec)\big)\VirtualSFVec\transpose
      + \alpha_2\Delta\MonomialVec\big(\partial_n\VirtualSFVec\transpose\big)\Big]\dxv.
  \end{align*}
  According to what has already been observed about the computability
  of the bilinear form $\calB(\vsh,\qs)$, we see that the last
  right-hand side above can be easily computable by an evaluation of
  the volume and surface integrals through the degrees of freedom of
  $\VirtualSFVec$.
  We perform the numerical integration of the volume integral through
  the degrees of freedom \DOFS{D4} and the numerical integration of
  the edge integrals by evaluating the trace of each basis function
  $\phi_i$ and its normal derivative $\partial_n\phi_i$ through the
  univariate polynomial interpolation of the degrees of freedom
  \DOFS{D1}, \DOFS{D2}, and \DOFS{D3}.  }

\subsection{$\LS{2}$-orthogonal projector operator}

Using this compact notation, we rewrite the variational
problem~\eqref{eq:proj:PilP:a}-\eqref{eq:proj:PilP:b} that defines the
$\LTWO$-orthogonal polynomial projection $\PizP{\ks}$ on the virtual
element space $\Vsh{\ks}(\P)$ as $\Hv\,\matPizP{\ks}=\Cv$,
where
\begin{align*}
  \Hv
  = \int_\P\MonomialVec\,\MonomialVec\transpose\dxv
  \quad\text{and}\quad
  \Cv
  = \int_\P\MonomialVec\,\VirtualSFVec\transpose\dxv.
\end{align*}
We recall that the entries of matrix $\Cv$ that correspond to the
polynomial moments of the Lagrangian basis function $\VirtualSFVec$
against the polynomial of degree up to $k-2$ are computable from the
degrees of freedom \DOFS{D4}.
The enhancing condition in the definition of the local virtual element
space~\eqref{eq:Vsh-enh:def} makes it possible to compute the
polynomial moments of the basis functions $\VirtualSFVec$ against the
polynomials of $\PS{\ks}(\P)\setminus\PS{\ks-2}(\P)$ by using the elliptic
projector.
Since matrix $\Hv$ is nonsingular by construction, we can formally
state that $\matPizP{\ks}=\Hv^{-1}\Cv$.

\subsection{Local matrices}

The stiffness matrix is given by the sum of two terms: a
rank-deficient term, which is responsible for the accuracy of the
method, and a stability term, which fixes the correct rank:
\begin{align*}
  \StiffnessMatrix_\P := \StiffnessMatrix_\P^{\text{cons}} + \StiffnessMatrix_\P^{\text{stab}},
\end{align*}
where
\begin{align*}
  \left(\StiffnessMatrix_\P^{\text{cons}}\right)_{ij}
  = \calBhP\big(\PiLP{k}\varphi_i,\PiLP{k}\varphi_j\big)
  = \bigg(
  \big(\matPiLP{k}\big)^T
  \left[
    \alpha_2\int_{\P}\Delta\mv\Delta\mv^T\dxv +
    \alpha_1\int_{\P}\nabla\mv\cdot\nabla\mv^T\dxv
    \right]
  \matPiLP{k}
  \bigg)_{ij}
\end{align*}
and
\begin{align*}
  \left(\StiffnessMatrix_\P^{\text{stab}}\right)_{ij}
  &= \SPh\left(\left(\Iden{}-\PiLP{k}\right)\varphi_i,\left(\Iden{}-\PiLP{k}\right)\varphi_j\right)
  = \sum_{\ell}
  \text{dof}_{\ell}\left(\left(\Iden{}-\PiLP{k}\right)\varphi_i\right)\,
  \text{dof}_{\ell}\left(\left(\Iden{}-\PiLP{k}\right)\varphi_j\right)\\
  &=
  \bigg(\,\big(\matI-\matD\matPiLP{k}\big)^T\,\big(\matI-\matD\matPiLP{k}\big)\,\bigg)_{ij}.
\end{align*}

The construction of the local mass matrix uses the $\LS{2}$-orthogonal
projection:
\begin{align*}
  \left(\MassMatrix_\P\right)_{ij}
  := \left(\MassMatrix_\P^{\text{cons}}\right)_{ij}
  = \alpha_0\aszerP\big(\PizP{k}\varphi_i,\PizP{k}\varphi_j\big)
  = \bigg(
  \big(\matPizP{k}\big)^T
  \left[
    \alpha_0\int_{\P}\mv\mv^T\dxv
    \right]
  \matPizP{k}
  \bigg)_{ij}.
\end{align*}
Note that we do not need to specify a stabilization term in the local
mass matrix.

\subsection{Right-hand side approximation}

Using again the compact notation, we rewrite the virtual element
approximation of the right-hand side~\eqref{eq:VEM:RHS:def} as
\begin{align}
  \bilP{\fsh}{\varphi_i}
  = \int_{\P}\fs\PizP{\ks}\varphi_i\dxv
  = \bigg(\left[\int_{\P}\fs\mv^T\dxv\right]\matPizP{\ks}\bigg)_{i}.
  \label{eq:VEM:RHS:imple}
\end{align}

\subsection{Formulation of VEM \BLUE{in matrix representation}}
We assemble the local matrices $\StiffnessMatrix_\P$ and
$\MassMatrix_\P$, the forcing term
$\Fv_{\P}=\bilP{\fsh}{\VirtualSFVec}$ and incorporate the boundary
conditions into the global matrices $\StiffnessMatrix$ and
$\MassMatrix$ and the right-hand side vector $\Fv$ in a finite element
way.
The final linear system reads as:
\begin{align*}
  \big(\StiffnessMatrix+\MassMatrix\big)\uvh = \Fv,
\end{align*}
where $\uvh$ is the vector collecting the degrees of freedom of the
virtual element solution $\ush$.
The final matrix $\StiffnessMatrix+\MassMatrix$ is nonsingular by
construction because of the stability property imposed on the local
matrices $\StiffnessMatrix_\P+\MassMatrix_\P$ and the boundary
conditions.




\section{Numerical Results}
\label{sec6:numerical:results}

\begin{figure}
  \centering
  \begin{tabular}{cccc}
    \includegraphics[scale=0.15]{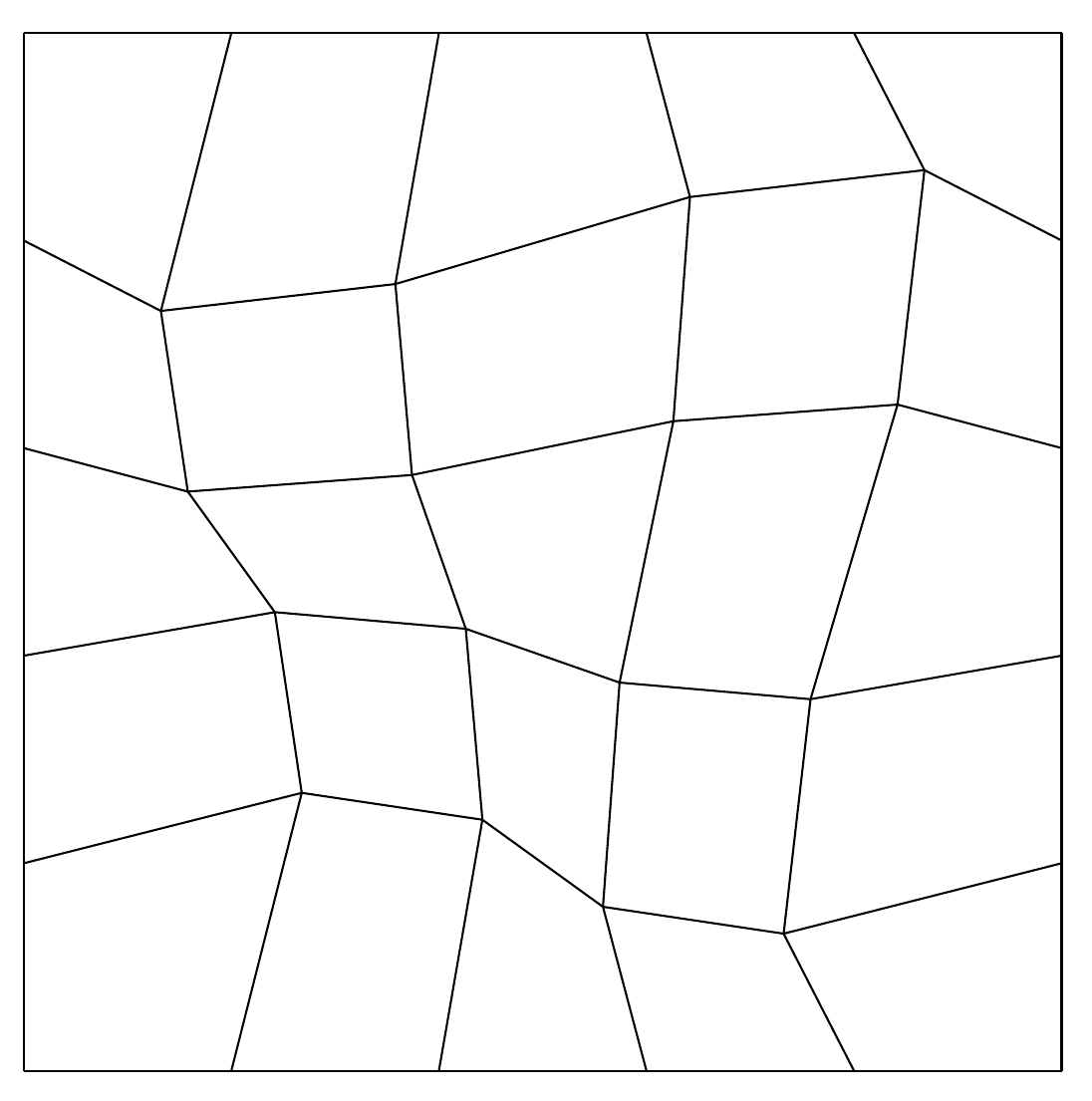} &
    \includegraphics[scale=0.15]{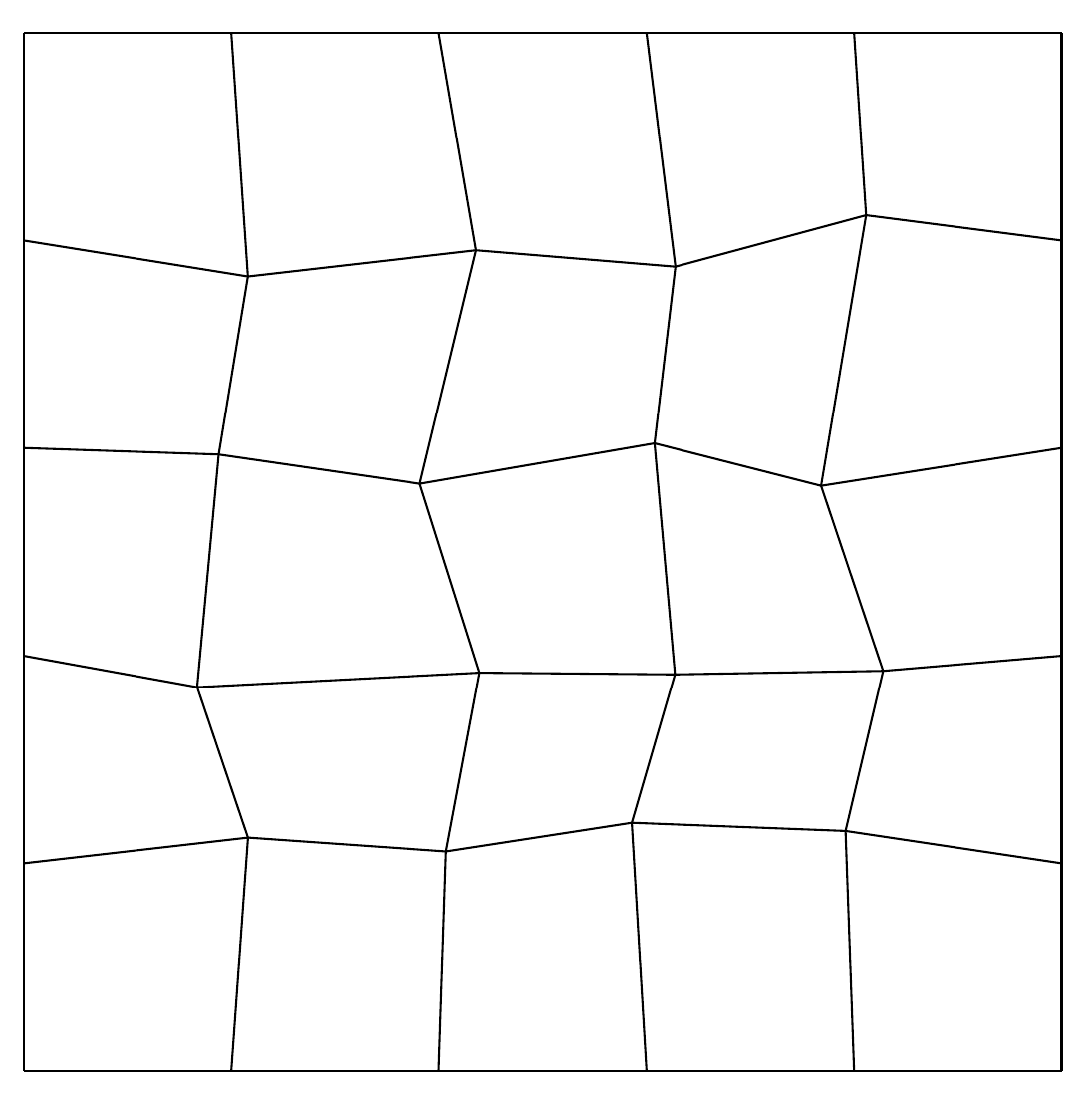} &
    \includegraphics[scale=0.15]{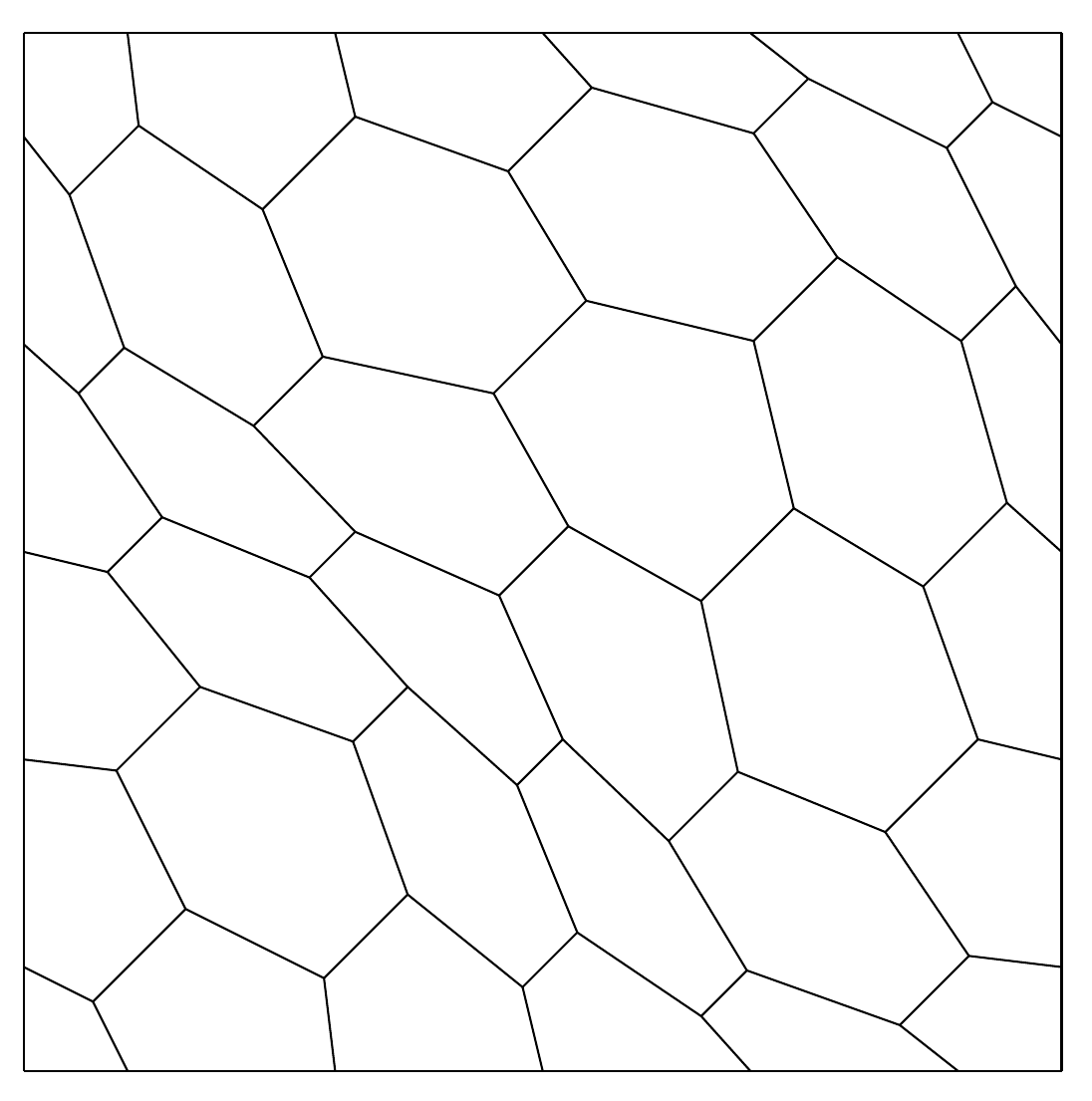} &
    \includegraphics[scale=0.15]{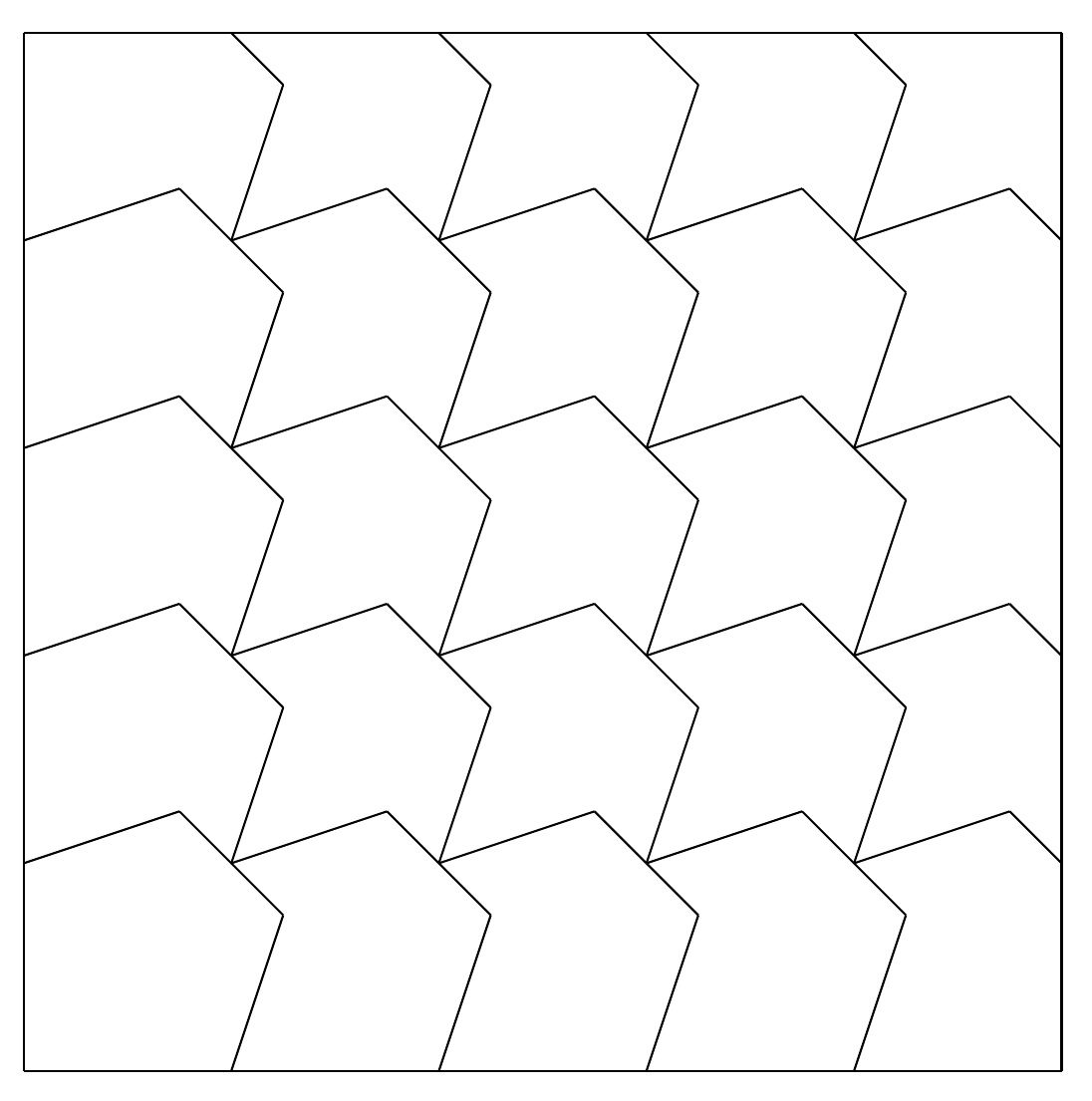} \\
    \includegraphics[scale=0.15]{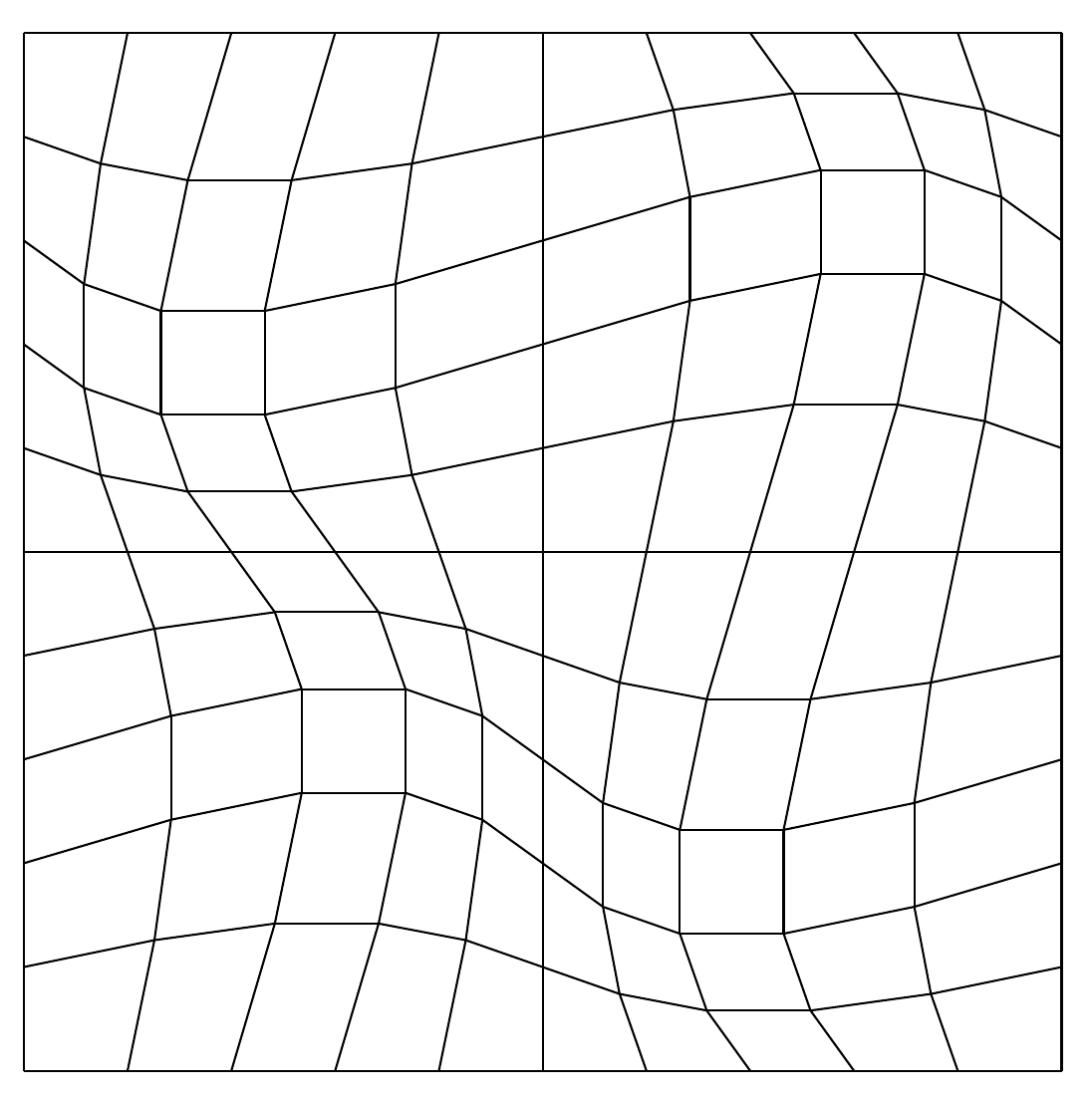} &
    \includegraphics[scale=0.15]{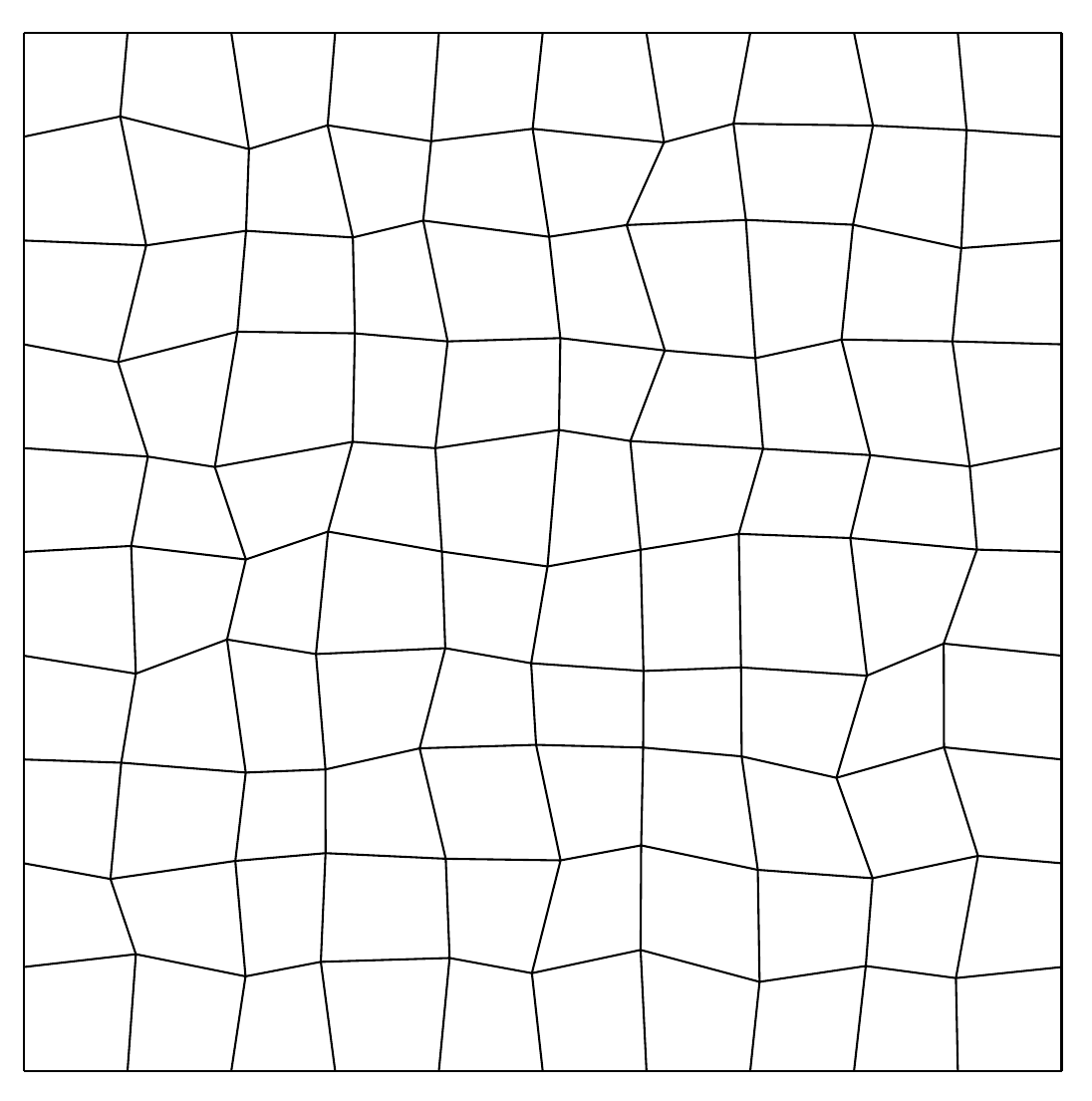} &
    \includegraphics[scale=0.15]{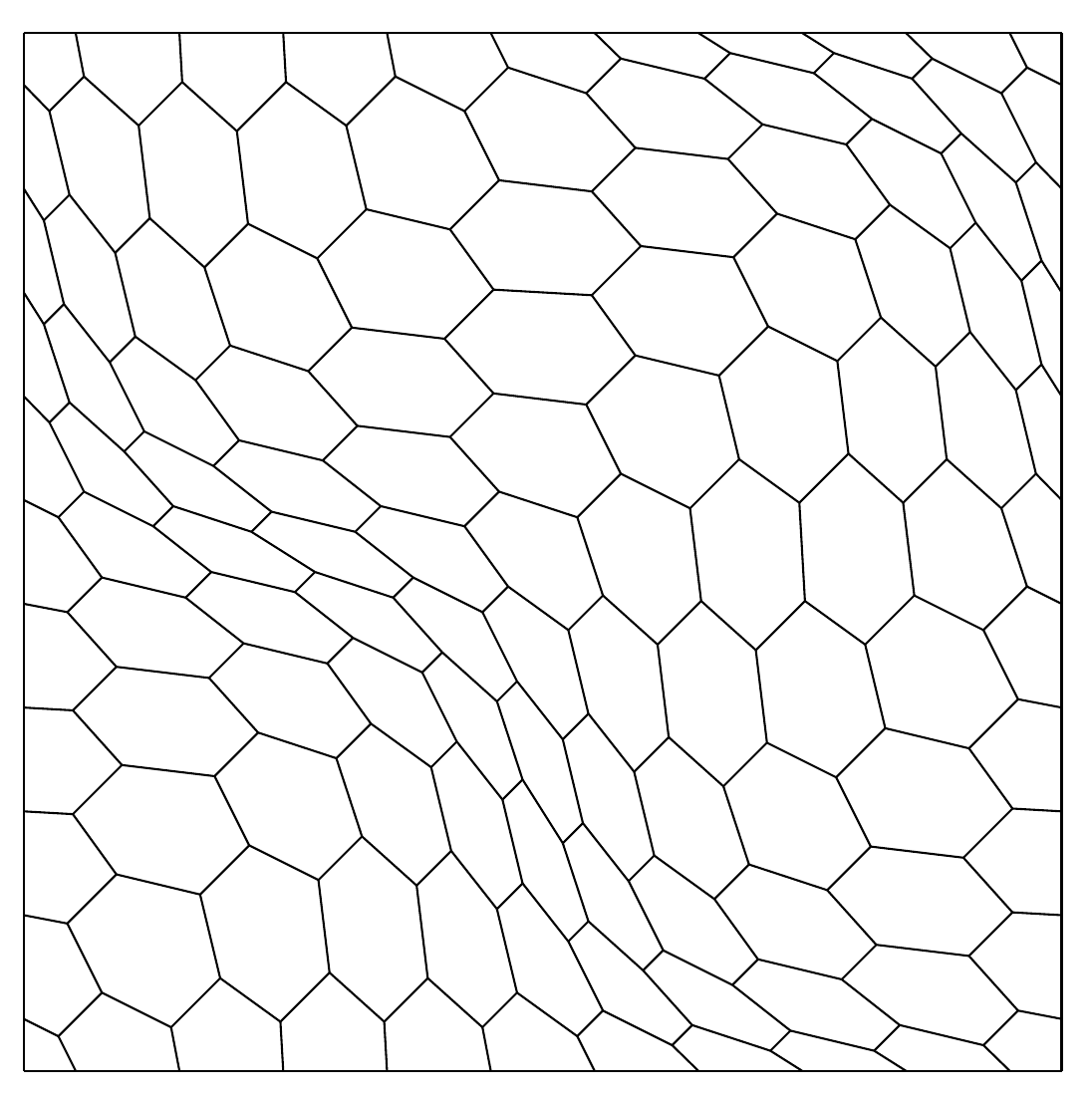} &
    \includegraphics[scale=0.15]{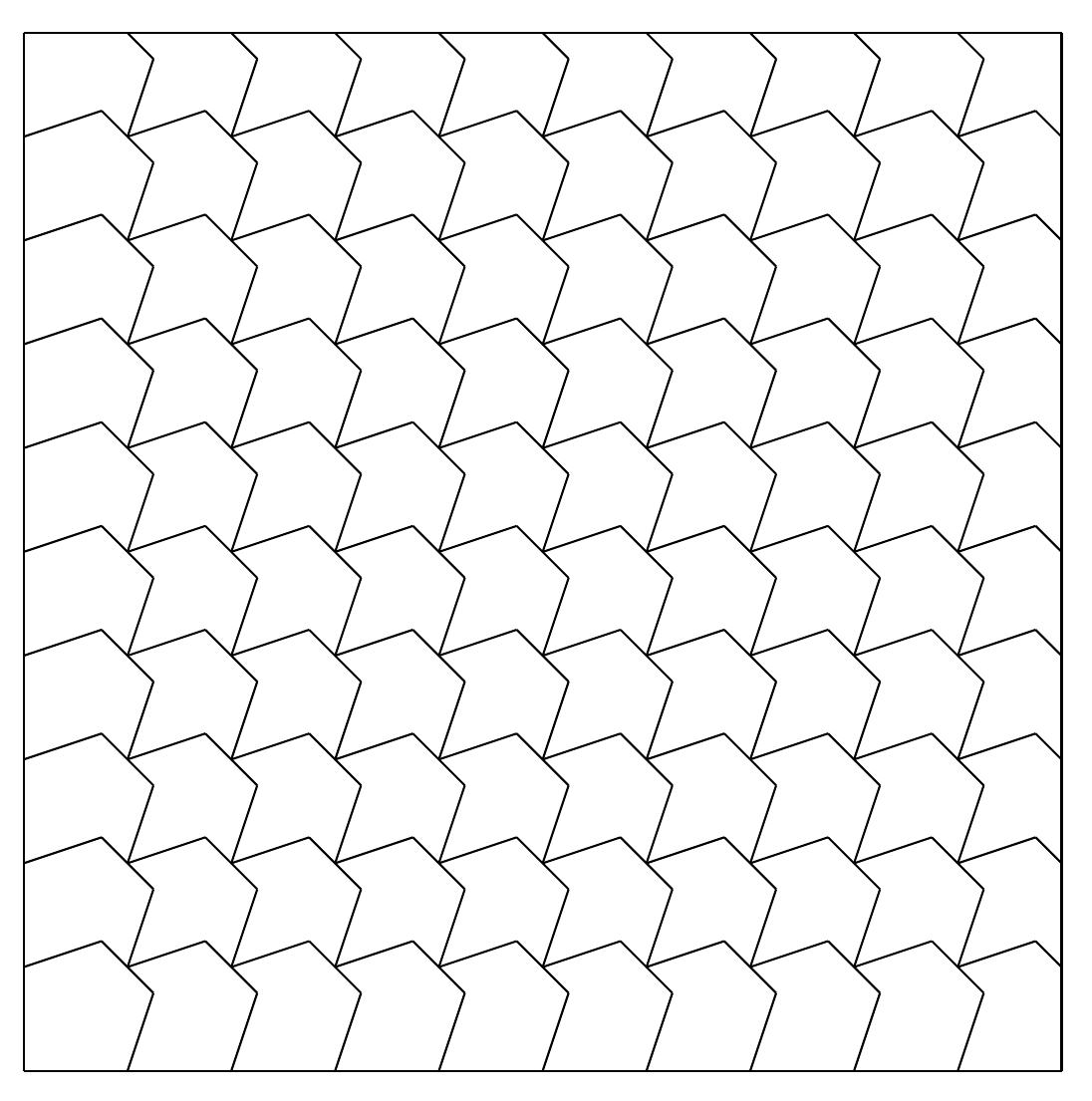} \\[0.25em]
    $a$ & $(b)$ & $(c)$ & $(d)$
  \end{tabular}
  \caption{Base mesh (top panel) and first refinement (bottom panel)
    of $(a)$ smoothly remapped quadrilateral mesh family; $(b)$
    randomized quadrilateral mesh family; $(c)$ remapped hexagonal
    mesh family; $(d)$ nonconvex octagonal mesh family.}
  \label{fig:mesh:family}
\end{figure}

We investigate the behavior of the virtual element method on two
manufactured test cases by numerically solving
problem~\eqref{eq:varform} with the virtual element
method~\eqref{eq:VEM} described in
Section~\ref{sec3:VEMapproximation}.
In the first test case, see Section~\ref{subsec:1st-test-case}, we
study the convergence behavior of the method.
In the second test case, see Section~\ref{subsec:2nd-test-case}, we
study the lenght-scale sensitivity of a bell-shaped crack-like
solution.

\medskip
In the first test case, we consider four representative mesh families
including $(a)$ smoothly remapped quadrilateral meshes; $(b)$
randomized quadrilateral meshes; $(c)$ smoothly remapped hexagonal
meshes; $(d)$ nonconvex octagonal meshes.
In the second test case, we consider only the mesh families $(b)$ and
$(c)$.
The construction of these meshes and the way they are refined is
rather standard, and details can easily be found in the virtual
element literature,
cf.~\cite{Berrone-Borio-Manzini:2018:CMAME:journal}.
For every mesh family, we consider a base mesh and eight refinements.
Tables~\ref{tab:Res-Quads-Remapped}, \ref{tab:Res-Quads-Randomized},
\ref{tab:Res-Hexa-Remapped}, and \ref{tab:Res-Octa-NonConvex} in the
final appendix report the mesh data and the number of degrees of
freedom of the virtual element approximations with polynomial order
$k=2,3,4$.
Figure~\ref{fig:mesh:family} shows the base mesh (top panel) and the
first refined mesh (bottom panel) of each mesh family.

\medskip
On any set of refined meshes, we measure the relative errors in the
$\LTWO$, $\HONE$, and energy norms.
Instead of the virtual element solution $\ush$, which is unknown, we
use its orthogonal polynomial projection $\Piz{k}\ush$.
We compute the $\LTWO$ relative error according to the formula
\begin{align}
  \text{error}_{\LTWO(\Omega)}(\ush) = \dfrac{\norm{\us-\Piz{k}\ush}{0,\Omega}}{\norm{\us}{0,\Omega}}
  \approx \dfrac{ \norm{\us-\ush}{0,\Omega} }{\norm{\us}{0,\Omega}};
  \label{eq:error:L2:norm}
\end{align}
the $\HS{1}$ relative errors according to the
formula
\begin{align}
  \text{error}_{\HS{1}(\Omega)}(\ush) = \frac{\snorm{\us-\Piz{k}\ush}{1,\hh}}{\snorm{\us}{1,\Omega}}
  \approx \dfrac{\snorm{\us-\ush}{1,\Omega}}{\snorm{\us}{1,\Omega}},
  \label{eq:error:Hn:norm}
\end{align}
and the energy error according the formula
\begin{align}
  \text{error}_{\calA_{\hh}}(\ush) = \left(\dfrac{\calA_{\hh}(\us-\Piz{k}\ush,\us-\Piz{k}\ush)}{\calA_{\hh}(\us,\us)}\right)^{\frac12}
  \approx \dfrac{ \norm{\us-\ush}{2,\Omega} }{\norm{\us}{2,\Omega}}.
  \label{eq:error:energy:norm}
\end{align}

\subsection{\BLUE{Test Case~1. Convergence behavior of a manufactured solution in various norms}}
\label{subsec:1st-test-case}



\begin{figure}[t]
  \centering
  \begin{tabular}{ccc}
    \begin{overpic}[scale=0.3]{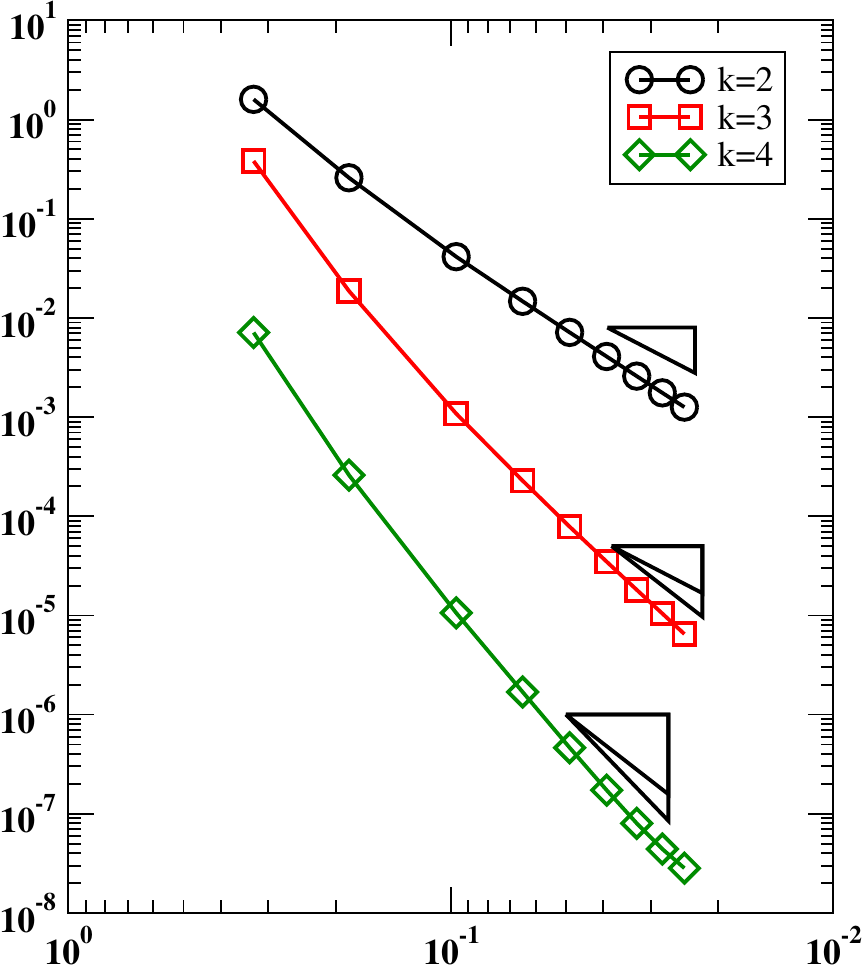} 
      \put( 45, -8){\textbf{h}}
      \put( -8, 7){\begin{sideways}\textbf{Energy Approximation Error}\end{sideways}}
      \put(73,  62){\begin{small}$\mathbf{1}$\end{small}}
      \put(73.5,39){\begin{small}$\mathbf{2}$\end{small}}
      \put(71,  20){\begin{small}$\mathbf{3}$\end{small}}
    \end{overpic}
    &\quad
    \begin{overpic}[scale=0.3]{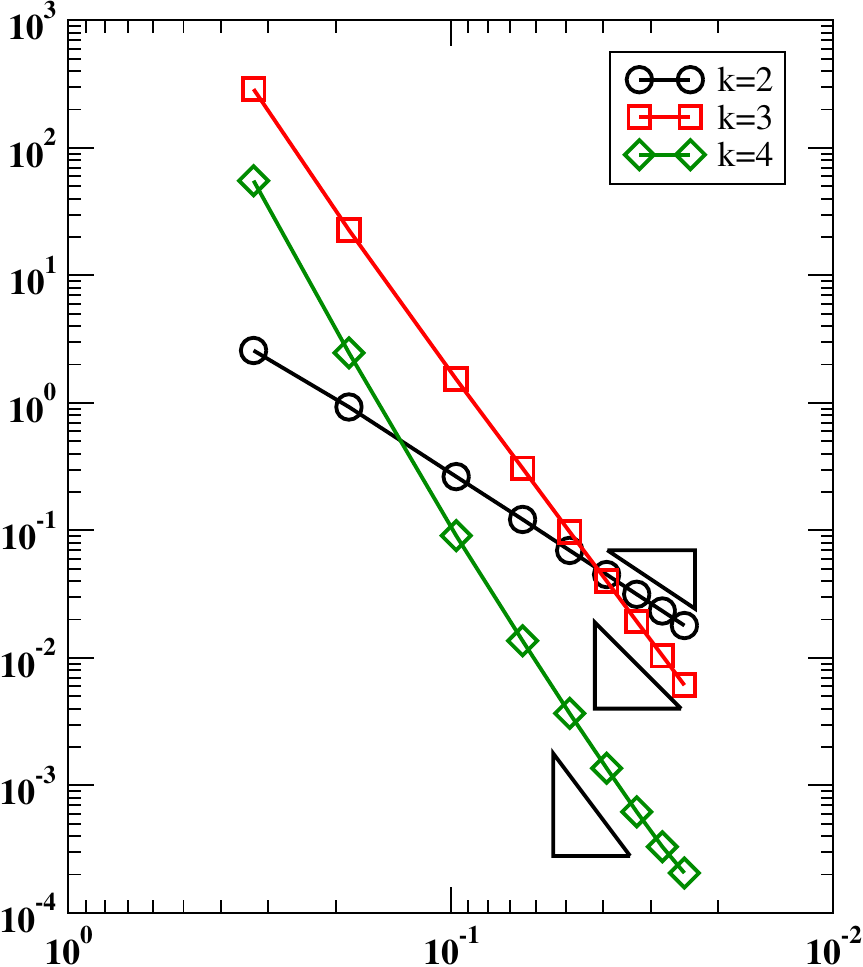} 
      \put( 45, -8){\textbf{h}}
      \put( -8, 14){\begin{sideways}\textbf{$\mathbf{H^1}$ Approximation Error}\end{sideways}}
      \put(73.5,39){\begin{small}$\mathbf{2}$\end{small}}
      \put(65,  21){\begin{small}$\mathbf{3}$\end{small}}
      \put(52.5,15){\begin{small}$\mathbf{4}$\end{small}}
    \end{overpic}
    &\quad
    \begin{overpic}[scale=0.3]{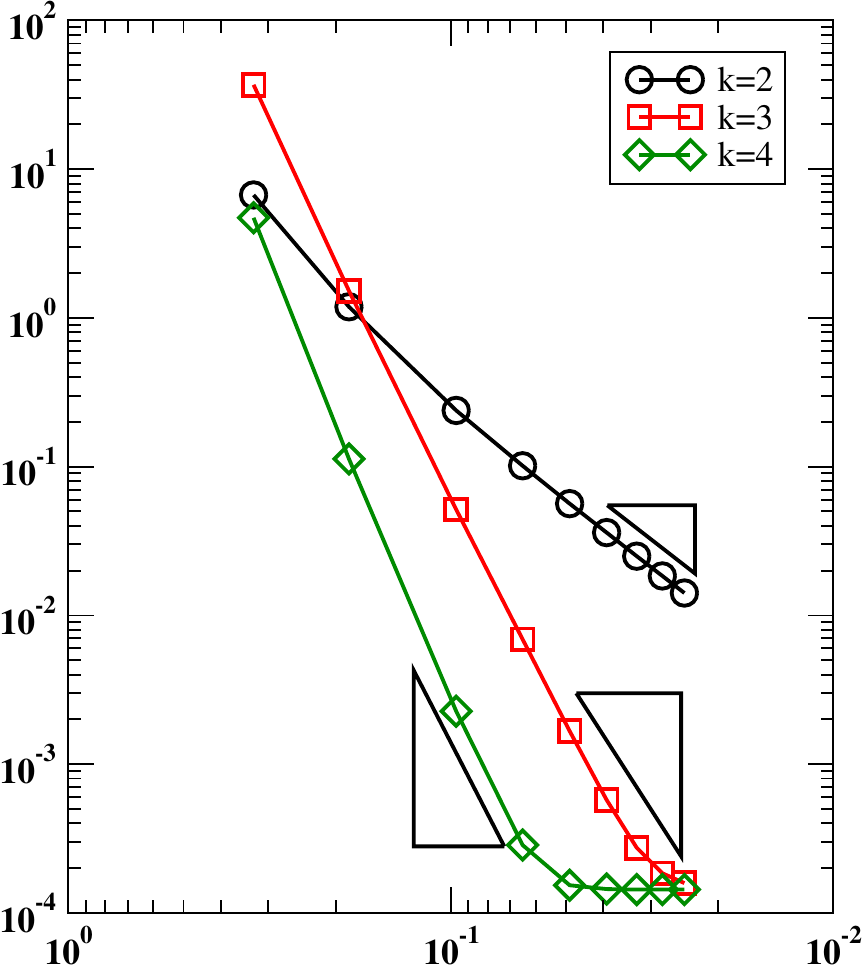} 
      \put( 45, -8){\textbf{h}}
      \put( -8, 14){\begin{sideways}\textbf{$\mathbf{L^2}$ Approximation Error}\end{sideways}}
      \put(73,41){\begin{small}$\mathbf{2}$\end{small}}
      \put(72,18){\begin{small}$\mathbf{4}$\end{small}}
      \put(38,18){\begin{small}$\mathbf{5}$\end{small}}
    \end{overpic}
    \\[0.75cm]
    \begin{overpic}[scale=0.3]{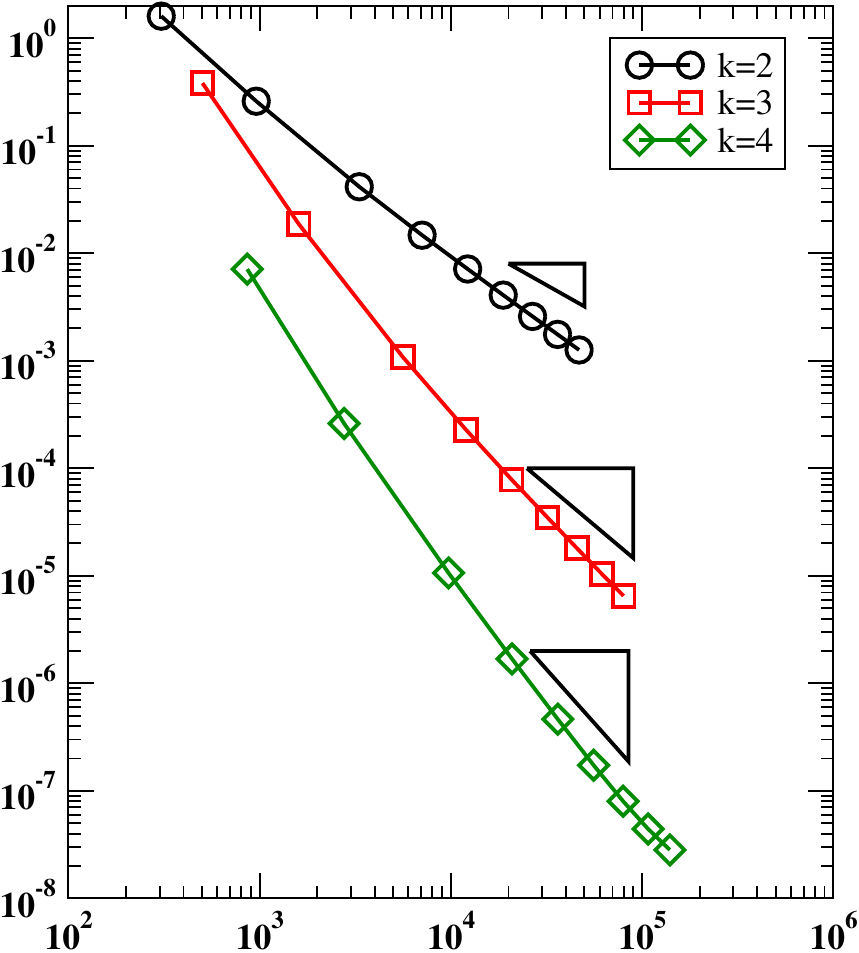} 
      \put( 35, -8){$\Ndfs$}
      \put( -8, 7){\begin{sideways}\textbf{Energy Approximation Error}\end{sideways}}
      \put(63,68){\begin{small}$\mathbf{\frac12}$\end{small}}
      \put(67,44){\begin{small}$\mathbf{1}$\end{small}}
      \put(67,24){\begin{small}$\mathbf{\frac32}$\end{small}}
    \end{overpic}
    &\quad
    \begin{overpic}[scale=0.3]{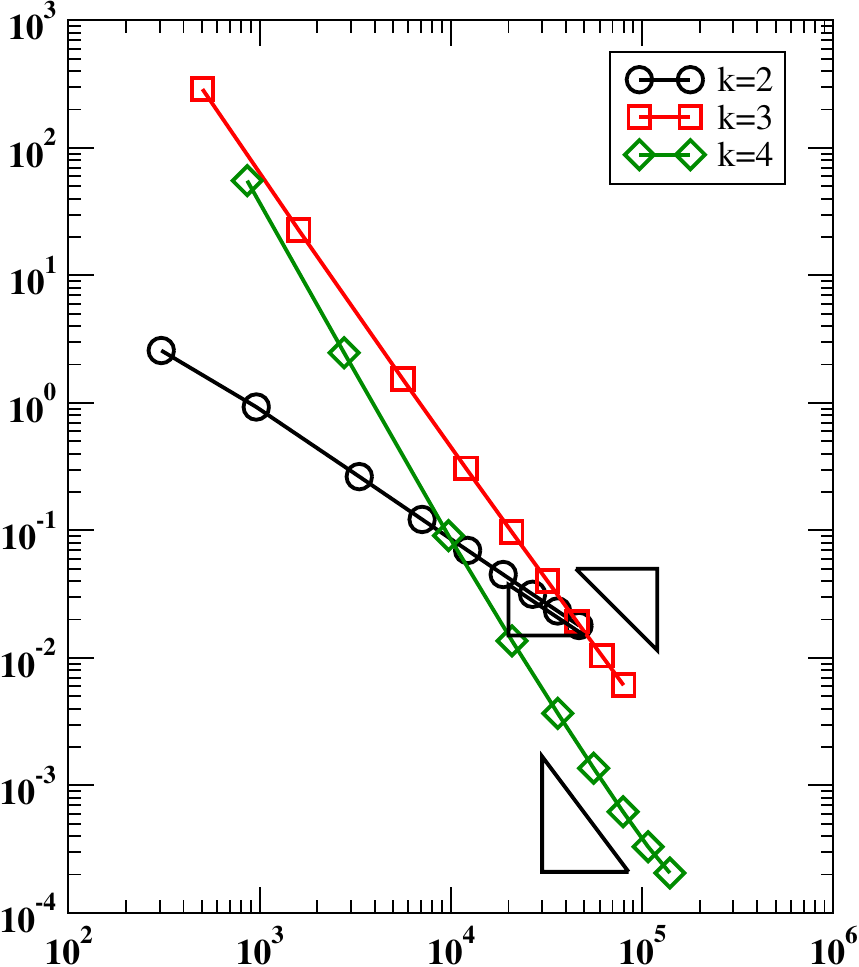} 
      \put( 35, -8){$\Ndfs$}
      \put( -8, 14){\begin{sideways}\textbf{$\mathbf{H^1}$ Approximation Error}\end{sideways}}
      \put(46,    35){\begin{small}$\mathbf{1}$\end{small}}
      \put(69,    36){\begin{small}$\mathbf{\frac32}$\end{small}}
      \put(50.5,13.5){\begin{small}$\mathbf{2}$\end{small}}
    \end{overpic}
    &\quad
    \begin{overpic}[scale=0.3]{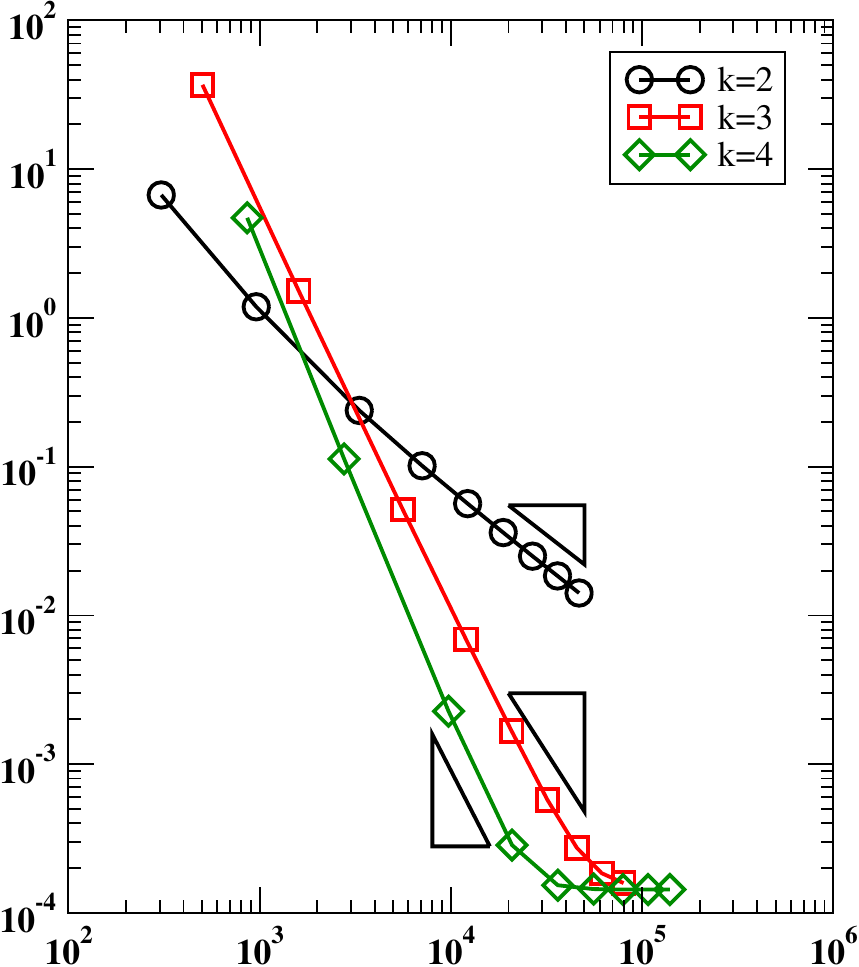} 
      \put( 35, -8){$\Ndfs$}
      \put( -8, 14){\begin{sideways}\textbf{$\mathbf{L^2}$ Approximation Error}\end{sideways}}
      \put(62,43){\begin{small}$\mathbf{1}$\end{small}}
      \put(62,21){\begin{small}$\mathbf{2}$\end{small}}
      \put(39,17){\begin{small}$\mathbf{\frac52}$\end{small}}  
    \end{overpic}
  \end{tabular}
  \vspace{0.25cm}
  \caption{Test Case 1. Error curve measured using energy norm, $\HS{2}$ norm,
    $\HS{1}$ norm and $\LS{2}$ norm. The calculations are carried out
    on the family of smoothly remapped quadrilateral meshes.}
  \label{fig:Res-Quads-Remapped}
\end{figure}
\begin{figure}[t]
  \centering
  \begin{tabular}{ccc}
    \begin{overpic}[scale=0.3]{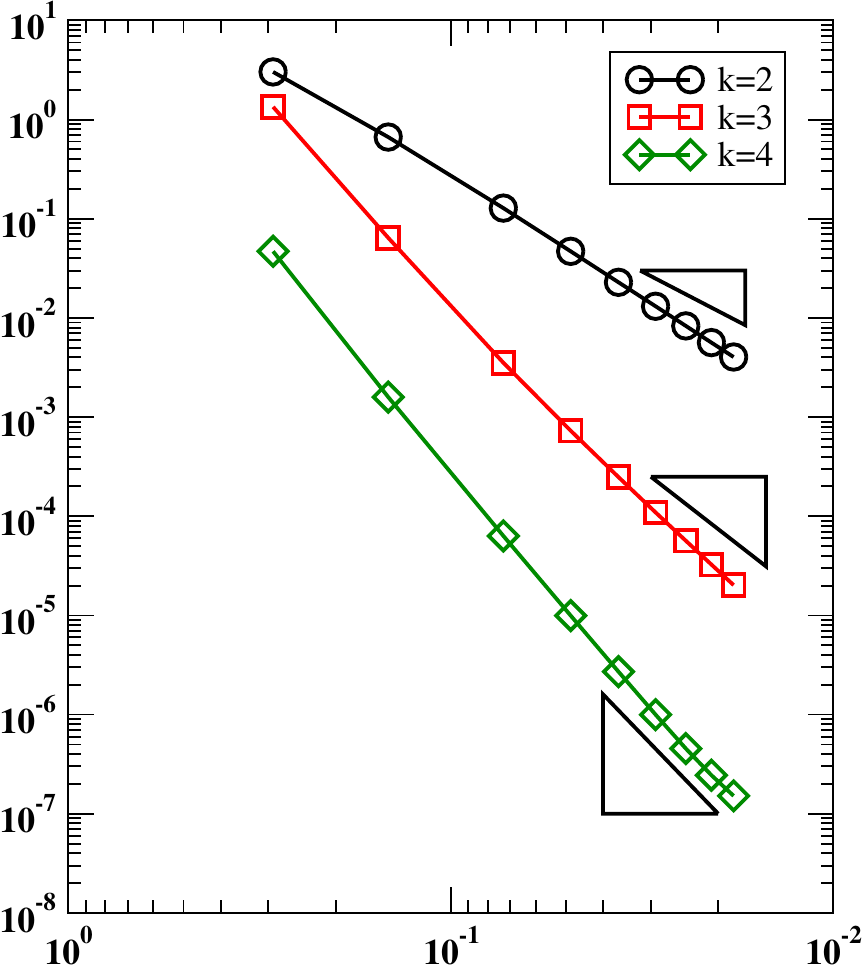} 
      \put( 45, -8){\textbf{h}}
      \put( -8, 7){\begin{sideways}\textbf{Energy Approximation Error}\end{sideways}}
      \put(78,67){\begin{small}$\mathbf{1}$\end{small}}
      \put(72,52){\begin{small}$\mathbf{2}$\end{small}}
      \put(57,20){\begin{small}$\mathbf{3}$\end{small}}
    \end{overpic}
    &\quad
    \begin{overpic}[scale=0.3]{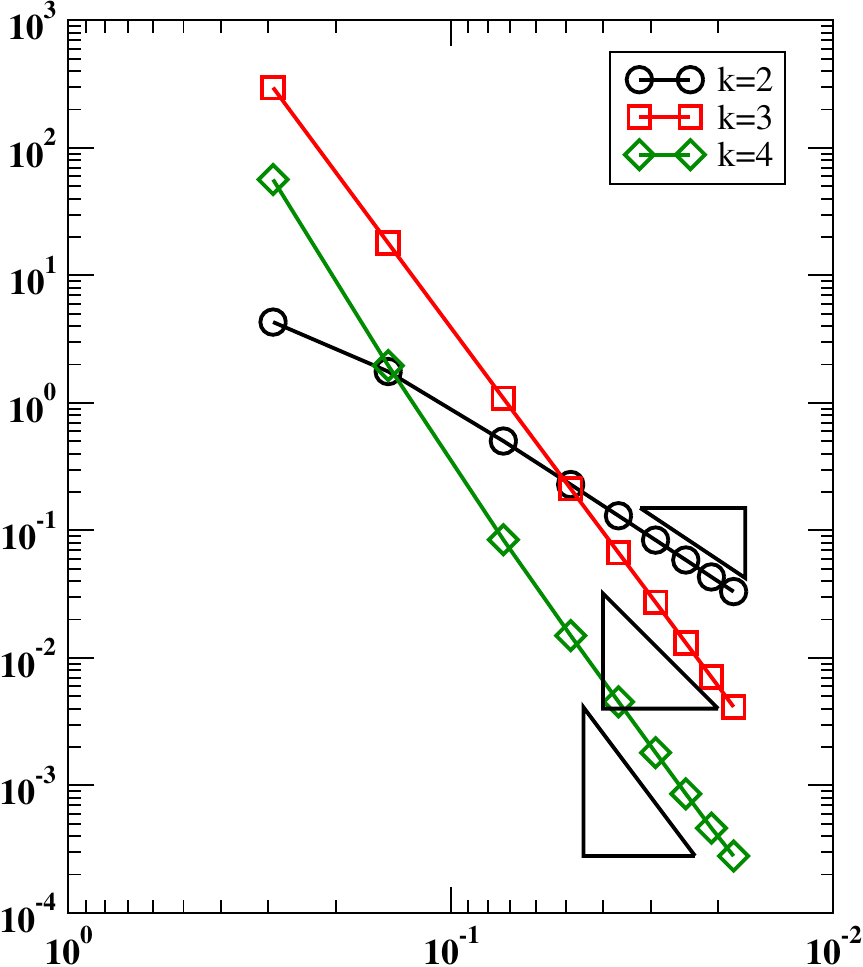} 
      \put( 45, -8){\textbf{h}}
      \put( -8, 14){\begin{sideways}\textbf{$\mathbf{H^1}$ Approximation Error}\end{sideways}}
      \put(70,  49){\begin{small}$\mathbf{2}$\end{small}}
      \put(70,20.5){\begin{small}$\mathbf{3}$\end{small}}
      \put(55,  17){\begin{small}$\mathbf{4}$\end{small}}
    \end{overpic}
    &\quad
    \begin{overpic}[scale=0.3]{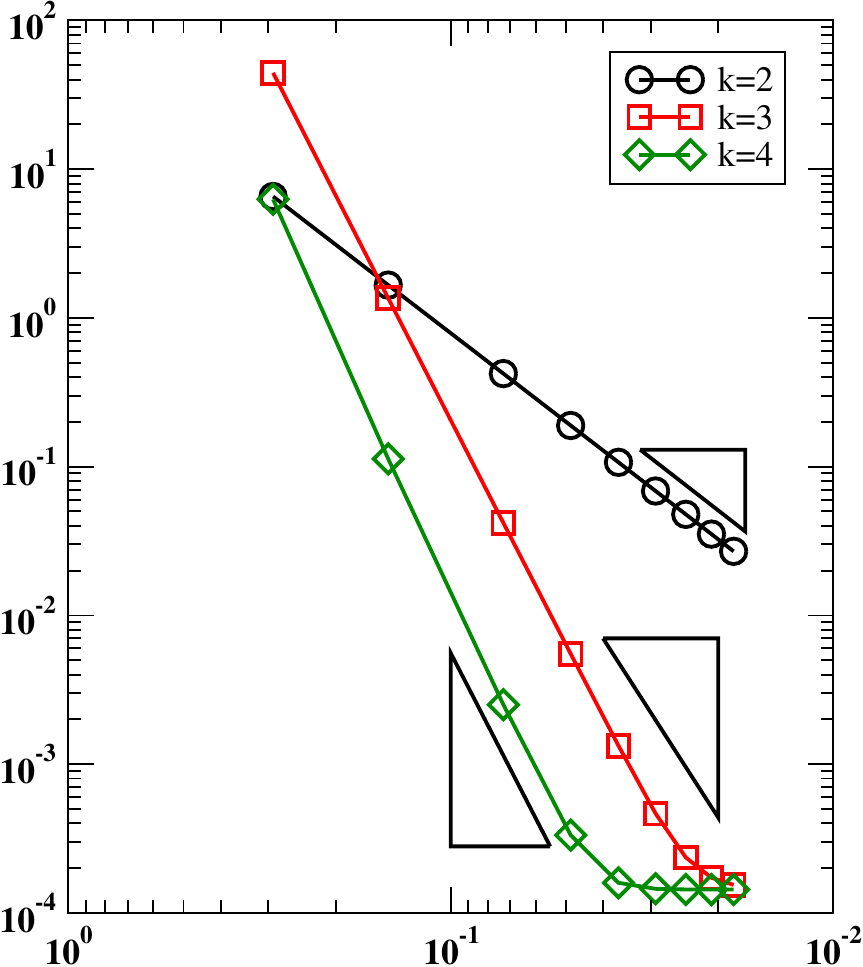} 
      \put( 45, -8){\textbf{h}}
      \put( -8, 14){\begin{sideways}\textbf{$\mathbf{L^2}$ Approximation Error}\end{sideways}}
      \put(78.5,47.5){\begin{small}$\mathbf{2}$\end{small}}
      \put(75.5,  24){\begin{small}$\mathbf{4}$\end{small}}
      \put(41.5,  20){\begin{small}$\mathbf{5}$\end{small}}
    \end{overpic}
    \\[0.75cm]
    \begin{overpic}[scale=0.3]{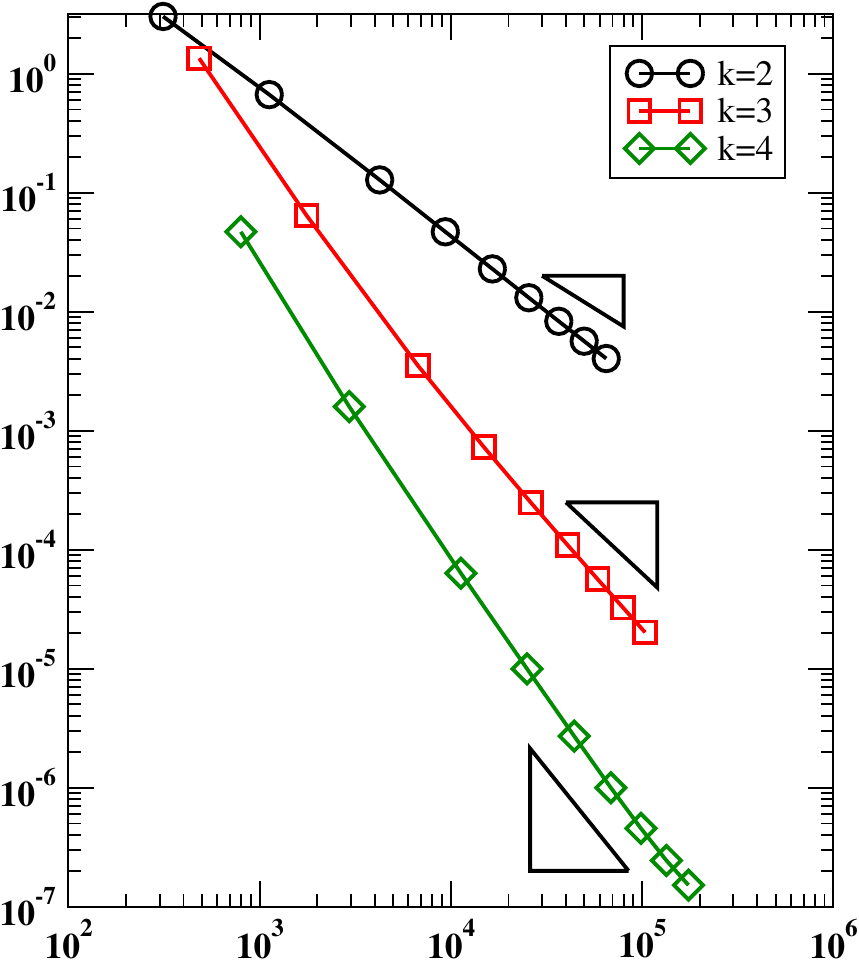} 
      \put( 35, -8){$\Ndfs$}
      \put( -8, 7){\begin{sideways}\textbf{Energy Approximation Error}\end{sideways}}
      \put(66,    67){\begin{small}$\mathbf{\frac12}$\end{small}}
      \put(70,    41){\begin{small}$\mathbf{1}$\end{small}}
      \put(49.5,13.5){\begin{small}$\mathbf{\frac32}$\end{small}}
    \end{overpic}
    &\quad
    \begin{overpic}[scale=0.3]{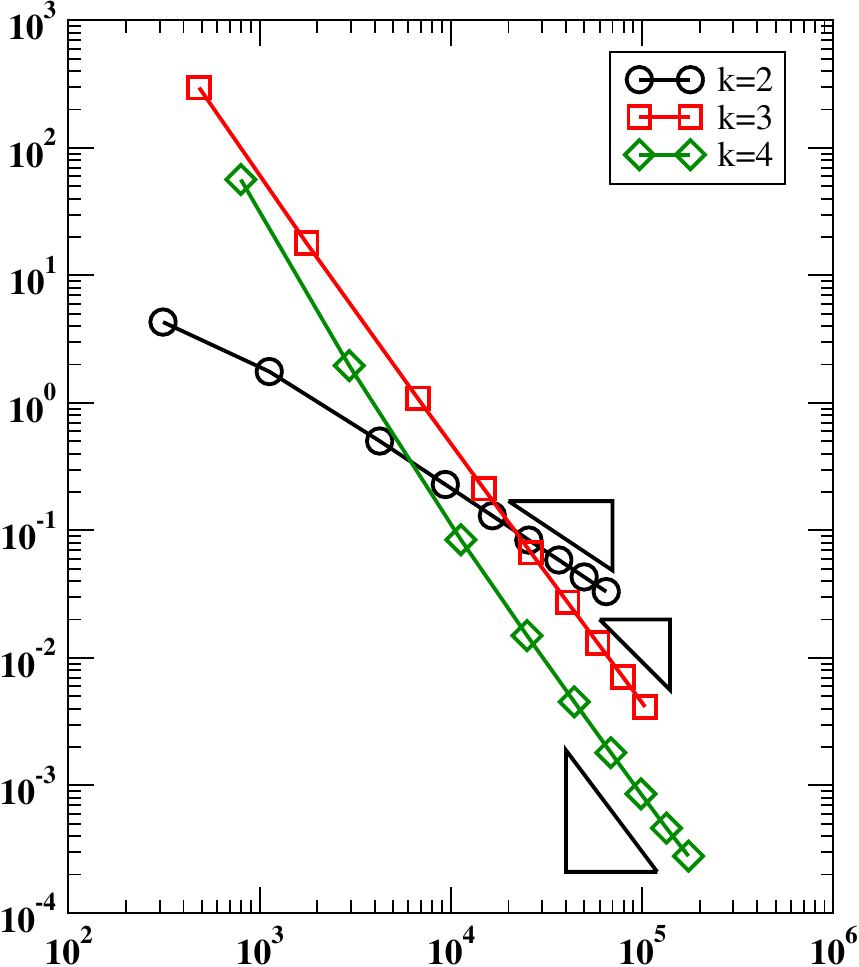} 
      \put( 35, -8){$\Ndfs$}
      \put( -8, 14){\begin{sideways}\textbf{$\mathbf{H^1}$ Approximation Error}\end{sideways}}
      \put(65,42){\begin{small}$\mathbf{1}$\end{small}}
      \put(70,31){\begin{small}$\mathbf{\frac32}$\end{small}}
      \put(53,14){\begin{small}$\mathbf{2}$\end{small}}
    \end{overpic}
    &\quad
    \begin{overpic}[scale=0.3]{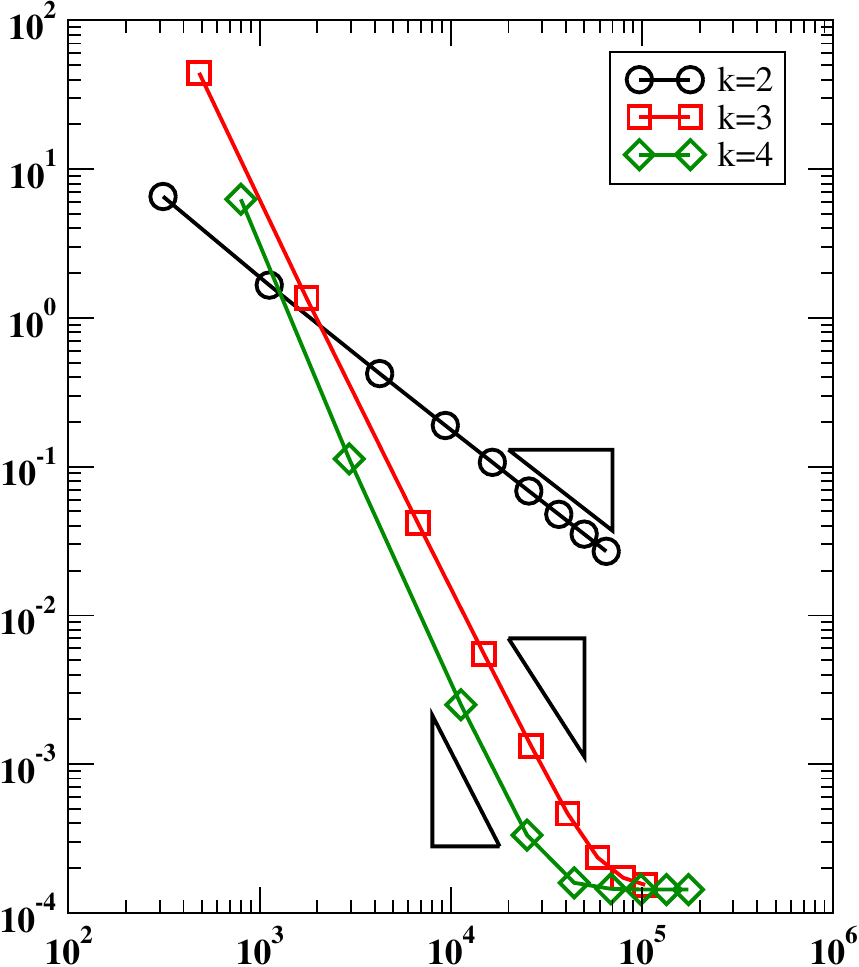} 
      \put( 35, -8){$\Ndfs$}
      \put( -8, 14){\begin{sideways}\textbf{$\mathbf{L^2}$ Approximation Error}\end{sideways}}
      \put(64.5,47.5){\begin{small}$\mathbf{1}$\end{small}}
      \put(62.5,  26){\begin{small}$\mathbf{2}$\end{small}}
      \put(39,    18){\begin{small}$\mathbf{\frac52}$\end{small}}
    \end{overpic}
  \end{tabular}
  \caption{Test Case 1. Error curve measured using energy norm, $\HS{2}$ norm,
    $\HS{1}$ norm and $\LS{2}$ norm. The calculations are carried
    out on the family of randomized quadrilateral meshes.}
  \label{fig:Res-Quads-Randomized}
\end{figure}
\begin{figure}[t]
  \centering
  \begin{tabular}{ccc}
    \begin{overpic}[scale=0.3]{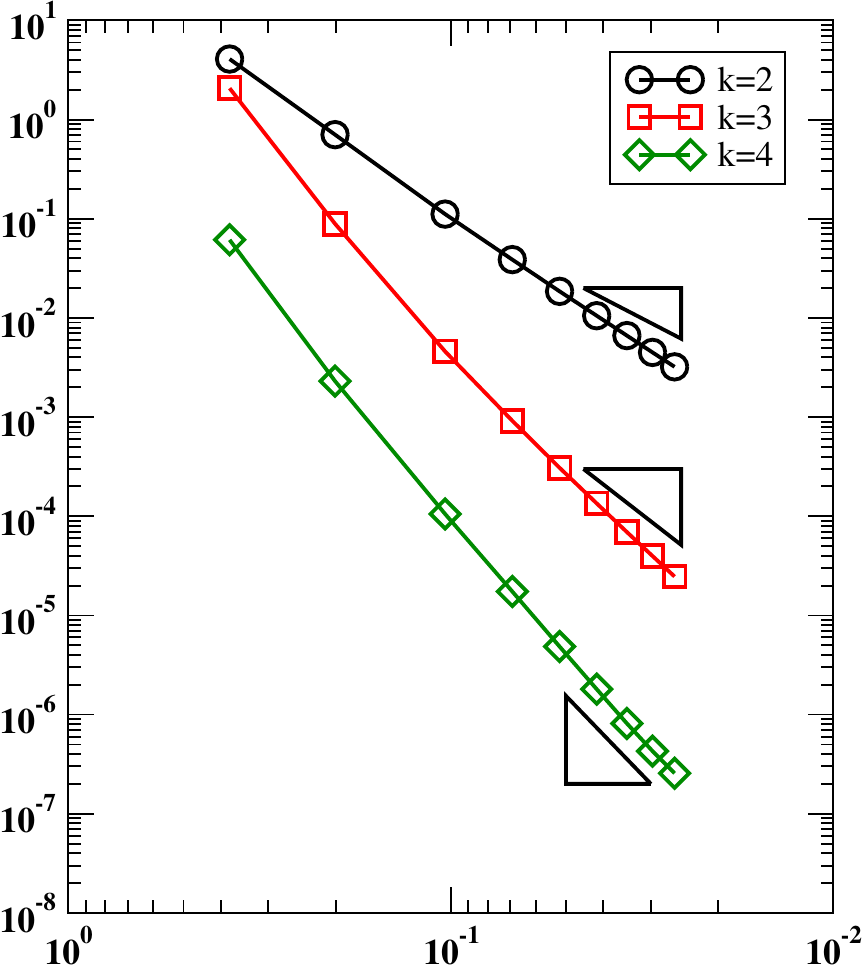} 
      \put( 45, -8){\textbf{h}}
      \put( -8, 7){\begin{sideways}\textbf{Energy Approximation Error}\end{sideways}}
      \put(71.5,65){\begin{small}$\mathbf{1}$\end{small}}
      \put(71.5,46){\begin{small}$\mathbf{2}$\end{small}}
      \put(53.5,21.5){\begin{small}$\mathbf{3}$\end{small}}
    \end{overpic}
    &\quad
    \begin{overpic}[scale=0.3]{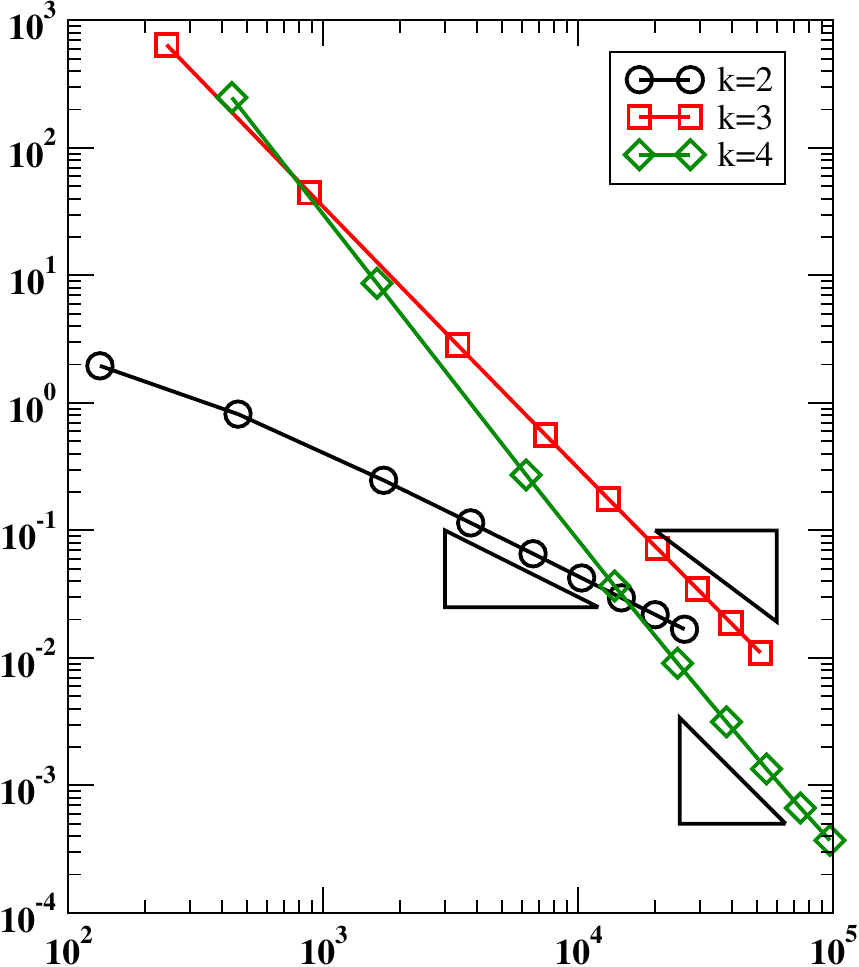} 
      \put( 45, -8){\textbf{h}}
      \put( -8, 14){\begin{sideways}\textbf{$\mathbf{H^1}$ Approximation Error}\end{sideways}}
      \put(41,    39){\begin{small}$\mathbf{2}$\end{small}}
      \put(72,  46.5){\begin{small}$\mathbf{3}$\end{small}}
      \put(65.5,18.5){\begin{small}$\mathbf{4}$\end{small}}
    \end{overpic}
    &\quad
    \begin{overpic}[scale=0.3]{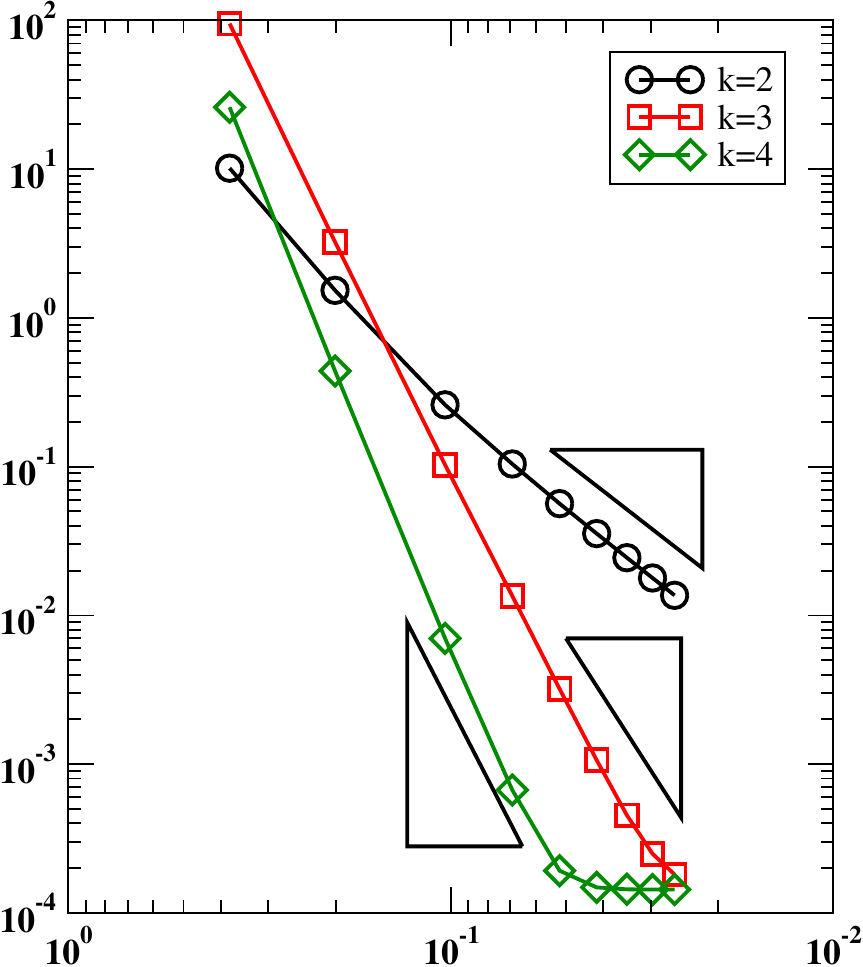} 
      \put( 45, -8){\textbf{h}}
      \put( -8, 14){\begin{sideways}\textbf{$\mathbf{L^2}$ Approximation Error}\end{sideways}}
      \put(74,45){\begin{small}$\mathbf{2}$\end{small}}
      \put(72,24){\begin{small}$\mathbf{4}$\end{small}}
      \put(37,20){\begin{small}$\mathbf{5}$\end{small}}
    \end{overpic}
    \\[0.75cm]
    \begin{overpic}[scale=0.3]{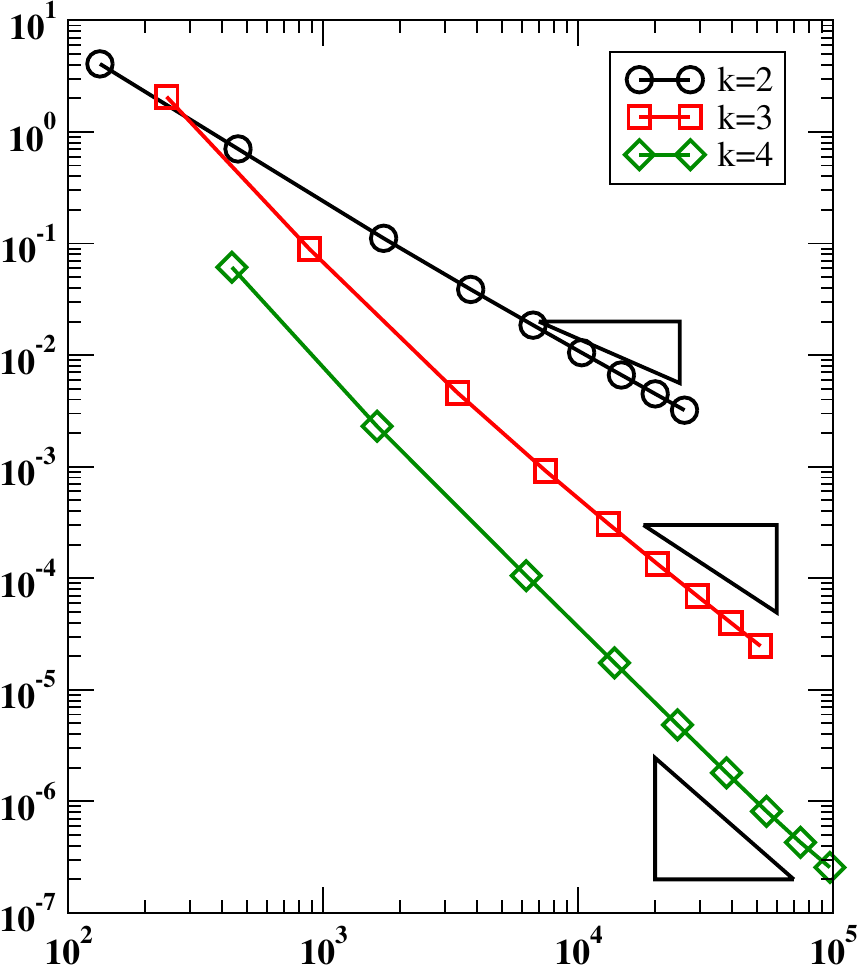} 
      \put( 35, -8){$\Ndfs$}
      \put( -8, 7){\begin{sideways}\textbf{Energy Approximation Error}\end{sideways}}
      \put(71.5,62){\begin{small}$\mathbf{\frac12}$\end{small}}
      \put(72,47.5){\begin{small}$\mathbf{1}$\end{small}}
      \put(62,  13){\begin{small}$\mathbf{\frac32}$\end{small}}
    \end{overpic}
    &\quad
    \begin{overpic}[scale=0.3]{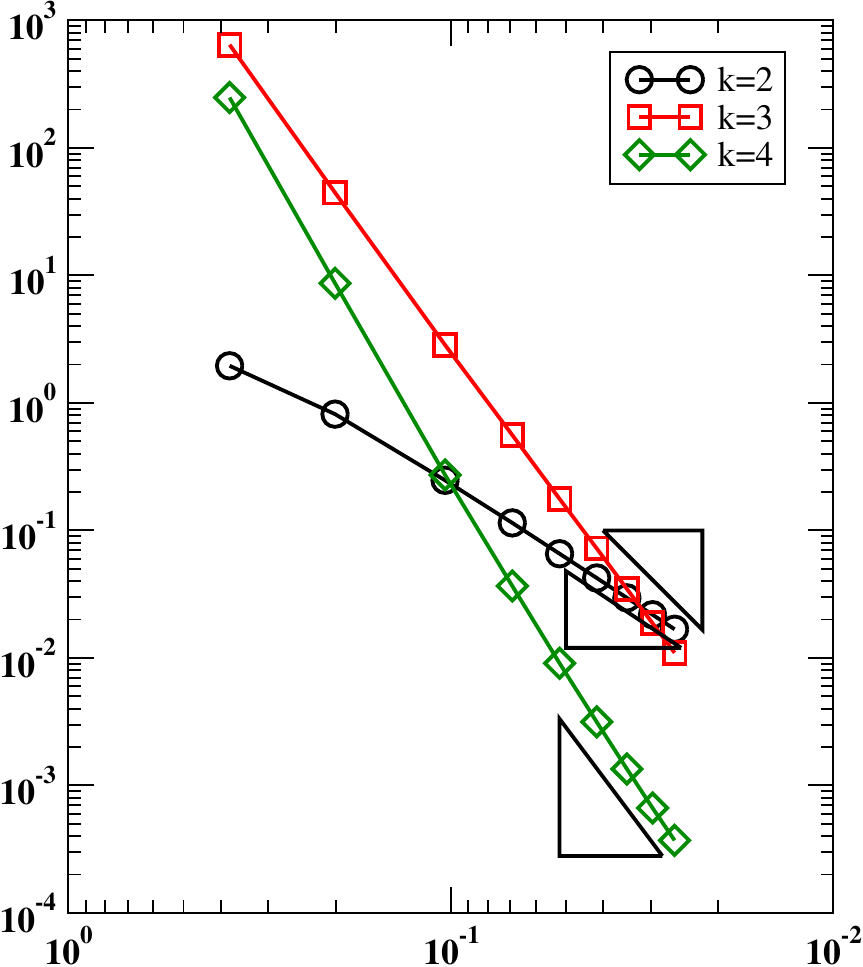} 
      \put( 35, -8){$\Ndfs$}
      \put( -8, 14){\begin{sideways}\textbf{$\mathbf{H^1}$ Approximation Error}\end{sideways}}
      \put(63,  27){\begin{small}$\mathbf{1}$\end{small}}
      \put(73,  38){\begin{small}$\mathbf{\frac32}$\end{small}}
      \put(52.5,17){\begin{small}$\mathbf{2}$\end{small}}
    \end{overpic}
    &\quad
    \begin{overpic}[scale=0.3]{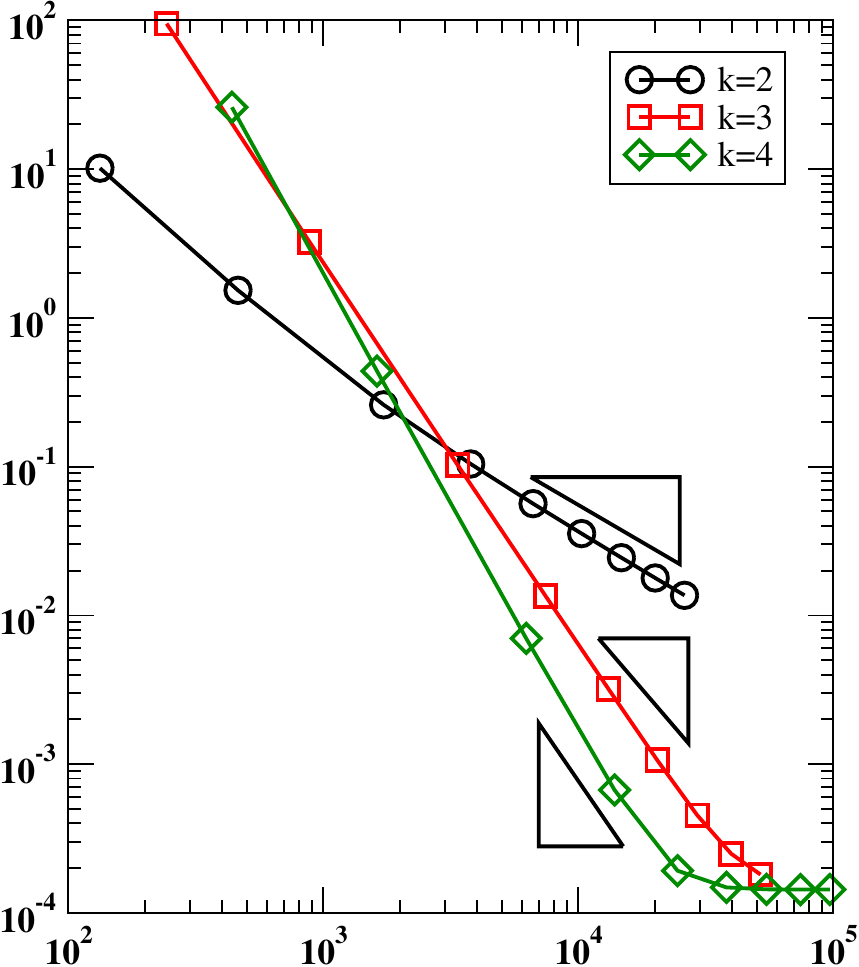} 
      \put( 35, -8){$\Ndfs$}
      \put( -8, 14){\begin{sideways}\textbf{$\mathbf{L^2}$ Approximation Error}\end{sideways}}
      \put(71,44){\begin{small}$\mathbf{1}$\end{small}}
      \put(72,27){\begin{small}$\mathbf{2}$\end{small}}
      \put(50,17){\begin{small}$\mathbf{\frac52}$\end{small}}
    \end{overpic}
  \end{tabular}
  \caption{Test Case 1. Error curve measured using energy norm, $\HS{2}$ norm,
    $\HS{1}$ norm and $\LS{2}$ norm. The calculations are carried out
    on the family of smoothly remapped hexagonal meshes.}
  \label{fig:Res-Hexa-Remapped}
\end{figure}
\begin{figure}[t]
  \centering
  \begin{tabular}{ccc}
    \begin{overpic}[scale=0.3]{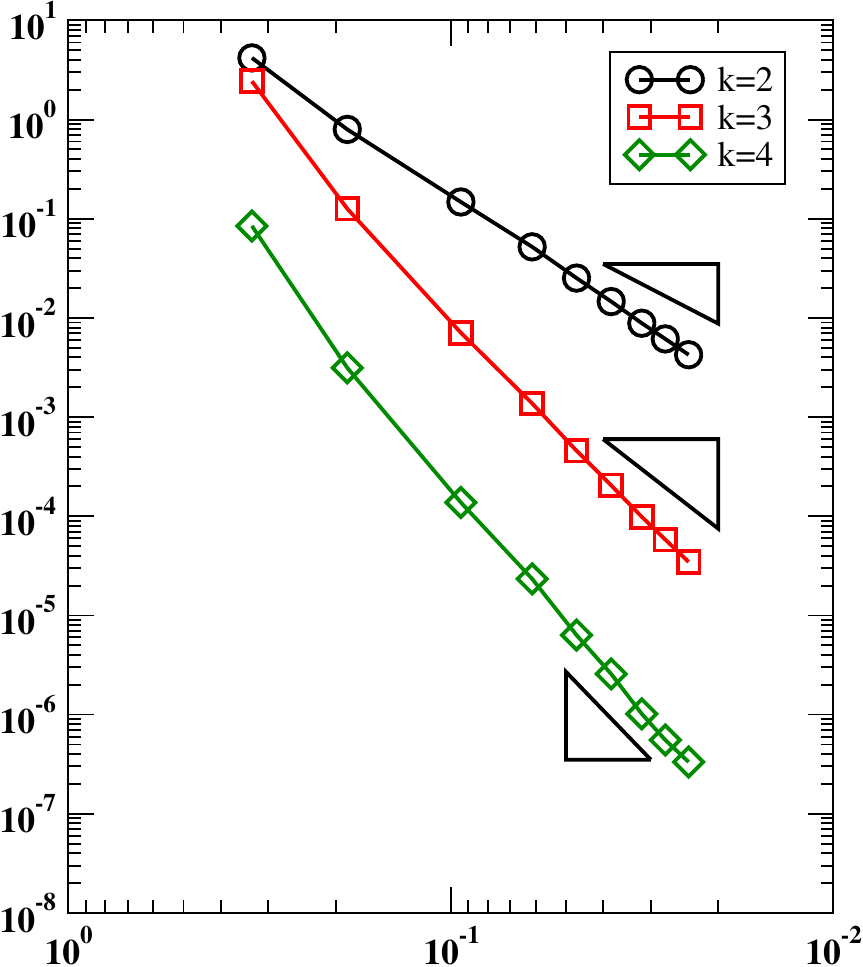} 
      \put( 45, -8){\textbf{h}}
      \put( -8, 7){\begin{sideways}\textbf{Energy Approximation Error}\end{sideways}}
      \put(75,67){\begin{small}$\mathbf{1}$\end{small}}
      \put(75,48){\begin{small}$\mathbf{2}$\end{small}}
      \put(53.5,23.5){\begin{small}$\mathbf{3}$\end{small}}
    \end{overpic}
    &\quad
    \begin{overpic}[scale=0.3]{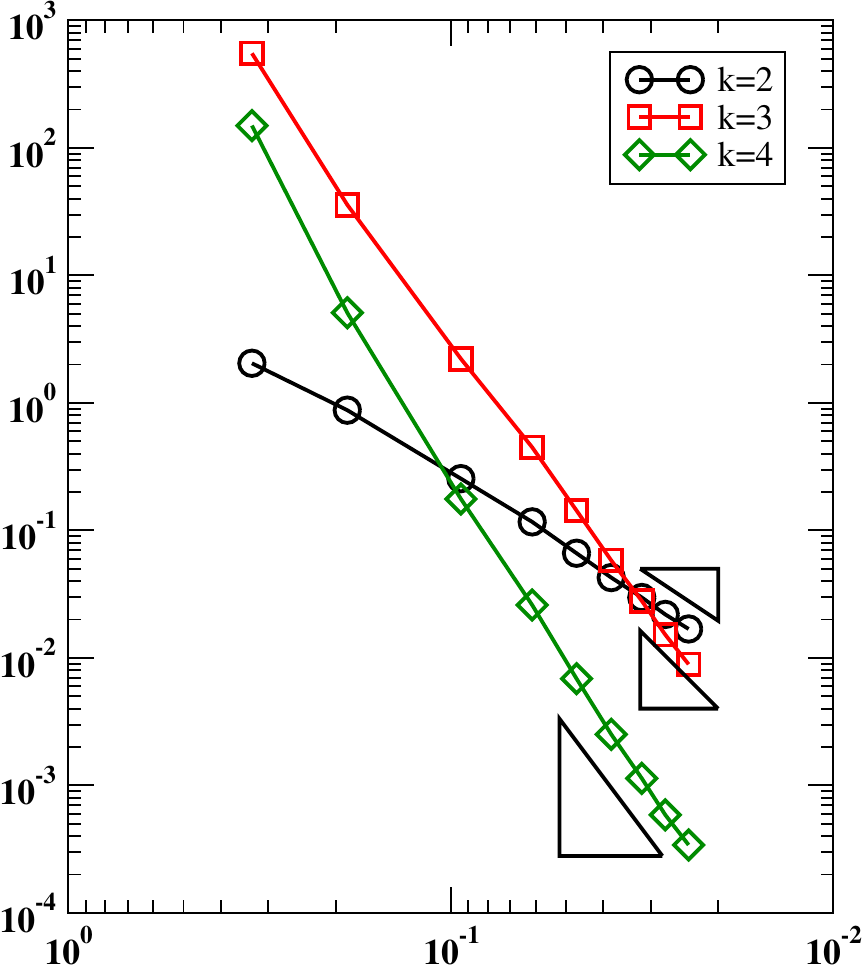} 
      \put( 45, -8){\textbf{h}}
      \put( -8, 14){\begin{sideways}\textbf{$\mathbf{H^1}$ Approximation Error}\end{sideways}}
      \put(75.5,36.5){\begin{small}$\mathbf{2}$\end{small}}
      \put(68,    20){\begin{small}$\mathbf{3}$\end{small}}
      \put(53,  15.5){\begin{small}$\mathbf{4}$\end{small}}
    \end{overpic}
    &\quad
    \begin{overpic}[scale=0.3]{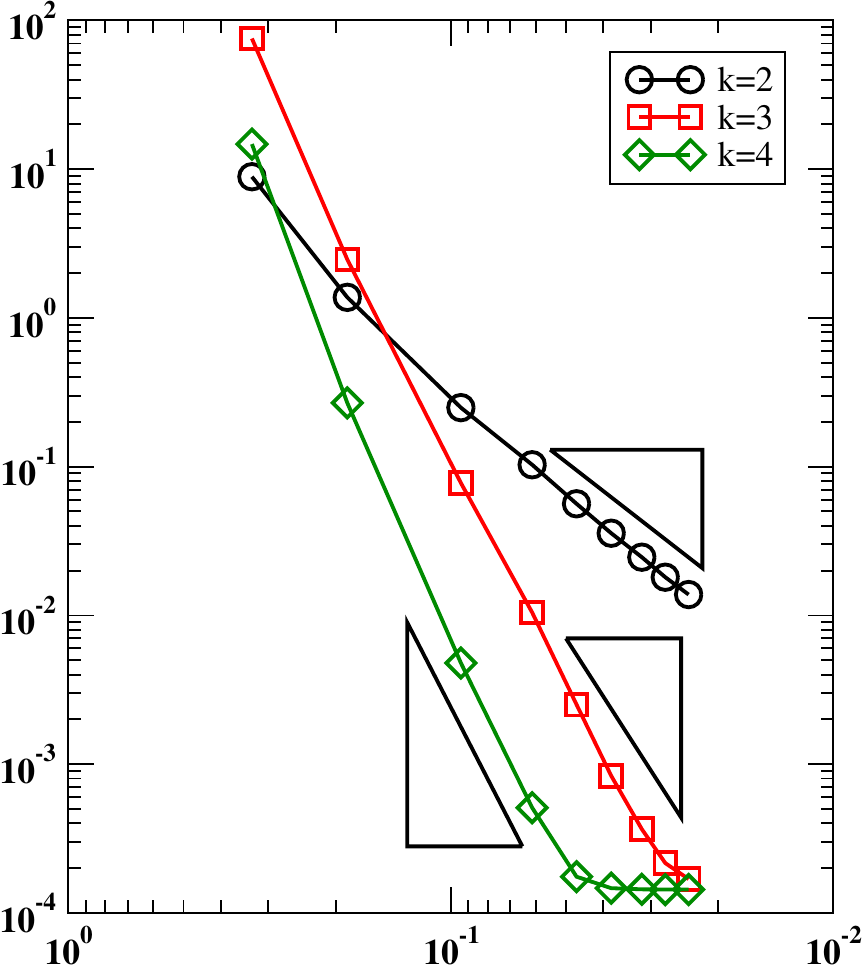} 
      \put( 45, -8){\textbf{h}}
      \put( -8, 14){\begin{sideways}\textbf{$\mathbf{L^2}$ Approximation Error}\end{sideways}}
      \put(74,45){\begin{small}$\mathbf{2}$\end{small}}
      \put(72,24){\begin{small}$\mathbf{4}$\end{small}}
      \put(37,20){\begin{small}$\mathbf{5}$\end{small}}
    \end{overpic}
    \\[0.75cm]
    \begin{overpic}[scale=0.3]{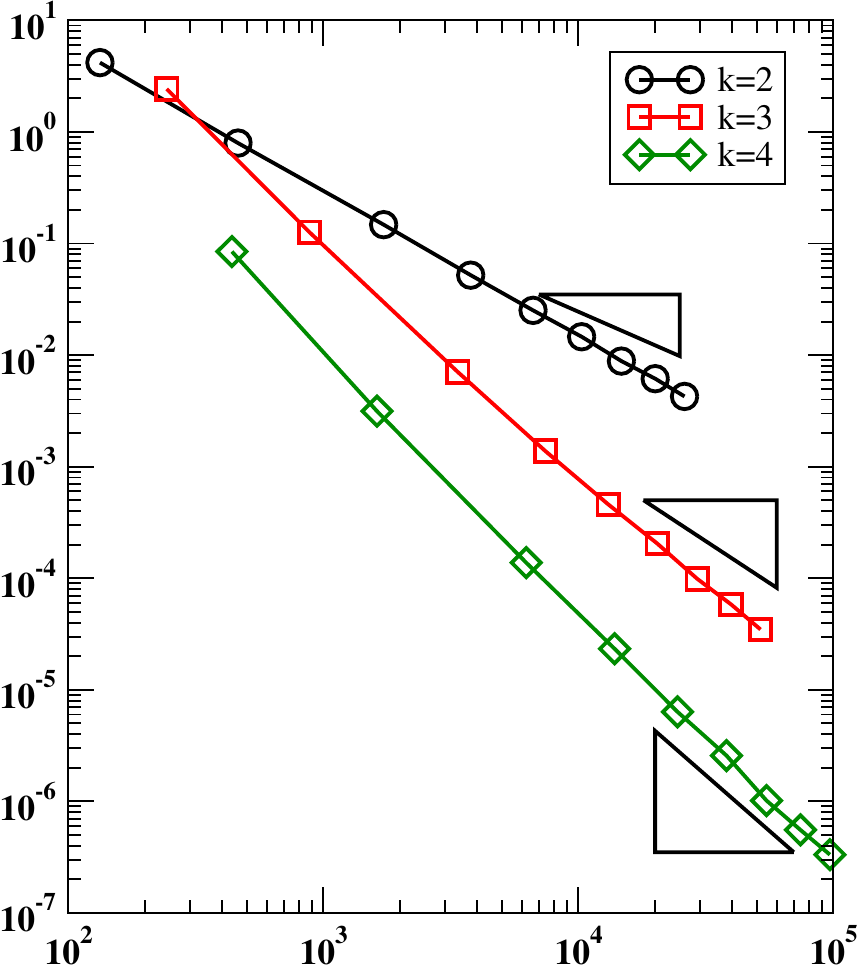} 
      \put( 35, -8){$\Ndfs$}
      \put( -8, 7){\begin{sideways}\textbf{Energy Approximation Error}\end{sideways}}
      \put(70.5,64.5){\begin{small}$\mathbf{\frac12}$\end{small}}
      \put(73,  49.5){\begin{small}$\mathbf{1}$\end{small}}
      \put(62,    16){\begin{small}$\mathbf{\frac32}$\end{small}}
    \end{overpic}
    &\quad
    \begin{overpic}[scale=0.3]{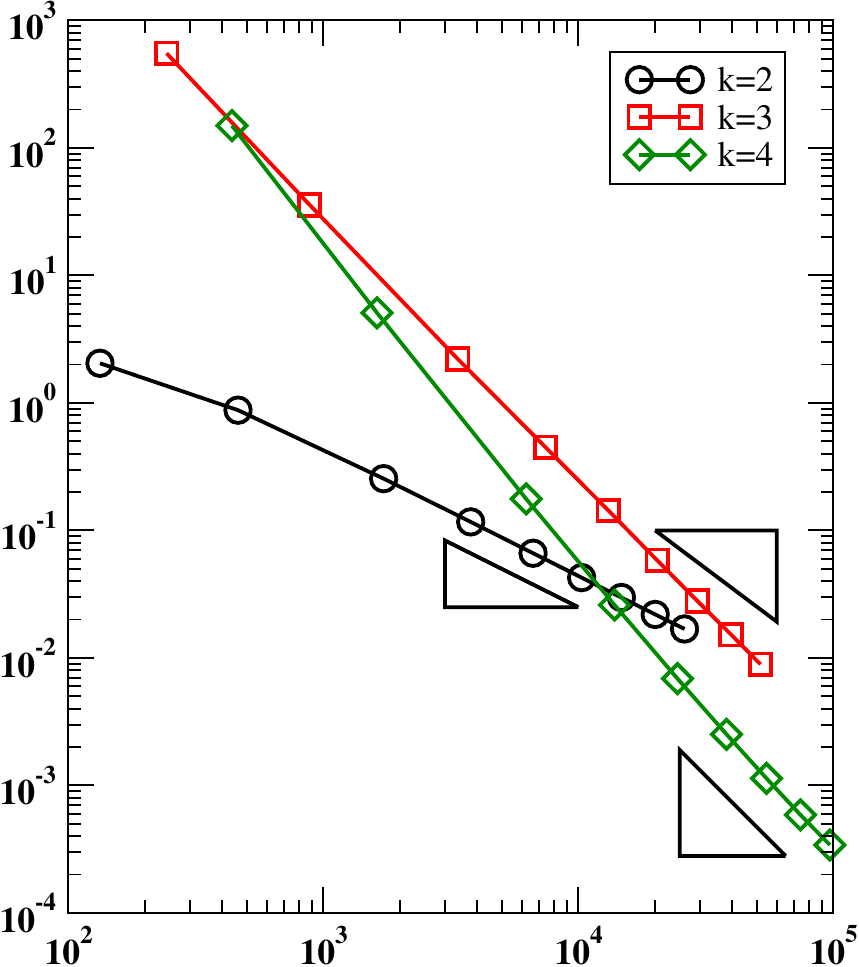} 
      \put( 35, -8){$\Ndfs$}
      \put( -8, 14){\begin{sideways}\textbf{$\mathbf{H^1}$ Approximation Error}\end{sideways}}
      \put(41.5,38.5){\begin{small}$\mathbf{1}$\end{small}}
      \put(72.5,48){\begin{small}$\mathbf{\frac32}$\end{small}}
      \put(65.5,15){\begin{small}$\mathbf{2}$\end{small}}
    \end{overpic}
    &\quad
    \begin{overpic}[scale=0.3]{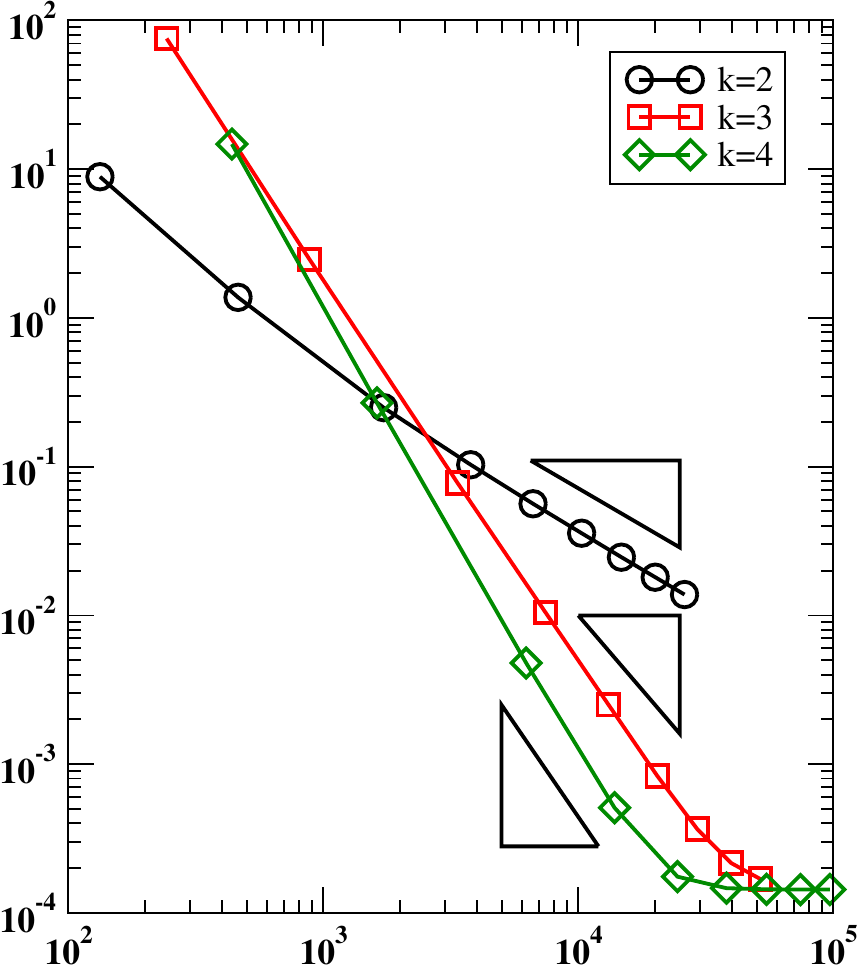} 
      \put( 35, -8){$\Ndfs$}
      \put( -8, 14){\begin{sideways}\textbf{$\mathbf{L^2}$ Approximation Error}\end{sideways}}
      \put(71,46){\begin{small}$\mathbf{1}$\end{small}}
      \put(71.5,29){\begin{small}$\mathbf{2}$\end{small}}
      \put(46.5,17){\begin{small}$\mathbf{\frac52}$\end{small}}
    \end{overpic}
  \end{tabular}
  \caption{Test Case 1. Error curve measured using energy norm, $\HS{2}$ norm,
    $\HS{1}$ norm and $\LS{2}$ norm. The calculations are carried out
    on the family of nonconvex octagonal meshes.}
  \label{fig:Res-Octa-NonConvex}
\end{figure}
\begin{figure}[t] 
\centering
  \begin{tabular}{cc}
    \begin{overpic}[scale=0.45]{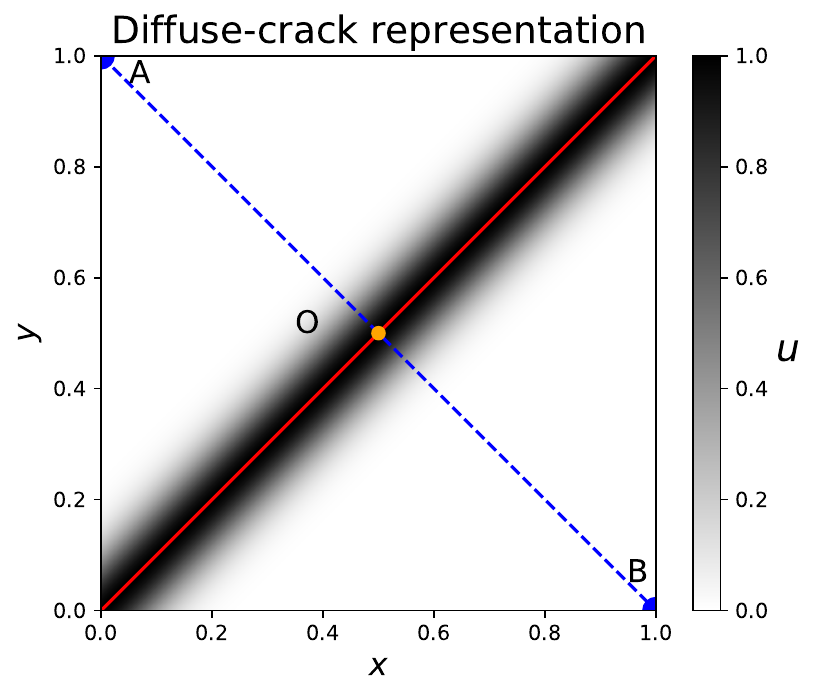} 
    \put( 35,   -6.0){(a)}
    \end{overpic}
    &\quad
    \begin{overpic}[scale=0.45]{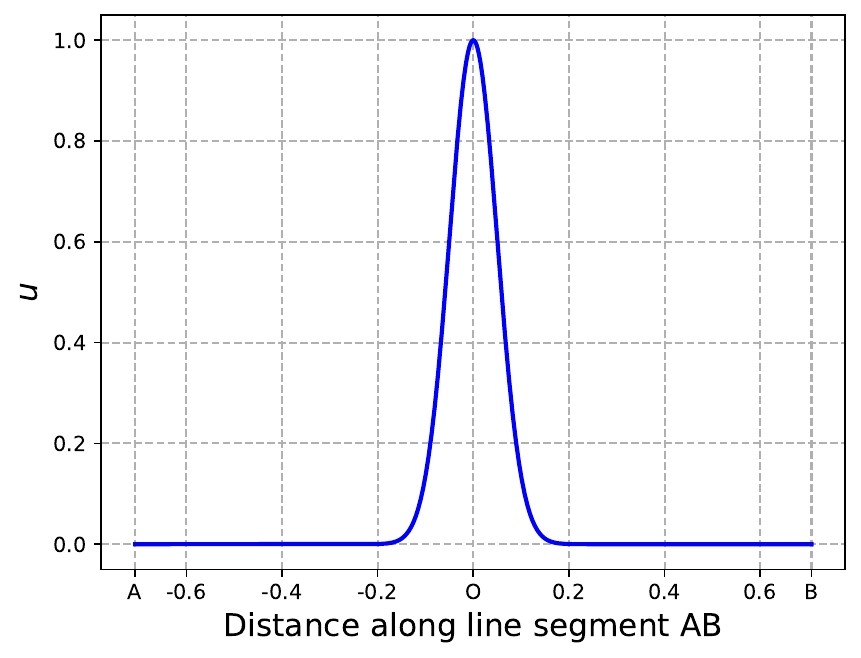} 
    \put( 35,   -6.0){(b)}
    \end{overpic}
  \end{tabular}
\vspace{0.25cm}
\caption{\BLUE{Test Case 2. A manufactured solution to HOPF equation. (a)
  Diffuse representation of a diagonal crack defined by the bisecting
  line $y=x$ (drawn in red color) and (b) PF profile along line
  segment $AB$ for $\epsilon=10^{-2}$.} }
\label{fig:DiffuseCrackRepresentations}
\vspace{0.25cm}
\end{figure}
\begin{figure}[t]
 \centering
  \begin{tabular}{ccc}
    \begin{overpic}[scale=0.3]{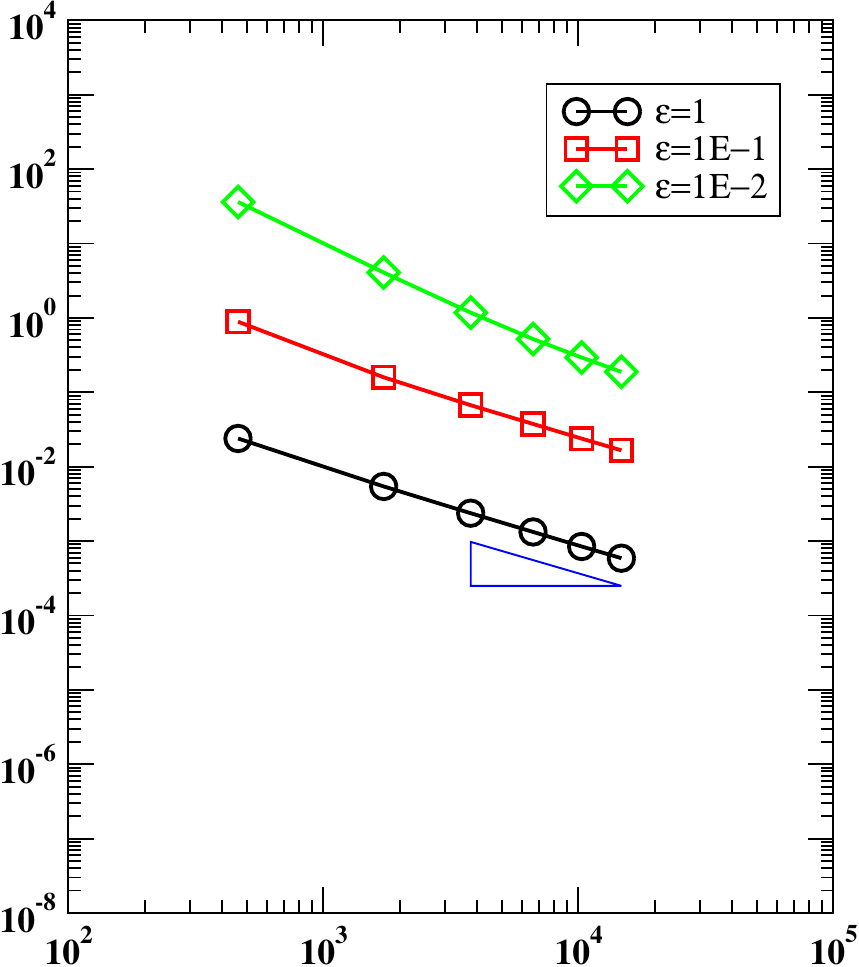} 
      \put( 38,  101.0){\begin{small}$\mathbf{k=2}$\end{small}}
      \put( 35,   -8.0){$\Ndfs$}
      \put( 44,   39.5){\begin{small}$\mathbf{1}$\end{small}}
      \put( -8,   14.0){\begin{sideways}\textbf{$\mathbf{L^2}$ Approximation Error}\end{sideways}}
    \end{overpic}
    &\quad
    \begin{overpic}[scale=0.3]{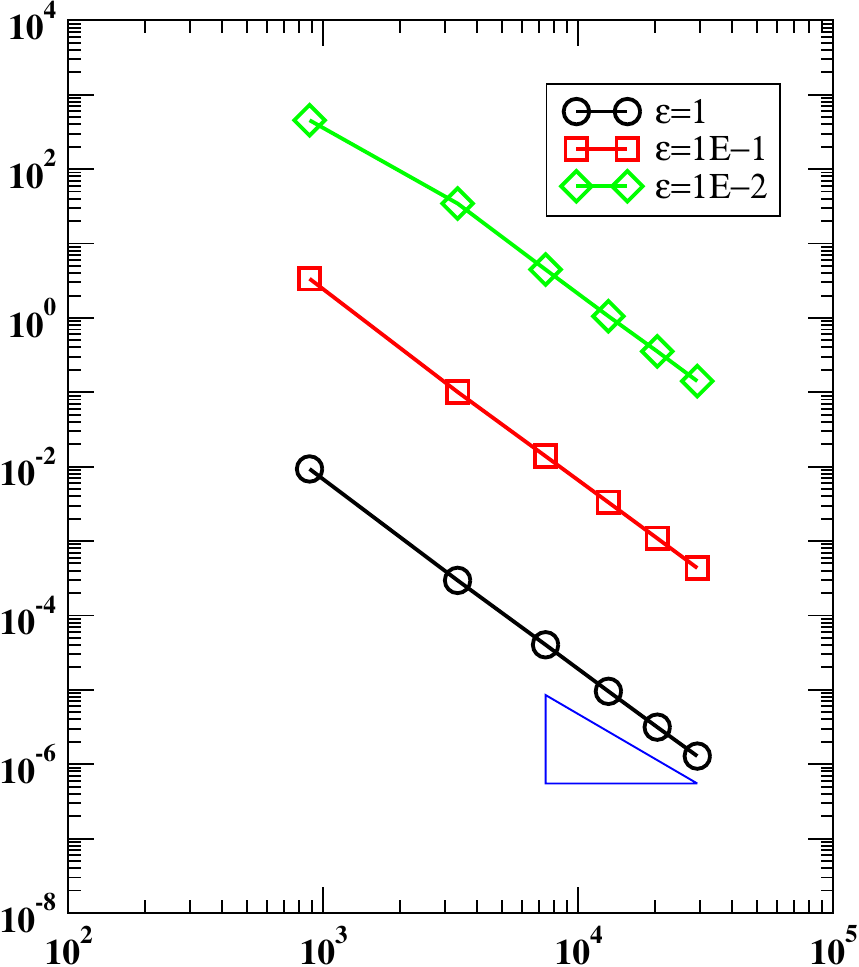} 
      \put( 38,  101.0){\begin{small}$\mathbf{k=3}$\end{small}}
      \put( 35,   -8.0){$\Ndfs$}
      \put( 50,   21.5){\begin{small}$\mathbf{2}$\end{small}}
      \put( -8,   14.0){\begin{sideways}\textbf{$\mathbf{L^2}$ Approximation Error}\end{sideways}}
    \end{overpic}
    &\quad
    \begin{overpic}[scale=0.3]{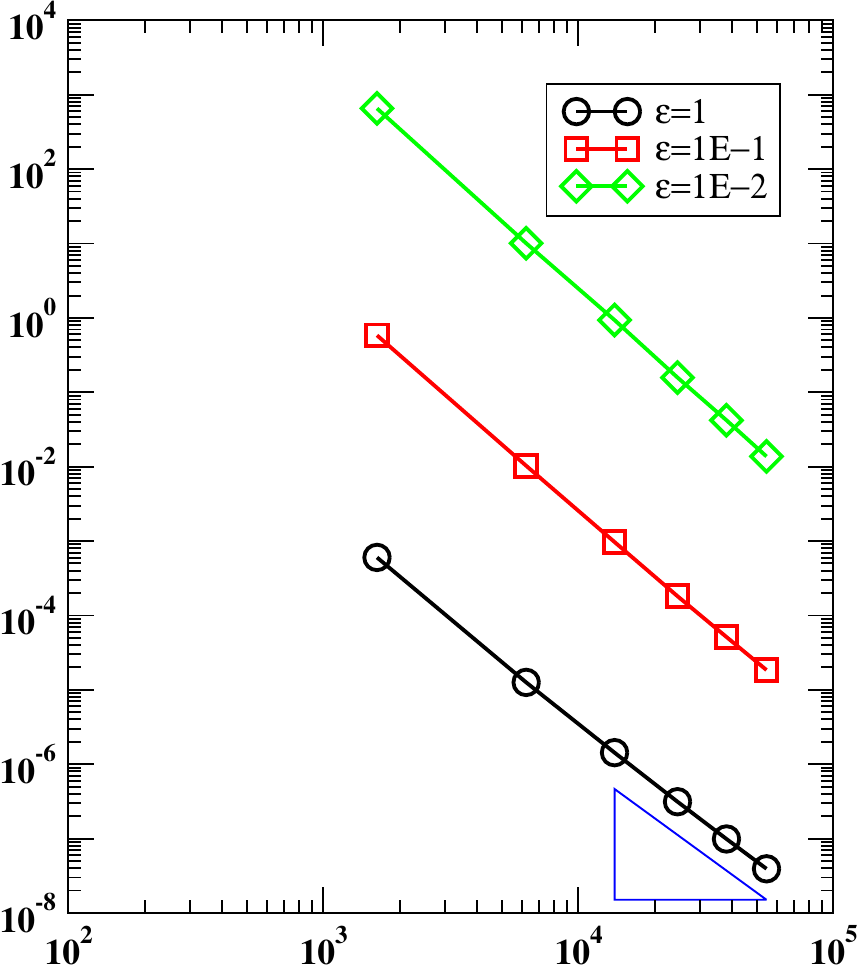} 
      \put( 38,  101.0){\begin{small}$\mathbf{k=4}$\end{small}}
      \put( 35,   -8.0){$\Ndfs$}
      \put( 56.5, 11.0){\begin{small}$\mathbf{\frac52}$\end{small}}
      \put( -8,   14.0){\begin{sideways}\textbf{$\mathbf{L^2}$ Approximation Error}\end{sideways}}
    \end{overpic}
  \end{tabular}
  \vspace{0.25cm}
  \caption{\BLUE{Test Case 2. Error curve measured using the $\LS{2}$
      norm for $\epsilon\in\big\{1,10^{-1},10^{-2}\big\}$.. The
      calculations are carried out on the family of randomized
      quadrilateral meshes using the virtual element method for
      $k=2,3,4$.}}
  \label{fig:TestCase2:Res-Quads-Randomized}
  \vspace{0.25cm}
\end{figure}

\begin{figure}[t]
 \centering
  \begin{tabular}{ccc}
    \begin{overpic}[scale=0.3]{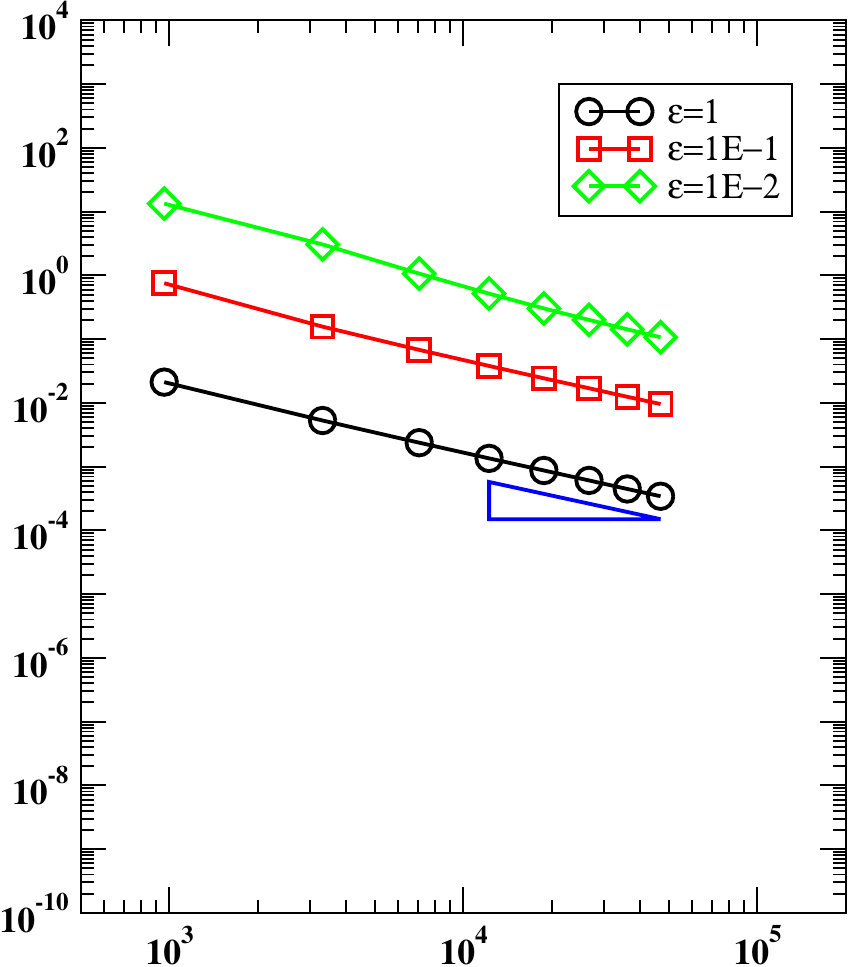} 
      \put( 38,  101.0){\begin{small}$\mathbf{k=2}$\end{small}}
      \put( 35,   -8.0){$\Ndfs$}
      \put( 45.5, 46.5){\begin{small}$\mathbf{1}$\end{small}}
      \put( -8,   14.0){\begin{sideways}\textbf{$\mathbf{L^2}$ Approximation Error}\end{sideways}}
    \end{overpic}
    &\quad
    \begin{overpic}[scale=0.3]{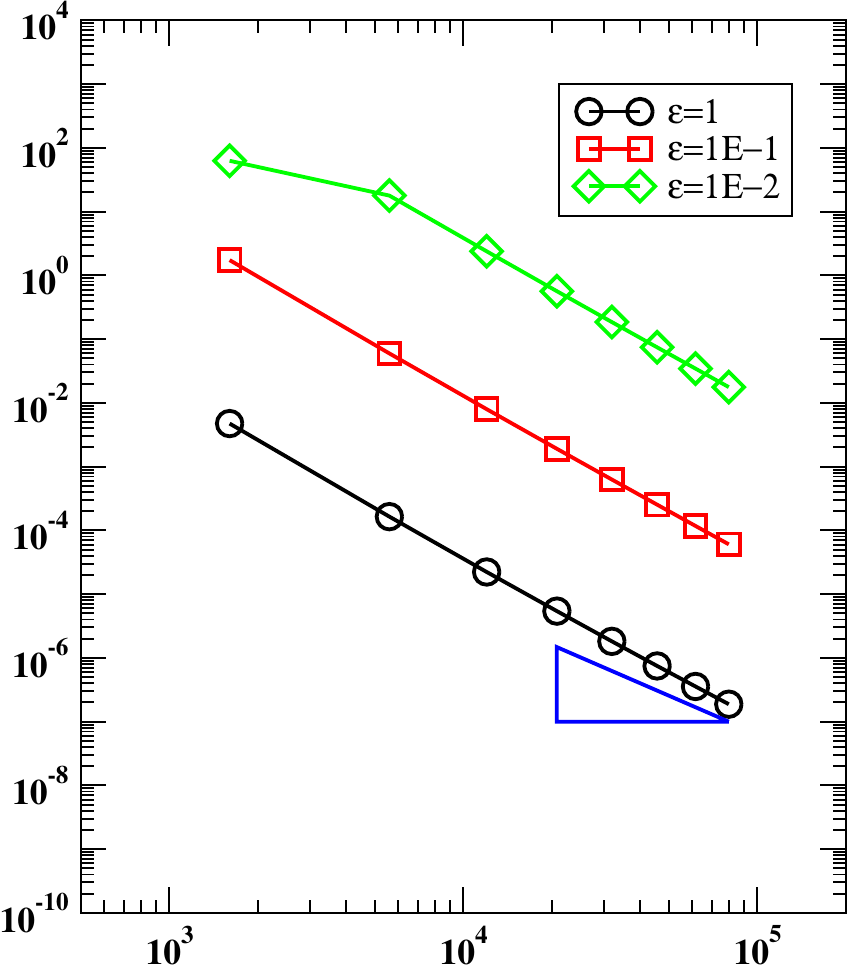} 
      \put( 38,  101.0){\begin{small}$\mathbf{k=3}$\end{small}}
      \put( 35,   -8.0){$\Ndfs$}
      \put( 51,   27.0){\begin{small}$\mathbf{2}$\end{small}}
      \put( -8,   14.0){\begin{sideways}\textbf{$\mathbf{L^2}$ Approximation Error}\end{sideways}}
    \end{overpic}
    &\quad
    \begin{overpic}[scale=0.3]{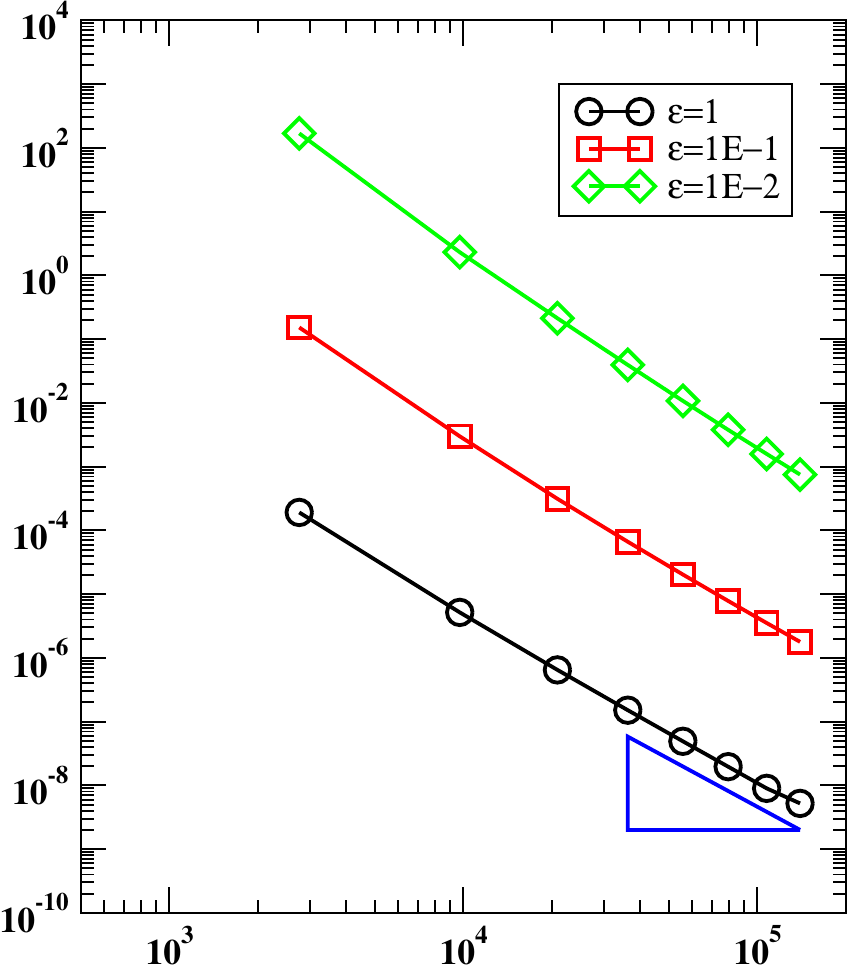} 
      \put( 38,  101.0){\begin{small}$\mathbf{k=4}$\end{small}}
      \put( 35,   -8.0){$\Ndfs$}
      \put( 58.5, 17.0){\begin{small}$\mathbf{\frac52}$\end{small}}
      \put( -8,   14.0){\begin{sideways}\textbf{$\mathbf{L^2}$ Approximation Error}\end{sideways}}
    \end{overpic}
  \end{tabular}
  \vspace{0.25cm}
  \caption{\BLUE{Test Case 2. Error curve measured using the $\LS{2}$
      norm for $\epsilon\in\big\{1,10^{-1},10^{-2}\big\}$. The
      calculations are carried out on the family of smoothly remapped
      hexagonal meshes using the virtual element method for
      $k=2,3,4$.}}
  \label{fig:TestCase2:Res-Hexa-Remapped}
\end{figure}


The nonhomogeneous Dirichlet boundary conditions and the source term
are set on the computational domain
$\overline{\Omega}=[0,1]\times[0,1]$ in accordance with the exact
solutions
\begin{align*}
  \us(x,y) &= \sin(2\pi\xs)\sin(2\pi\ys) + \xs^5 + \ys^5.\\[0.5em]
\end{align*}
We apply the VEM on the four mesh families introduced in the previous
section.

\medskip
We show the log-log plots of the error curves for the four mesh
families in Figures \ref{fig:Res-Quads-Remapped},
\ref{fig:Res-Quads-Randomized}, \ref{fig:Res-Hexa-Remapped}, and
\ref{fig:Res-Octa-NonConvex}.
In every figure, the three panels on top show the error curves versus
the mesh size parameter $\hh$; the three panels on bottom show the
error curves versus the number of degrees of freedom, which is denoted
by $\Ndfs$ on the axis of the plots.
The slopes of these curves reflects the numerical convergence rate,
while the triangles shown near such curves show the theoretical
convergence rate according to the results of Theorems~\ref{ThmL2},
\ref{ThmH1}, and \ref{ThmH2}.
We recall that the convergence rate with respect to $\Ndfs$ is equal
to the convergence rate with respect to $\hh$ divided by two since we
roughly have that $\Ndfs\simeq\hh^{-2}$.
On some of the coarser meshes, the $\LS{2}$-norm errors for
\BLUE{$k=2$} are smaller than those produced by the VEM with
\BLUE{$k=3,4$}.
This phenomenon probably occurs because the numerical method is
working in a pre-asymptotic region.
However, these numerical results are in good agreement with the
theoretical predictions, and they show that the VEM proposed in this
work provides the expected optimal convergence rate in the energy norm
and lower-order norms on all the mesh families considered in this
benchmark test.
Since we can assume that the cost of solving the numerical method is
somehow proportional to the number of degrees of freedom $\Ndfs$, the
bottom plots show that on the finer meshes, the higher-order schemes
are more convenient to achieve a pre-fixed accuracy level.
\RED{We also note a \emph{locking effect} in the convergence rate for
  $\ks=4$ when we measure the error using the $\LTWO$-norm.
  This effect is visible in all the meshes considered in our numerical
  experiments.
  This loss of convergence when the mesh is highly refined is very
  likely related to the increasing ill-conditioning of the linear
  system that we need to solve and will be the subject of further
  investigation.  }

\subsection{\BLUE{Test Case~2. Diagonal crack example: Length-scale sensitivity}}
\label{subsec:2nd-test-case}

\BLUE{To showcase the merits of the VEM in PF fracture problems, a
  manufactured solution to HOPF equation is assumed as follows }
\begin{align*}
  \us(x,y) &= \exp( -(x-y)^2/\epsilon ),
  \qquad
  \epsilon\in\big\{1,10^{-1},10^{-2}\big\}.
\end{align*}
\BLUE{ As illustrated in Figure \ref{fig:DiffuseCrackRepresentations}(a), this function can be conceived as a PF approximation of a
  diagonal crack defined by the bisecting line $y=x$ and drawn in red color in the
  computational domain $\Omega=(0,1)\times(0,1)$.
  The length-scale parameter $\epsilon$ controls the width of the PF
  diffuse crack representation and can be decreased to approximate the
  sharp-crack limit.
  Figure \ref{fig:DiffuseCrackRepresentations}(b) shows the solution $u$ along line segment $AB$ for $\epsilon=10^{-2}$, where $A$ and $B$ are points depicted in \ref{fig:DiffuseCrackRepresentations}(a).
  The non-homogeneous Dirichlet boundary conditions and source term
  shown in equation (1) are set in accordance with this exact
  solution.}

\BLUE{In Figure \ref{fig:TestCase2:Res-Quads-Randomized}, the proposed
  VEM is applied to the aforementioned problem on randomized
  quadrilateral meshes (\ref{fig:mesh:family}(b)) for different values
  of $\epsilon\in\big\{1,10^{-1},10^{-2}\big\}$.
  The convergence curves are shown for the polynomial orders
  $k=2,3,4$, from left to right, respectively. Similar numerical
  behavior is observed in Figure~\ref{fig:TestCase2:Res-Hexa-Remapped}
  for the VEM solution on smoothly remapped hexagonal meshes
  (\ref{fig:mesh:family}(c)).}

These results show that optimal convergence rates are maintained by
the VEM even for very sharp crack profiles, although the error constant
increases, thus needing higher refinements to achieve a given solution
accuracy.



\section{Conclusions and final remark}
\label{sec6:conclusions}

\noindent
In this work, we proposed a $\CS{1}$-regular virtual element method
for the numerical approximation of a fourth-order phase-field
equation.
The virtual element formulation presented in this work employs a
polyomial projection operator that combines the biharmonic and Laplace
differential operators.
We proved the convergence of the method and derived optimal
convergence rates in different norms.
The theoretical convergence results were confirmed by conducting
numerical experiments on a set of four polygonal meshes.
The good approximation properties of the proposed VEM are also
demonstrated in the context of a problem involving HOPF representation of a crack.

Developing analogous discrete spaces and carrying out the \emph{a
priori} analyses for higher dimension model problems will be the topic
of future research.
\BLUE{ Furthermore, coupling the HOPF evolution equation with the
  momentum balance equation forms the basis of HOPF models of dynamic
  fracture.
  With this in mind, in future work, we will be focusing our efforts on
  coupling the VEM that we recently developed for the momentum equation
  in \cite{Antonietti-Manzini-Mazzieri-Mourad-Verani:2021} with the VEM
  presented herein.
  }



\section*{Acknowledgments}
The authors gratefully acknowledge the support of the Laboratory
Directed Research and Development (LDRD) program of Los Alamos
National Laboratory under project number 20220129ER.
Los Alamos National Laboratory is operated by Triad National Security,
LLC, for the National Nuclear Security Administration of
U.S. Department of Energy (Contract No.\ 89233218CNA000001).
This work is registered as the Los Alamos Technical Report
\emph{LA-UR-21-31995}.



\begin{thebibliography}{10}

\bibitem{Adak-Mora-Natarajan-Silgado:2021}
D.~Adak, D.~Mora, S.~Natarajan, and A.~Silgado.
\newblock A virtual element discretization for the time dependent
  {N}avier--{S}tokes equations in stream-function formulation.
\newblock {\em ESAIM: Math. Model. Numer. Anal.}, 55(5):2535---2566, 2021.

\bibitem{Adams-Fournier:2003}
R.~A. Adams and J.~J.~F. Fournier.
\newblock {\em Sobolev spaces}.
\newblock Pure and Applied Mathematics. Academic Press, 2 edition, 2003.

\bibitem{Antonietti-BeiraodaVeiga-Scacchi-Verani:2016}
P.~F. Antonietti, L.~{Beir\~{a}o~da~Veiga}, S.~Scacchi, and M.~Verani.
\newblock A {$C^1$} virtual element method for the {C}ahn-{H}illiard equation
  with polygonal meshes.
\newblock {\em SIAM J. Numer. Anal.}, 54(1):34--56, 2016.

\bibitem{Antonietti-Manzini-Mazzieri-Mourad-Verani:2021}
P.~F. Antonietti, G.~Manzini, I.~Mazzieri, H.M. Mourad, and M.~Verani.
\newblock The arbitrary-order virtual element method for linear elastodynamics
  models: {C}onvergence, stability and dispersion-dissipation analysis.
\newblock {\em Internat. J. Numer. Methods Eng.}, 122:934–971, 2021.

\bibitem{Antonietti-Manzini-Scacchi-Verani:2021}
P.~F. Antonietti, G.~Manzini, S.~Scacchi, and M.~Verani.
\newblock A review on arbitrarily regular conforming virtual element methods
  for second- and higher-order elliptic partial differential equations.
\newblock {\em Mathematical Models and Methods in Applied Sciences},
  31(14):2825 – 2853, 2021.

\bibitem{Antonietti-Manzini-Verani:2019}
P.~F. Antonietti, G.~Manzini, and M.~Verani.
\newblock The conforming virtual element method for polyharmonic problems.
\newblock {\em Comput. Math. Appl.}, 79(7):2021--2034, 2020.

\bibitem{Argyris-Fried-Scharpf:1968}
J.~H. Argyris, I.~Fried, and D.~W. Scharpf.
\newblock The {TUBA} family of plate elements for the matrix displacement
  method.
\newblock {\em Aeronaut. J. R. Aeronaut. Soc.}, 72:701--709, 1968.

\bibitem{AyusodeDios-Lipnikov-Manzini:2016}
B.~{Ayuso~de~Dios}, K.~Lipnikov, and G.~Manzini.
\newblock The non-conforming virtual element method.
\newblock {\em ESAIM: Math. Model. Numer.}, 50(3):879--904, 2016.

\bibitem{Bartezzaghi-Dede-Quarterni:2015}
A.~Bartezzaghi, L.~Ded{\`e}, and A.~Quarteroni.
\newblock Isogeometric analysis of high order partial differential equations on
  surfaces.
\newblock {\em Computer Methods in Applied Mechanics and Engineering},
  295:446--469, 2015.

\bibitem{BeiraodaVeiga-Brezzi-Cangiani-Manzini-Marini-Russo:2013}
L.~{Beir\~{a}o~da~Veiga}, F.~Brezzi, A.~Cangiani, G.~Manzini, L.~D. Marini, and
  A.~Russo.
\newblock Basic principles of virtual element methods.
\newblock {\em Math. Models Methods Appl. Sci.}, 23(1):199--214, 2013.

\bibitem{BeiraodaVeiga-Brezzi-Marini-Russo:2014}
L.~Beir\~{a}o~da Veiga, F.~Brezzi, L.~D. Marini, and A.~Russo.
\newblock The hitchhiker's guide to the virtual element method.
\newblock {\em Math. Models Methods Appl. Sci.}, 24(8):1541--1573, 2014.

\bibitem{BeiraodaVeiga-Manzini:2014}
L.~{Beir\~ao~da~Veiga} and G.~Manzini.
\newblock A virtual element method with arbitrary regularity.
\newblock {\em IMA J. Numer. Anal.}, 34(2):782--799, 2014.

\bibitem{BeiraodaVeiga-Manzini:2015}
L.~{Beir\~{a}o~da~Veiga} and G.~Manzini.
\newblock Residual {\it a posteriori} error estimation for the virtual element
  method for elliptic problems.
\newblock {\em ESAIM: Math. Model. Numer. Anal.}, 49(2):577--599, 2015.

\bibitem{Bell:1969}
K.~Bell.
\newblock A refined triangular plate bending finite element.
\newblock {\em Int. J. Numer. Meth. Eng.}, 1(1):101--122, 1969.

\bibitem{Berrone-Borio-Manzini:2018:CMAME:journal}
S.~Berrone, A.~Borio, and Manzini.
\newblock {SUPG} stabilization for the nonconforming virtual element method for
  advection–diffusion–reaction equations.
\newblock {\em Comput. Methods Appl. Mech. Eng.}, 340:500--529, 2018.

\bibitem{borden_higher-order_2014}
M.~J. Borden, T.~J.~R. Hughes, C.~M. Landis, and C.~V. Verhoosel.
\newblock A higher-order phase-field model for brittle fracture: {F}ormulation
  and analysis within the isogeometric analysis framework.
\newblock {\em Comput. Methods Appl. Mech. Eng.}, 273:100--118, 2014.

\bibitem{borden2012phase}
M.~J. Borden, C.~V. Verhoosel, M.~A. Scott, T.~J.~R. Hughes, and C.~M. Landis.
\newblock A phase-field description of dynamic brittle fracture.
\newblock {\em Comput. Methods Appl. Mech. Eng.}, 217:77--95, 2012.

\bibitem{bourdin_numerical_2000}
B.~Bourdin, G.~A. Francfort, and J-J. Marigo.
\newblock Numerical experiments in revisited brittle fracture.
\newblock {\em J. Mech. Phys. Solids}, 48:797--826, 2000.

\bibitem{Brenner-Scott:2008}
S.~C. Brenner and R.~Scott.
\newblock {\em The mathematical theory of finite element methods}, volume~15.
\newblock Springer Science \& Business Media, 2008.

\bibitem{Brezzi-Marini:2013}
F.~Brezzi and L.~D. Marini.
\newblock Virtual element methods for plate bending problems.
\newblock {\em Comput. Methods Appl. Mech. Eng.}, 253:455--462, 2013.

\bibitem{Chen-Huang-Wei:2022}
C.~Chen, X.~Huang, and H.~Wei.
\newblock \({H^m}\)-conforming virtual elements in arbitrary dimension.
\newblock {\em SIAM Journal on Numerical Analysis}, 60(6):3099--3123, 2022.

\bibitem{Chinosi-Marini:2016}
C.~Chinosi and L.~D. Marini.
\newblock Virtual element method for fourth order problems: {$L^2$}-estimates.
\newblock {\em Comput. Math. Appl.}, 72(8):1959--1967, 2016.

\bibitem{Ciarlet:2002}
P.~G. Ciarlet.
\newblock {\em The Finite Element Method for Elliptic Problems}.
\newblock Classics in Applied Mathematics. Society for Industrial and Applied
  Mathematics, 2002.

\bibitem{dittmann2020phase}
M.~Dittmann, F.~Aldakheel, J.~Schulte, F.~Schmidt, M.~Kr{\"u}ger, P.~Wriggers,
  and C.~Hesch.
\newblock Phase-field modeling of porous-ductile fracture in non-linear
  thermo-elasto-plastic solids.
\newblock {\em Comput. Methods Appl. Mech. Eng.}, 361:112730, 2020.

\bibitem{egger2019discrete}
A.~Egger, U.~Pillai, K.~Agathos, E.~Kakouris, E.~Chatzi, I.~A. Aschroft, and
  S.~P. Triantafyllou.
\newblock Discrete and phase field methods for linear elastic fracture
  mechanics: {A} comparative study and state-of-the-art review.
\newblock {\em Appl. Sci.}, 9(12):2436, 2019.

\bibitem{Elliott-French-Milner:1989}
C.~M. Elliott, D.~A. French, and F.~A. Milner.
\newblock A second-order splitting method for the {C}ahn-{H}illiard equation.
\newblock {\em Numer. Math.}, 54(5):575--590, 1989.

\bibitem{francfort_revisiting_1998}
G.~A. Francfort and J.~J. Marigo.
\newblock Revisiting brittle fracture as an energy minimization problem.
\newblock {\em J. Mech. Phys. Solids}, 46:1319--1342, 1998.

\bibitem{Georgoulis-Houston:2009}
E.~H. Georgoulis and P.~Houston.
\newblock Discontinuous {G}alerkin methods for the biharmonic problem.
\newblock {\em IMA J. Numer. Anal.}, 29(3):573--594, 2009.

\bibitem{Goswami-Anitescu-Rabczuk:2020}
S.~Goswami, C.~Anitescu, and T.~Rabczuk.
\newblock Adaptive fourth-order phase field analysis using deep energy
  minimization.
\newblock {\em Theor. Appl. Fract. Mech.}, 107:102527, 2020.

\bibitem{Grisvard:1985}
P.~Grisvard.
\newblock {\em Elliptic problems in nonsmooth domains}, volume~24 of {\em
  Monographs and Studies in Mathematics}.
\newblock Pitman (Advanced Publishing Program), Boston, MA, 1985.

\bibitem{Grisvard:1992}
P.~Grisvard.
\newblock {\em Singularities in boundary value problems and exact
  controllability of hyperbolic systems}.
\newblock Springer, 1992.

\bibitem{Wu-Lin-Hu:2021}
J.~Hu, T.~Lin, and Q.~Wu.
\newblock A construction of ${C}^r$ conforming finite element spaces in any
  dimension.
\newblock Preprint arXiv:2103.14924, 2021.

\bibitem{Ma-Sun:2020}
R.~Ma and W.C. Sun.
\newblock {FFT}-based solver for higher-order and multi-phase-field fracture
  models applied to strongly anisotropic brittle materials.
\newblock {\em Comput. Methods Appl. Mech. Eng.}, 362:112781, 2020.

\bibitem{miehe_phase_2015}
C.~Miehe, M.~Hofacker, L.~M. Schänzel, and F.~Aldakheel.
\newblock Phase field modeling of fracture in multi-physics problems. {Part}
  {II}. {Coupled} brittle-to-ductile failure criteria and crack propagation in
  thermo-elastic–plastic solids.
\newblock {\em Comput. Methods Appl. Mech. Eng.}, 294:486--522, 2015.

\bibitem{miehe2010thermodynamically}
C.~Miehe, F.~Welschinger, and M.~Hofacker.
\newblock Thermodynamically consistent phase-field models of fracture:
  {V}ariational principles and multi-field {FE} implementations.
\newblock {\em Int. J. Numer. Methods Eng.}, 83(10):1273--1311, 2010.

\bibitem{moutsanidis2018hyperbolic}
G.~Moutsanidis, D.~Kamensky, J.~S. Chen, and Y.~Bazilevs.
\newblock Hyperbolic phase field modeling of brittle fracture: {P}art
  {II}—immersed {IGA--RKPM} coupling for air-blast--structure interaction.
\newblock {\em J. Mech. Phys. Solids}, 121:114--132, 2018.

\bibitem{rahimi2022modeling}
M.~N. Rahimi and G.~Moutsanidis.
\newblock Modeling dynamic brittle fracture in functionally graded materials
  using hyperbolic phase field and smoothed particle hydrodynamics.
\newblock {\em Comput. Methods Appl. Mech. Eng.}, 401:115642, 2022.

\bibitem{rezaei2022anisotropic}
S.~Rezaei, A.~Harandi, T.~Brepols, and S.~Reese.
\newblock An anisotropic cohesive fracture model: Advantages and limitations of
  length-scale insensitive phase-field damage models.
\newblock {\em Eng. Fract. Mech.}, 261:108177, 2022.

\bibitem{Stogner-Carey-Murray:2008}
R.~H. Stogner, G.~F. Carey, and B.~T. Murray.
\newblock Approximation of {C}ahn--{H}illiard diffuse interface models using
  parallel adaptive mesh refinement and coarsening with {$C^1$} elements.
\newblock {\em Int. J. Numer. Methods Eng.}, 76(5):636--661, 2008.

\bibitem{Strang:1972}
G.~Strang.
\newblock Variational crimes in the finite element method.
\newblock In {\em The Mathematical Foundations of the Finite Element Method
  with Applications to Partial Differential Equations}, pages 689--710.
  Elsevier, 1972.

\bibitem{svolos2020thermal}
L.~Svolos, C.~A. Bronkhorst, and H.~Waisman.
\newblock Thermal-conductivity degradation across cracks in coupled
  thermo-mechanical systems modeled by the phase-field fracture method.
\newblock {\em J. Mech. Phys. Solids}, 137:103861, 2020.

\bibitem{svolos2021anisotropic}
L.~Svolos, H.~M. Mourad, C.~A. Bronkhorst, and H.~Waisman.
\newblock Anisotropic thermal-conductivity degradation in the phase-field
  method accounting for crack directionality.
\newblock {\em Eng. Fract. Mech.}, 245:107554, 2021.

\bibitem{Svolos-Mourad-Manzini-Garikipati:2022}
L.~Svolos, H.M. Mourad, G.~Manzini, and K.~Garikipati.
\newblock A fourth-order phase-field fracture model: Formulation and numerical
  solution using a continuous/discontinuous {G}alerkin method.
\newblock {\em J. Mech. Phys. Solids}, 165:104910, 2022.

\bibitem{Vignollet-May-DeBorst-Verhoosel:2014}
J.~Vignollet, S.~May, R.~De~Borst, and C.V. Verhoosel.
\newblock Phase-field models for brittle and cohesive fracture.
\newblock {\em Meccanica}, 49(11):2587--2601, 2014.

\bibitem{wu_chapter_2020}
J.-Y. Wu, V.~P. Nguyen, C.~T. Nguyen, D.~Sutula, S.~Sinaie, and S.~P.~A.
  Bordas.
\newblock Chapter {One} - {Phase}-field modeling of fracture.
\newblock In Stéphane P.~A. Bordas and Daniel~S. Balint, editors, {\em
  Advances in {Applied} {Mechanics}}, volume~53, pages 1--183. Elsevier, 2020.

\bibitem{yan2021new}
C.~Yan, X.~Wang, D.~Huang, and G.~Wang.
\newblock A new {3D} continuous-discontinuous heat conduction model and coupled
  thermomechanical model for simulating the thermal cracking of brittle
  materials.
\newblock {\em Int. J. Solids Struct.}, 229:111123, 2021.

\bibitem{Zhang:2016}
S.~Zhang.
\newblock A family of differentiable finite elements on simplicial grids in
  four space dimensions.
\newblock {\em Math. Numer. Sin.}, 38(3):309--324, 2016.

\end{thebibliography}


\clearpage
\section*{Statements \& Declarations}

\PGRAPH{Funding}: Laboratory Directed Research and Development (LDRD)
program operated at Los Alamos National Laboratory, Grant
N. 20220129ER

\PGRAPH{Competing Interests}: All the Authors have no financial
interests.

\PGRAPH{Author Contributions}: All the Authors equally contributed to
this work.

\PGRAPH{Data Availability}: No new data were specifically generated
for this work. The meshes utilized for this study and reported in the
final appendix are the property of Los Alamos National Laboratory and
can be made available on reasonable request.


\clearpage


\appendix
\section{Mesh data}

For completeness, we report the data corresponding to the four mesh
families that we used in the calculation of Section{sec6:numerical:results}.
The four tables report the following data:
\begin{itemize}
\item $\ilev$: refinement level;
\item $\nR$, $\nF$, $\nV$: number of elements, edges, and vertices;
\item $\hmax$: mesh size parameter;
\item $\ndofs_{k=\ell}$: total number of degrees of freedom for the
  VEM with polynomial degree $\ell=2,3,4$.
\end{itemize}

\begin{table}[ht]
  \begin{center}
    \begin{tabular}{|c|ccc|c|ccc|}
      \cline{1-8}
      $\ilev$ & $\nR$ & $\nF$ & $\nV$ & $\hmax$ & $\ndofs_{k=2}$ & $\ndofs_{k=3}$ & $\ndofs_{k=4}$ \\
      \cline{1-8}
      $0$ &   $25$ &    $60$ &   $36$ &  $3.788\,10^{-1}$ &   $133$ &   $243$  &   $438$ \\
      $1$ &  $100$ &   $220$ &  $121$ &  $2.007\,10^{-1}$ &   $463$ &   $883$  &  $1623$ \\
      $2$ &  $400$ &   $840$ &  $441$ &  $1.035\,10^{-1}$ &  $1723$ &  $3363$  &  $6243$ \\
      $3$ &  $900$ &  $1860$ &  $961$ &  $6.907\,10^{-2}$ &  $3783$ &  $7443$  & $13863$ \\
      $4$ & $1600$ &  $3280$ & $1681$ &  $5.195\,10^{-2}$ &  $6643$ & $13123$  & $24483$ \\
      $5$ & $2500$ &  $5100$ & $2601$ &  $4.155\,10^{-2}$ & $10303$ & $20403$  & $38103$ \\
      $6$ & $3600$ &  $7320$ & $3721$ &  $3.466\,10^{-2}$ & $14763$ & $29283$  & $54723$ \\
      $7$ & $4900$ &  $9940$ & $5041$ &  $2.970\,10^{-2}$ & $20023$ & $39763$  & $74343$ \\
      $8$ & $6400$ & $12960$ & $6561$ &  $2.600\,10^{-2}$ & $26083$ & $51843$  & $96963$ \\
      \cline{1-8}
    \end{tabular}
  \end{center}
  \caption{Mesh data and number of degrees of freedom for the VEM with
    polynomial degree $k=2,3,4$ for the family of smoothly remapped
    quadrilateral meshes.}
  \label{tab:Res-Quads-Remapped}
\end{table}

\begin{table}[ht]
  \begin{center}
    \begin{tabular}{|c|ccc|c|ccc|}
      \cline{1-8}
      $\ilev$ & $\nR$ & $\nF$ & $\nV$ & $\hmax$ & $\ndofs_{k=2}$ & $\ndofs_{k=3}$ & $\ndofs_{k=4}$ \\
      \cline{1-8}
      $0$ &   $25$ &    $60$ &   $36$ & $3.311\,10^{-1}$ &   $133$ &   $243$ &   $438$ \\
      $1$ &  $100$ &   $220$ &  $121$ & $1.865\,10^{-1}$ &   $463$ &   $883$ &  $1623$ \\
      $2$ &  $400$ &   $840$ &  $441$ & $9.412\,10^{-2}$ &  $1723$ &  $3363$ &  $6243$ \\
      $3$ &  $900$ &  $1860$ &  $961$ & $6.130\,10^{-2}$ &  $3783$ &  $7443$ & $13863$ \\
      $4$ & $1600$ &  $3280$ & $1681$ & $4.693\,10^{-2}$ &  $6643$ & $13123$ & $24483$ \\
      $5$ & $2500$ &  $5100$ & $2601$ & $3.808\,10^{-2}$ & $10303$ & $20403$ & $38103$ \\
      $6$ & $3600$ &  $7320$ & $3721$ & $3.167\,10^{-2}$ & $14763$ & $29283$ & $54723$ \\
      $7$ & $4900$ &  $9940$ & $5041$ & $2.751\,10^{-2}$ & $20023$ & $39763$ & $74343$ \\
      $8$ & $6400$ & $12960$ & $6561$ & $2.389\,10^{-2}$ & $26083$ & $51843$ & $96963$ \\
      \cline{1-8}
    \end{tabular}
  \end{center}
  \caption{Mesh data and number of degrees of freedom for the VEM with
    polynomial degree $k=2,3,4$ for the family of randomized
    quadrilateral meshes.}
  \label{tab:Res-Quads-Randomized}
\end{table}

\begin{table}[ht]
  \begin{center}
    \begin{tabular}{|c|ccc|c|ccc|}
      \cline{1-8}
      $\ilev$ & $\nR$ & $\nF$ & $\nV$ & $\hmax$ & $\ndofs_{k=2}$ & $\ndofs_{k=3}$ & $\ndofs_{k=4}$ \\
      \cline{1-8}
      $0$ &   $36$ &   $125$ &    $90$ & $3.279\,10^{-1}$ &   $306$ &   $503$ &    $861$ \\
      $1$ &  $121$ &   $400$ &   $280$ & $1.846\,10^{-1}$ &   $961$ &  $1603$ &   $2766$ \\
      $2$ &  $441$ &  $1400$ &   $960$ & $9.686\,10^{-2}$ &  $3321$ &  $5603$ &   $9726$ \\
      $3$ &  $961$ &  $3000$ &  $2040$ & $6.492\,10^{-2}$ &  $7081$ & $12003$ &  $20886$ \\
      $4$ & $1681$ &  $5200$ &  $3520$ & $4.889\,10^{-2}$ & $12241$ & $20803$ &  $36246$ \\
      $5$ & $2601$ &  $8000$ &  $5400$ & $3.914\,10^{-2}$ & $18801$ & $32003$ &  $55806$ \\
      $6$ & $3721$ & $11400$ &  $7680$ & $3.265\,10^{-2}$ & $26761$ & $45603$ &  $79566$ \\
      $7$ & $5041$ & $15400$ & $10360$ & $2.799\,10^{-2}$ & $36121$ & $61603$ & $107526$ \\
      $8$ & $6561$ & $20000$ & $13440$ & $2.451\,10^{-2}$ & $46881$ & $80003$ & $139686$ \\
      \cline{1-8}
    \end{tabular}
  \end{center}
  \caption{Mesh data and number of degrees of freedom for the VEM with
    polynomial degree $k=2,3,4$ for the family of smoothly remapped
    hexagons.}
  \label{tab:Res-Hexa-Remapped}
\end{table}

\begin{table}[ht]
  \begin{center}
    \begin{tabular}{|c|ccc|c|ccc|}
      \cline{1-8}
      $\ilev$ & $\nR$ & $\nF$ & $\nV$ & $\hmax$ & $\ndofs_{k=2}$ & $\ndofs_{k=3}$ & $\ndofs_{k=4}$ \\
      \cline{1-8}
      $0$ &   $25$ &   $120$ &    $96$ & $2.915\,10^{-1}$ &   $313$ &    $483$ &    $798$ \\
      $1$ &  $100$ &   $440$ &   $341$ & $1.458\,10^{-1}$ &  $1123$ &   $1763$ &   $2943$ \\
      $2$ &  $400$ &  $1680$ &  $1281$ & $7.289\,10^{-2}$ &  $4243$ &   $6723$ &  $11283$ \\
      $3$ &  $900$ &  $3720$ &  $2821$ & $4.859\,10^{-2}$ &  $9363$ &  $14883$ &  $25023$ \\
      $4$ & $1600$ &  $6560$ &  $4961$ & $3.644\,10^{-2}$ & $16483$ &  $26243$ &  $44163$ \\
      $5$ & $2500$ & $10200$ &  $7701$ & $2.915\,10^{-2}$ & $25603$ &  $40803$ &  $68703$ \\
      $6$ & $3600$ & $14640$ & $11041$ & $2.430\,10^{-2}$ & $36723$ &  $58563$ &  $98643$ \\
      $7$ & $4900$ & $19880$ & $14981$ & $2.082\,10^{-2}$ & $49843$ &  $79523$ & $133983$ \\
      $8$ & $6400$ & $25920$ & $19521$ & $1.822\,10^{-2}$ & $64963$ & $103683$ & $174723$ \\
      \cline{1-8}
    \end{tabular}
  \end{center}
    \caption{Mesh data and number of degrees of freedom for the VEM
      with polynomial degree $k=2,3,4$ for the family of nonconvex
      octagons.}
    \label{tab:Res-Octa-NonConvex}
\end{table}


\end{document}